\newtheorem{thm}{Theorem}[section]
\newtheorem{cor}[thm]{Corollary}
\newtheorem{prop}[thm]{Proposition}
\newtheorem{lem}[thm]{Lemma}
\theoremstyle{definition}
\newtheorem{defn}[thm]{Definition}
\theoremstyle{remark}
\newtheorem{rem}[thm]{Remark}
\newcommand{\Z}{\mathbb{Z}\xspace}
\newcommand{\Q}{\mathbb{Q}\xspace}
\DeclareMathOperator{\Spec}{Spec}
\DeclareMathOperator{\res}{res}
\DeclareMathOperator{\Tr}{Tr}
\DeclareMathOperator{\ord}{ord}
\DeclareMathOperator{\Sym}{Sym}
\DeclareMathOperator{\dv}{div}
\DeclareMathOperator{\alb}{alb}
\DeclareMathOperator{\img}{Im}
\DeclareMathOperator{\et}{et}
\DeclareMathOperator{\Cor}{Cor}
\DeclareMathOperator{\Hom}{Hom}
\let\c@equation\c@thm
\numberwithin{equation}{section}
\newcommand{\textlatin }
\title{ON A FILTRATION OF CH$_{0}$ FOR AN ABELIAN VARIETY}
\author{EVANGELIA GAZAKI}
\begin{document}

\maketitle
\begin{small}\textbf{Abstract:} Let $A$ be an abelian variety defined over a field $k$. In this paper we define a filtration $F^{r}$ of the group $CH_{0}(A)$ and prove an isomorphism $\frac{K(k;A,...,A)}{\Sym}\otimes\mathbb{Z}[\frac{1}{r!}]\simeq F^{r}/F^{r+1}\otimes\mathbb{Z}[\frac{1}{r!}]$, where $K(k;A,...,A)$ is the Somekawa K-group attached to  $r$-copies of the abelian variety $A$.\\ In the special case when $k$ is a finite extension of $\mathbb{Q}_{p}$ and $A$ has split multiplicative reduction, we compute the kernel of the map $CH_{0}(A)\otimes\Z[\frac{1}{2}]\rightarrow \rm{Hom}(Br(A),\Q/\Z)\otimes\Z[\frac{1}{2}]$, induced by the pairing $CH_{0}(A)\times Br(A)\rightarrow\mathbb{Q}/\Z$.
\end{small}
\vspace{12pt}

\section{Introduction}
In \cite{Som}, Somekawa introduced the $K$-group $K(k;G_{1},\dots,G_{n})$ attached to semiabelian varieties $G_{1},\dots, G_{n}$ over a field $k$. In the case when $G_{i}=\mathbb{G}_{m}$ for all $i=1,\dots, n$, there is a canonical isomorphism $K(k;\mathbb{G}_{m},\dots,\mathbb{G}_{m})\simeq K_{n}^{M}(k)$  with the usual Milnor $K$-group. In section \ref{K} we recall the definition in the case when all $G_{i}$ are abelian varieties.

If now $A$ is an abelian variety over a field $k$, we can consider the group $K(k;A,\dots,A)$ attached to $r$ copies of $A$. In this paper, we study the relation between this group and the group $CH_{0}(A)$ of zero cycles modulo rational equivalence on $A$. Both those groups are highly incomputable, so our effort is focusing on obtaining some information for $CH_{0}(A)$ by looking at $K(k;A,\dots,A)$ and vice versa.

More specifically, in section 3, we define a descending filtration $F^{r}$ of the group $CH_{0}(A)$, such that the successive quotients $F^{r}/F^{r+1}$ are "almost" isomorphic to $S_{r}(k;A)$, (Theorem (\ref{iso2})), where by $S_{r}(k;A)$ we denote the quotient of $K(k;A,\dots,A)$ by the subgroup generated by elements of the form $\{x_{1},\dots,x_{r}\}_{k'/k}-\{x_{\sigma(1)},\dots,x_{\sigma(r)}\}_{k'/k}$, where $\sigma:\{1,\dots,r\}\rightarrow\{1,\dots,r\}$ is any permutation. \\
The advantage of our result is that it holds over any base field $k$, allowing us to obtain different corollaries by changing the base field. In the case  of an algebraically closed field $k=\overline{k}$, the filtration $F^{r}$ coincides, after  $\otimes\mathbb{Q}$, with the filtration defined by S. Bloch in \cite{Bl} (proposition \ref{algcl}). Recall that the filtration of Bloch, which we denote by $G^{r}$, was defined (in the case $k=\overline{k}$) as follows:
\begin{eqnarray*}&&G^{0}CH_{0}(A)=CH_{0}(A),\\
&&G^{1}CH_{0}(A)=<[a]-[0]:a\in A>,\\
&&G^{2}CH_{0}(A)=<[a+b]-[a]-[b]+[0]:a,\;b\in A>,\\
&&G^{3}CH_{0}(A)=<[a+b+c]-[a+b]-[a+c]-[b+c]+[a]+[b]+[c]-[0]:a,\;b,\;c\in A>,\\
&&G^{r}CH_{0}(A)=<\sum_{j=0}^{r}(-1)^{r-j}\sum_{1\leq\nu_{1}<\dots<\nu_{j}\leq r}[a_{\nu_{1}}+\dots+a_{\nu_{j}}],
a_{i}\in A>.
\end{eqnarray*}  One other case of particular interest is when $k$ is a finite extension of $\mathbb{Q}_{p}$ and $A$ has split semi-ordinary reduction. In this case, we prove that the successive quotients $F^{r}/F^{r+1}$  are divisible, for $r\geq 3$, while $\displaystyle F^{2}/F^{3}\otimes\mathbb{Z}[\frac{1}{2}]$ is the direct sum of a finite group and a divisible group (corollary (\ref{div2})). This result is deduced by our proposition (\ref{phi2}) and a result of Raskind and Spiess (\cite{Ras/Sp}), where they prove that if $A_{1},\dots,A_{n}$ are abelian varieties over $k$, all satisfying the assumptions stated above, then the group $K(k;A_{1},\dots,A_{n})$ is the direct sum of a  divisible and a finite group, for $n\geq 2$ and it is in fact divisible for $n\geq 3$. \\
We note that the group $F^{2}$ turns out to be the Albanese  kernel of $A$, which in the case of a smooth projective variety over a $p$-adic field $k$ is conjectured to be the direct sum of a finite group and a divisible group.
\subsection{Main Results:}
Our first result gives a nontrivial group homomorphism $CH_{0}(A)\longrightarrow S_{r}(k;A)$, for any $r\geq 0$.
\begin{prop}\label{phi2} Let $k$ be a field and $A$ an abelian variety over $k$. For any $r\geq0$ there is a well defined  abelian group homomorphism \begin{eqnarray*}&&\Phi_{r}:CH_{0}(A)\longrightarrow S_{r}(k;A)\\
&&[a]\longrightarrow\{a,a,\dots,a\}_{k(a)/k},
\end{eqnarray*} where $a$ is any closed point of $A$. For $r=0$, $S_{0}(k,A)=\mathbb{Z}$, and we define $\Phi_{0}$ to be the degree map.
\end{prop}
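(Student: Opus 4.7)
The plan is to first define $\Phi_r$ on the free abelian group $Z_0(A)$ of zero-cycles by the formula $[a]\mapsto\{a,\dots,a\}_{k(a)/k}$, viewed initially as a map into $K(k;A,\dots,A)$, and then verify it vanishes on cycles rationally equivalent to zero; composing with the projection $K(k;A,\dots,A)\to S_r(k;A)$ will then yield the desired homomorphism. Since rational equivalence on $A$ is generated by push-forwards of principal divisors from integral curves $V\subset A$, everything reduces to checking that, for the normalization $\pi\colon\widetilde{V}\to V\hookrightarrow A$ of any such $V$ and any $f\in k(\widetilde{V})^*$, the image of $\pi_*\dv(f)$ vanishes in $K(k;A,\dots,A)$.

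The main tool will be the ``Weil reciprocity'' relation built into the Somekawa $K$-group: for any smooth projective curve $C/k$ with function field $K$, any $g\in A(K)$, and any $f\in K^*$ for which $g$ is regular at every closed point of $C$, one has
\[
\sum_{P\in C_{(0)}} v_P(f)\,\{g(P),\dots,g(P)\}_{k(P)/k}=0
\]
in $K(k;A,\dots,A)$. I would apply this with $C=\widetilde{V}$, with $g$ the $K$-point of $A$ corresponding to the composition $\widetilde{V}\to V\hookrightarrow A$ (which is everywhere regular, since $A$ is proper and $\widetilde V$ is a smooth curve), and the given $f$.

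To identify the resulting sum with $\Phi_r(\pi_*\dv(f))$ one must account for the fact that the normalization $\pi$ need not induce isomorphisms on residue fields: in general $\pi_*\dv(f)=\sum_P v_P(f)\,[k(P):k(g(P))]\,[g(P)]$. The discrepancy is absorbed by the projection formula (norm relation) of the Somekawa group applied to the tower $k\subset k(g(P))\subset k(P)$; since each $g(P)$ already descends to a point of $A(k(g(P)))$, the formula yields
\[
\{g(P),\dots,g(P)\}_{k(P)/k}=[k(P):k(g(P))]\,\{g(P),\dots,g(P)\}_{k(g(P))/k}.
\]
Substituting into the reciprocity relation gives exactly $\Phi_r(\pi_*\dv(f))=0$.

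The argument is essentially a direct unwinding of the two defining families of relations in the Somekawa group; the only delicate point I expect is the residue-field bookkeeping in the last step, which the projection formula dispatches cleanly. Finally, note that $\{a,\dots,a\}_{k(a)/k}$ is manifestly invariant under permutations of its entries, so the factorization through $S_r(k;A)$ is automatic, and the degenerate case $r=0$ is covered by the stated degree-map convention.
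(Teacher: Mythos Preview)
Your proposal is correct and follows essentially the same route as the paper's proof: both reduce to push-forwards of principal divisors from the normalization of a curve in $A$, use the projection formula (relation~(\ref{K-1})) to absorb the residue-field degree $[k(P):k(g(P))]$, identify the specialization $s_P(g)$ with the restriction of the image closed point, and conclude via the Weil-reciprocity relation~(\ref{K-2}). The only cosmetic difference is the order of presentation---the paper starts from $\phi_r(p_\star(\dv f))$ and manipulates toward the reciprocity relation, whereas you start from the reciprocity relation and manipulate toward $\Phi_r(\pi_\star(\dv f))$.
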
 Our next step is to define the filtration $F^{r}$ of $CH_{0}(A)$ by
$F^{r}CH_{0}(A)=\bigcap_{j=0}^{r-1}\ker\Phi_{j}$, for $r\geq 1$. In particular, $F^{0}CH_{0}(A)=CH_{0}(A)$ and $F^{1}CH_{0}(A)$ is the subgroup of degree zero cycles.\\ The next result gives a homomorphism in the reverse direction as follows:
\begin{prop}\label{psi2} Let $r\geq 0$ be an integer. There is a well defined abelian group homomorphism
\begin{eqnarray*}&&\Psi_{r}:S_{r}(k;A)\longrightarrow \frac{F^{r}CH_{0}(A)}{F^{r+1}CH_{0}(A)}\\
&&\{a_{1},\dots,a_{r}\}_{k'/k}\longrightarrow\sum_{j=0}^{r}(-1)^{r-j}\Tr_{k'/k}(\sum_{1\leq\nu_{1}<\dots<\nu_{j}\leq r}[a_{\nu_{1}}+\dots+a_{\nu_{j}}]_{k'}),
\end{eqnarray*} where the summand corresponding to $j=0$  is $(-1)^{r}\Tr_{k'/k}([0]_{k'})$. Moreover, the homomorphisms $\Psi_{r}$ satisfy the property, $\Phi_{r}\circ\Psi_{r}=\cdot r!$ on $S_{r}(k;A)$.
\end{prop}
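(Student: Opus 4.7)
The plan is to reformulate the formula as an iterated discrete derivative, which makes the algebra mechanical. Setting $f(a) = [a]_{k'}$ and $\Delta_x f(y) := f(y+x) - f(y)$, one checks directly that the inner sum $\sum_{S\subset\{1,\dots,r\}} (-1)^{r-|S|}[a_S]_{k'}$ (with $a_S = \sum_{i\in S} a_i$) is precisely $(\Delta_{a_1}\cdots\Delta_{a_r} f)(0)$. I will first define $\widetilde{\Psi}_r$ by this formula on formal tuples $(a_1,\dots,a_r; k'/k)$, then show its image lies in $F^r CH_0(A)$, then descend through the defining relations of $S_r(k;A)$, and finally verify $\Phi_r\circ\Psi_r = r!$.

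To show the image lies in $F^r CH_0(A)$, I fix $0 \le j < r$ and compute $\Phi_j\circ\widetilde{\Psi}_r$. Using $\Phi_j(\Tr_{k'/k}[a_S]_{k'}) = \{a_S,\dots,a_S\}_{k'/k}$ with $j$ slots together with multilinearity of the Somekawa symbol in each slot, this expands as $\sum_{g:\{1,\dots,j\}\to S} \{a_{g(1)},\dots,a_{g(j)}\}_{k'/k}$. After exchanging the order of summation, the coefficient of a fixed symbol $\{a_{g(1)},\dots,a_{g(j)}\}_{k'/k}$ with $m := |\img(g)|$ becomes $\sum_{s=m}^r (-1)^{r-s}\binom{r-m}{s-m} = (-1)^{r-m}(1-1)^{r-m}$, which vanishes because $m \le j < r$. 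So $\widetilde{\Psi}_r(a_1,\dots,a_r; k'/k)$ lies in $F^r CH_0(A)$.

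Next, to descend to $S_r(k;A)$, I check three relations. Symmetry under permutation of $(a_1,\dots,a_r)$ is automatic since the formula sums over subsets. For Weil reciprocity, given $g_1,\dots,g_r : C \to A$ and $h \in k(C)^*$, I argue subset by subset: $\sum_v \ord_v(h)\,\Tr_{k(v)/k}[g_S(v)] = (g_S)_*\dv(h) = 0$ in $CH_0(A)$, since the morphism $g_S = \sum_{i\in S} g_i : C \to A$ annihilates principal divisors modulo rational equivalence. For bilinearity in each slot, the operator identity $\Delta_{a+a'} = \Delta_a + \Delta_{a'} + \Delta_a\Delta_{a'}$ shows that the failure of additivity in the first slot of $\widetilde{\Psi}_r$ equals (up to sign) $\widetilde{\Psi}_{r+1}(a,a',a_2,\dots,a_r; k'/k)$, which by the previous paragraph already lies in $F^{r+1}CH_0(A)$ and hence is killed in the target quotient.

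Finally, the identity $\Phi_r\circ\Psi_r = r!$ follows from the same inclusion-exclusion but with $j = r$: now only bijections $g:\{1,\dots,r\}\to\{1,\dots,r\}$ contribute (each with coefficient $1$), and under the symmetry relation defining $S_r(k;A)$ each permuted symbol equals $\{a_1,\dots,a_r\}_{k'/k}$, yielding $r!$ copies. The main obstacle in this plan is the additivity step: the identification of the "failure" with an honest $\widetilde{\Psi}_{r+1}$-element must be done before $\Psi_{r+1}$ is known to descend, so it is essential that the difference-operator calculus be carried out at the level of the naive formula $\widetilde{\Psi}$ — this avoids any induction circularity, since the "image in $F^{r+1}$" step is itself just the naive inclusion-exclusion above.
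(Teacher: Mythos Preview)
Your approach is clean and the difference-operator packaging makes the combinatorics transparent; the image-in-$F^r$ argument and the $\Phi_r\circ\Psi_r=r!$ computation are correct. The bilinearity step via $\Delta_{a+a'}=\Delta_a+\Delta_{a'}+\Delta_a\Delta_{a'}$ and the subset-by-subset handling of Weil reciprocity are essentially the same ideas as in the paper, expressed more uniformly.

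There is, however, a genuine gap: you descend through only three relations (symmetry, Weil reciprocity, multilinearity), but the Somekawa $K$-group is defined by \emph{two} families of relations, and the first --- the projection formula
\[
\{a_1,\dots,\Tr_{L/E}(a_i),\dots,a_r\}_{E/k}=\{\res_{L/E}(a_1),\dots,a_i,\dots,\res_{L/E}(a_r)\}_{L/k}
\]
for a tower $L\supset E\supset k$ --- is never addressed. Your difference-operator formalism lives inside a single $k'$, so it does not automatically relate the two $\widetilde\Psi_r$-values computed over $E$ and over $L$; this relation must be checked separately. The paper handles it by the following trick, which you have all the ingredients for: once you know $\Phi_r\circ\widetilde\Psi_r$ sends $(a_1,\dots,a_r;k'/k)$ to $r!\{a_1,\dots,a_r\}_{k'/k}$ and that $\Phi_r$ is \emph{injective} on $F^r/F^{r+1}$ (by definition of $F^{r+1}$), the projection-formula relation in $K_r(k;A)$ forces the two $\widetilde\Psi_r$-values to agree in the quotient. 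So you should move the $\Phi_r\circ\Psi_r=r!$ computation \emph{before} the descent step and then invoke injectivity of $\Phi_r$ on $F^r/F^{r+1}$ to kill relation~(1). A minor related point: your equality $\Phi_j(\Tr_{k'/k}[a_S]_{k'})=\{a_S,\dots,a_S\}_{k'/k}$ is not the literal definition of $\Phi_j$ (which is stated on closed points, not on traces of $k'$-points); it needs the compatibility $\Phi_j\circ\Tr_{k'/k}=\Tr_{k'/k}\circ\Phi_j^{k'}$, which the paper verifies explicitly.
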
  In particular we see that after applying the functor $\displaystyle\otimes\mathbb{Z}[\frac{1}{r!}]$, the morphisms $\Phi_{r}$ and $\Psi_{r}$ induce the following isomorphisms.
\begin{thm}\label{iso2} Let $k$ be a field and $A$ an abelian variety over $k$. For the filtration $F^{r}CH_{0}(A)$ defined above, there are canonical isomorphisms of abelian groups:
$$\Phi_{r}:\mathbb{Z}[\frac{1}{r!}]\otimes\frac{F^{r}}{F^{r+1}}\stackrel{\simeq}{\longrightarrow}\mathbb{Z}[\frac{1}{r!}]
\otimes S_{r}(k;A), \;r\geq 0,$$ with $\displaystyle\Phi_{r}^{-1}=\frac{1}{r!}\Psi_{r}$. Moreover, the group $F^{2}CH_{0}(A)$ is precisely the Albanese kernel of $A$.
\end{thm}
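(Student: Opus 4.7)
My plan is to read off the theorem almost mechanically from Propositions~\ref{phi2} and~\ref{psi2}, together with the very definition of $F^{r}$.

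First, I would verify that $\Phi_{r}$ descends to a well-defined, injective map on $F^{r}/F^{r+1}$. Since $F^{r+1}=\bigcap_{j=0}^{r}\ker\Phi_{j}$ by definition, $\Phi_{r}$ vanishes on $F^{r+1}$ and its restriction to $F^{r}$ factors through the quotient. For injectivity: if $\alpha\in F^{r}$ satisfies $\Phi_{r}(\alpha)=0$, then since $\alpha\in F^{r}\subseteq\ker\Phi_{j}$ for every $j<r$, one has $\alpha\in\bigcap_{j=0}^{r}\ker\Phi_{j}=F^{r+1}$. Injectivity is then preserved after tensoring with $\mathbb{Z}[\tfrac{1}{r!}]$ by flatness.

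Next, by Proposition~\ref{psi2}, the map $\Psi_{r}$ lands in $F^{r}/F^{r+1}$ and satisfies $\Phi_{r}\circ\Psi_{r}=r!\cdot\mathrm{id}_{S_{r}(k;A)}$. Tensoring with $\mathbb{Z}[\tfrac{1}{r!}]$, this exhibits $\tfrac{1}{r!}\Psi_{r}$ as a right inverse of $\Phi_{r}$; combined with the injectivity above, it follows that $\Phi_{r}$ is a bijection on $F^{r}/F^{r+1}\otimes\mathbb{Z}[\tfrac{1}{r!}]$, and any one-sided inverse of a bijection is automatically its two-sided inverse. Hence $\Phi_{r}^{-1}=\tfrac{1}{r!}\Psi_{r}$, as asserted.

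For the final claim that $F^{2}CH_{0}(A)$ is the Albanese kernel, I would identify $S_{1}(k;A)=K(k;A)$ with $A(k)$; under this identification, $\Phi_{1}([a])=\{a\}_{k(a)/k}$ corresponds to $\Tr_{k(a)/k}(a)\in A(k)$, i.e.\ to the Albanese map $\alb$. Since $F^{1}CH_{0}(A)=\ker\Phi_{0}=\ker(\deg)$ consists of degree-zero cycles, one obtains $F^{2}=F^{1}\cap\ker\Phi_{1}=\ker(\alb|_{F^{1}})$, the Albanese kernel of $A$.

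Since Propositions~\ref{phi2} and~\ref{psi2} carry all the substantive work, this theorem poses no genuine obstacle; the only point worth emphasizing is that once $\Phi_{r}$ is known to be a bijection after inverting $r!$, the equality $\Phi_{r}^{-1}=\tfrac{1}{r!}\Psi_{r}$ is automatic and needs no separate verification of $\tfrac{1}{r!}\Psi_{r}\circ\Phi_{r}=\mathrm{id}$.
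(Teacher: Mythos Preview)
Your proposal is correct and follows essentially the same approach as the paper: both arguments use the definition of $F^{r+1}$ to get injectivity of $\Phi_{r}$ on $F^{r}/F^{r+1}$, then use $\Phi_{r}\circ\Psi_{r}=r!$ to conclude surjectivity after inverting $r!$, and finally identify $\Phi_{1}$ with the Albanese map via the isomorphism $K_{1}(k;A)\simeq A(k)$ (which the paper records as Corollary~\ref{S1}). The paper packages the surjectivity step as ``the cokernel is $r!$-torsion'' rather than ``$\tfrac{1}{r!}\Psi_{r}$ is a right inverse,'' but this is the same content.
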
  As we will see in section 3, Theorem \ref{iso2} can be easily deduced by the two previous propositions.
\subsection{Corollaries} In sections 4 and 5 we obtain various corollaries and properties of the filtration $F^{r}$. After treating the case of an algebraically closed field $k$, in the last part of section \ref{4}, we describe a recursive algorithm to compute generators of the group $\displaystyle F^{r}\otimes\mathbb{Z}[\frac{1}{(r-1)!}]$, for $r\geq 1$.  \\
In section 5, we use the Somekawa map $s_{n}$ (see section 6 for a definition), to obtain a cycle map to Galois cohomology $$\frac{F^{r}/F^{r+1}}{n}\longrightarrow H^{r}(k,\bigwedge^{r}A[n]),$$ where $n$ is any integer invertible in $k$.
\subsection{The p-adic Case}
In our last section, we obtain some results when the base field $k$ is a finite extension of $\mathbb{Q}_{p}$. Using a result of W. Raskind and M. Spiess, \cite{Ras/Sp}, we obtain the following corollary.
\begin{cor}\label{div2} Let $A$  be an abelian variety over a $p$-adic field $k$ having split semi-ordinary reduction. Then for the filtration defined above, it holds:
\begin{enumerate}\item For $r\geq 3$, the groups $F^{r}/F^{r+1}$ are divisible.
\item The group $\displaystyle F^{2}/F^{3}\otimes\mathbb{Z}[\frac{1}{2}]$ is the direct sum of a divisible group and a finite group.
\end{enumerate}
\end{cor}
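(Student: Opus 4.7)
The plan is to reduce the divisibility analysis of the graded pieces $F^{r}/F^{r+1}$ to the analogous analysis of the symmetrized Somekawa groups $S_{r}(k;A)$, using Theorem \ref{iso2} to transport the structural properties supplied by the Raskind--Spiess theorem.

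The first step is to observe that $S_{r}(k;A)$ is by construction a quotient of the Somekawa K-group $K(k;A,\dots,A)$ by the symmetrization relations. Divisibility is preserved under quotients, and because divisible abelian groups are injective in $\mathrm{Ab}$, any extension by a divisible group splits; in particular, a quotient of a direct sum of a divisible and a finite group is again of that form. Combined with the Raskind--Spiess result, which asserts that $K(k;A,\dots,A)$ is divisible for $r\geq 3$ and divisible-plus-finite for $r=2$, this shows that $S_{r}(k;A)$ has the same structural description. For part (2), Theorem \ref{iso2} with $r=2$ then yields $F^{2}/F^{3}\otimes\mathbb{Z}[\tfrac{1}{2}]\simeq S_{2}(k;A)\otimes\mathbb{Z}[\tfrac{1}{2}]$; localization preserves divisibility, and tensoring a finite group with $\mathbb{Z}[\tfrac{1}{2}]$ again produces a finite group, so (2) follows at once.

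For part (1), Theorem \ref{iso2} only gives $F^{r}/F^{r+1}\otimes\mathbb{Z}[\tfrac{1}{r!}]\simeq S_{r}(k;A)\otimes\mathbb{Z}[\tfrac{1}{r!}]$, which by itself yields divisibility of $F^{r}/F^{r+1}$ at primes $p>r$ only. The main obstacle is therefore upgrading this to \emph{integral} divisibility, i.e.\ divisibility at the primes $p\leq r$. To handle these primes I would exploit the integral map $\Psi_{r}\colon S_{r}(k;A)\to F^{r}/F^{r+1}$ of Proposition \ref{psi2}: its image is a divisible subgroup of $F^{r}/F^{r+1}$, hence a direct summand by the injectivity of divisible abelian groups, so that $F^{r}/F^{r+1}\simeq \Psi_{r}(S_{r}(k;A))\oplus C$ with $C$ annihilated by a power of $r!$ (since $\Psi_{r}$ becomes an isomorphism after $\otimes\mathbb{Z}[\tfrac{1}{r!}]$). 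The remaining point---showing $C=0$, equivalently that $\Psi_{r}$ is integrally surjective---I expect to follow from an explicit generation statement for $F^{r}$ modulo $F^{r+1}$ in terms of Bloch-type cycles of the form $\sum_{j=0}^{r}(-1)^{r-j}\sum_{\nu_{1}<\dots<\nu_{j}}[a_{\nu_{1}}+\dots+a_{\nu_{j}}]$, each of which lies manifestly in the image of $\Psi_{r}$; verifying this explicit generation is the crux of the argument.
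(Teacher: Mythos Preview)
Your treatment of part (2) is fine and matches the paper. The gap is in part (1): you never close the argument, and the route you propose---integral surjectivity of $\Psi_{r}$ via an explicit Bloch-type generation of $F^{r}$ modulo $F^{r+1}$---is both unproven here and not established integrally anywhere in the paper (Proposition~\ref{gen2} only gives such generators after inverting $(r-1)!$). So as written, (1) is incomplete.

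The point you are missing is that the relevant integral input is on the side of $\Phi_{r}$, not $\Psi_{r}$. By the very definition of the filtration, $F^{r+1}=\ker(\Phi_{r}|_{F^{r}})$, so the map
\[
\Phi_{r}\colon F^{r}/F^{r+1}\longrightarrow S_{r}(k;A)
\]
is injective \emph{before} any localization. Moreover, Proposition~\ref{psi2} gives $\Phi_{r}\circ\Psi_{r}=r!$ on $S_{r}(k;A)$, so $\mathrm{Im}(\Phi_{r})\supset r!\,S_{r}(k;A)$ integrally. Now for $r\geq 3$, the Raskind--Spiess theorem makes $K_{r}(k;A)$ divisible, hence its quotient $S_{r}(k;A)$ is divisible, and therefore $r!\,S_{r}(k;A)=S_{r}(k;A)$. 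Thus $\Phi_{r}$ is already an integral isomorphism $F^{r}/F^{r+1}\simeq S_{r}(k;A)$, and divisibility of $F^{r}/F^{r+1}$ at \emph{all} primes follows immediately. This is exactly the one-line argument the paper gives; no generation statement or analysis of a complement $C$ is needed.
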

Using these divisibility results, we move on to compute the kernel of the map $$CH_{0}(A)\otimes\Z[\frac{1}{2}]\rightarrow \rm{Hom}(Br(A),\mathbb{Q}/\Z)\otimes\Z[\frac{1}{2}],$$ induced by the Brauer-Manin pairing
$<,>_{A}:CH_{0}(A)\times Br(A)\rightarrow\mathbb{Q}/\Z$, in the special case when $A$ has split multiplicative reduction. (For a definition of the pairing see section 6.2). We obtain the following theorem.
\begin{thm} Let $A$ be an abelian variety over $k$. The subgroup $F^{3}$ is contained in the kernel of the map
$$j:CH_{0}(A)\rightarrow \rm{Hom}(Br(A),\mathbb{Q}/\Z).$$ If moreover $A$ has split multiplicative reduction, then the kernel of the map $$CH_{0}(A)\otimes\Z[\frac{1}{2}]\stackrel{j\otimes\mathbb{Z}[\frac{1}{2}]}{\longrightarrow} \rm{Hom}(Br(A),\mathbb{Q}/\Z)\otimes\Z[\frac{1}{2}]$$ is the subgroup $D$ of $\displaystyle F^{2}\otimes\Z[\frac{1}{2}]$, which contains $\displaystyle F^{3}\otimes\Z[\frac{1}{2}]$ and is such that $\displaystyle D/(F^{3}\otimes\Z[\frac{1}{2}])$ is the maximal divisible subgroup of $\displaystyle F^{2}/F^{3}\otimes\Z[\frac{1}{2}]$.

\end{thm}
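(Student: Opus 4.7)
The plan is to factor $j$ through the Somekawa construction so that the filtration $F^{\bullet}$ is matched with a Hochschild--Serre-type filtration on $Br(A)$, and then to combine Corollary~\ref{div2} with Tate local duality. For $F^{3}\subseteq\ker(j)$, I would decompose $j$ according to the classical three-piece description of $Br(A)$ for an abelian variety coming from the Leray spectral sequence for $A\to\Spec k$: a constant part $Br(k)$, a linear part classified by $H^{1}(k,A^{\vee})$, and a quadratic part whose $n$-torsion is classified by $H^{2}(k,\wedge^{2}A[n])$. The three corresponding components of $j$ are the degree pairing, the Tate pairing composed with the Albanese map $\Phi_{1}$, and the Somekawa cycle map $s_{2}\circ\Phi_{2}:CH_{0}(A)\to\prod_{n}H^{2}(k,\wedge^{2}A[n])$ of section~5. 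The first two components vanish on $F^{2}\supseteq F^{3}$, and the third vanishes on $F^{3}$ because $F^{3}\subseteq\ker\Phi_{2}$ by definition.

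Now assume $A$ has split multiplicative reduction. Since $Br(A)$ is torsion (cohomological Brauer of a smooth variety), $\Hom(Br(A),\Q/\Z)\otimes\Z[\frac{1}{2}]$ is profinite and contains no nonzero divisible subgroup. For the inclusion $D\subseteq\ker(j\otimes\Z[\frac{1}{2}])$, Part~1 lets us factor $j\otimes\Z[\frac{1}{2}]$ through $CH_{0}(A)/(F^{3}\otimes\Z[\frac{1}{2}])$, and $D/(F^{3}\otimes\Z[\frac{1}{2}])$ is by definition the maximal divisible subgroup of $F^{2}/F^{3}\otimes\Z[\frac{1}{2}]$, hence maps to zero in the profinite target.

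For $\ker(j\otimes\Z[\frac{1}{2}])\subseteq D$, I would walk down the filtration. Any $\alpha$ in the kernel has $\deg(\alpha)=0$ in $\Z[\frac{1}{2}]$ (from the constant-piece pairing with $Br(k)\simeq\Q/\Z$), so $\alpha\in F^{1}\otimes\Z[\frac{1}{2}]$. The linear-piece component of $j$ coincides, via $\Phi_{1}$ and Theorem~\ref{iso2}, with Tate local duality $A(k)\times H^{1}(k,A^{\vee})\to\Q/\Z$, whose kernel is the maximal divisible subgroup of $A(k)\otimes\Z[\frac{1}{2}]$; under split multiplicative reduction the identity component of the N{\'e}ron model of $A$ is a split torus, so $A(k)\otimes\Z[\frac{1}{2}]$ decomposes as a finite group plus a $\mathbb{Z}_{p}$-module with trivial maximal divisible subgroup, and hence $\alpha\in F^{2}\otimes\Z[\frac{1}{2}]$. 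Finally, Corollary~\ref{div2}(2) splits $F^{2}/F^{3}\otimes\Z[\frac{1}{2}]=G_{\mathrm{div}}\oplus G_{\mathrm{fin}}$, and the quadratic-piece component of $j$ is non-degenerate on $G_{\mathrm{fin}}$ by the Raskind--Spiess identification, forcing the image of $\alpha$ in $F^{2}/F^{3}\otimes\Z[\frac{1}{2}]$ to lie in $G_{\mathrm{div}}$, i.e., $\alpha\in D$.

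The main obstacle is the last step --- establishing non-degeneracy of the quadratic-piece pairing on $G_{\mathrm{fin}}$. This requires translating the Raskind--Spiess computation of $K(k;A,A)$ into an explicit perfect pairing with the image of $s_{2}$ in $H^{2}(k,\wedge^{2}A[n])$ and matching it with the corresponding piece of $Br(A)[n]$ from the Hochschild--Serre decomposition. It is precisely here that the stronger ``split multiplicative'' (rather than merely ``split semi-ordinary'') reduction hypothesis of Corollary~\ref{div2} becomes essential.
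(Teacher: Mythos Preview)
Your overall architecture matches the paper's: both arguments walk down the Hochschild--Serre filtration of $Br(A)$, pairing the degree/Albanese/quadratic pieces against $F^{0}/F^{1}$, $F^{1}/F^{2}$, $F^{2}/F^{3}$ respectively. Two points of comparison are worth noting. First, the paper routes the entire discussion through the \'etale cycle map $\rho_{A,n}:CH_{0}(A)/n\to H^{2d}(A,\mathbb{Z}/n(d))$, proving (Lemma~\ref{ya}, Corollary~\ref{kernel}) that $\ker(j\bmod n)=\ker\rho_{A,n}$ and then analysing $\rho_{A,n}$ against the Hochschild--Serre filtration of $H^{2d}$; this gives a cleaner bookkeeping than working directly with the three-piece decomposition of $Br(A)$. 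Second, for the step $\ker j\subset F^{2}$ the paper uses Tate duality in the form $A(k)\hookrightarrow H^{1}(k,\widehat{A})^{\star}$, valid for \emph{any} abelian variety over a $p$-adic field; your detour through the structure of $A(k)$ under split multiplicative reduction is unnecessary here (and indeed $A(k)$ is always profinite, hence has no divisible part, regardless of reduction type).

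The genuine gap is the one you flagged, but you have misidentified the tool that closes it. The non-degeneracy on $G_{\mathrm{fin}}$ is \emph{not} a consequence of the Raskind--Spiess structure theorem for $K(k;A,A)$; that result only gives the decomposition into divisible plus finite and says nothing about injectivity of the Galois symbol. What the paper uses is Yamazaki's theorem (\cite{Yam}) that for $A$ with split multiplicative reduction the Somekawa map
\[
s_{n}:K_{2}(k;A)/n\longrightarrow H^{2}(k,A[n]\otimes A[n])
\]
is injective. After $\otimes\,\mathbb{Z}[\tfrac{1}{2}]$ this descends to injectivity of $S_{2}(k;A)/n\to H^{2}(k,\wedge^{2}A[n])$, hence of $(F^{2}/F^{3})/n\otimes\mathbb{Z}[\tfrac{1}{2}]\to H^{2d}(A,\mathbb{Z}/n(d))\otimes\mathbb{Z}[\tfrac{1}{2}]$ via Proposition~\ref{rho}, and the identification $\ker\rho_{A,n}=\ker(j\bmod n)$ finishes the argument. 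This Yamazaki input is precisely why the second half of the theorem needs split multiplicative (not merely split semi-ordinary) reduction; without it your proposal has no mechanism to control the quadratic piece.
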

We point out that our result was motivated by a result of  W. Raskind and M. Spiess, who in \cite{Ras/Sp} obtain an isomorphism
$$\mathbb{Z}\oplus\bigoplus_{1\leq\nu\leq d}\bigoplus_{1\leq i_{1}<\dots<i_{\nu}\leq d}K(k;J_{i_{1}},\dots,J_{i_{\nu}})
\simeq CH_{0}(C_{1}\times\dots\times C_{d}),$$ where  $C_{1},\dots,C_{d}$ are smooth, projective, geometrically connected curves over $k$, all having a $k$-rational point, and $J_{j}$ is the Jacobian variety of $C_{j}$. This isomorphism induces a descending filtration of the group $CH_{0}(C_{1}\times\dots\times C_{d})$ in terms of the $K$-groups $K(k;J_{i_{1}},\dots,J_{i_{\nu}})$.\\
Moreover, our results concerning the pairing of $CH_{0}(A)$ with  $Br(A)$ were motivated by a result of T. Yamazaki, who in \cite{Yam} computes the kernel of the map $j:CH_{0}(X)\rightarrow Br(X)^{\star}$, when $X=C_{1}\times\dots\times C_{d}$ is a product of Mumford curves.
\vspace{4pt}
\subsection{Notation} If $k$ is a field, we denote by $\overline{k}$ its algebraic closure. If $L\supset k$ is any field extension and  $X$ is a variety over $k$, we denote by $X_{L}=X\times_{k}\Spec L$ its base change to $L$. Moreover, if $a$ is any closed point of $X$, we denote by $k(a)$ its residue field.\\
If $B$ is a discrete abelian group, we denote by $B^{\star}$ the group $\rm{Hom}(B,\mathbb{Q}/\Z)$. If $X$ is a smooth variety, and $\mathcal{F}$ is an abelian sheaf on the \'{e}tale site of $X$, we denote $H^{r}(X,\mathcal{F}),\;r\geq 0$, the \'{e}tale cohomology groups of $X$ with coefficients in $\mathcal{F}$.\\
If now $A$ is an abelian variety over $k$, and $n$ is any integer, we denote by $A[n]=\ker(A\stackrel{n}{\longrightarrow}A)$ the $n$-torsion points of $A$.  Further, we denote by $\widehat{A}$, the dual abelian variety of $A$.

\vspace{13pt}
\section{Review of Definitions}In this section we recall the definition of the Albanese variety, $Alb_{X}$, of a smooth projective variety $X$ over a field $k$, as well as the definition of the Somekawa K-group $K(k;A_{1},...,A_{n})$ attached to abelian varieties.
\subsection{The Albanese map:}
Let $X$ be a smooth projective variety of dimension $d\geq 0$ over a field $k$. We consider the group $CH_{0}(X)$ of zero cycles modulo rational equivalence on $X$. We denote $A_{0}(X)$ the subgroup of $CH_{0}(X)$ of degree zero cycles. For a concise collection of results concerning the groups $CH_{0}(X)$ and $A_{0}(X)$, we refer to \cite{CT1}.\\
We recall that if $X$ is a smooth projective variety having a $k$ rational point $P$, there is a unique abelian variety $Alb_{X}$, called the Albanese variety of $X$, and a unique morphism $\varphi:X\rightarrow Alb_{X}$ taking $P$ to the zero element of $Alb_{X}$, satisfying the universal property: If $f:X\rightarrow B$ is a morphism of $X$ to an abelian variety $B$ taking  $P$ to the zero element of $B$,  then $f$ factors uniquely through $\varphi$. We note that the map $\varphi$ induces a group homomorphism $\alb_{X}:A_{0}(X)\rightarrow Alb_{X}(k)$, not depending on the $k$-rational point $P$, called the albanese map of $X$. For a proof of this statement and more details on $Alb_{X}$ we refer to \cite{Bl2}.\\ In the case of a smooth projective curve $C$ having a $k$-rational point, we know that $alb_{C}$ gives an isomorphism of $Pic^{0}(C)$ with the usual Jacobian of $C$. In higher dimensions, this map is far from being injective. Finally we notice, that if $A$ is an abelian variety, then $A$ is its own Albanese variety.
\subsection{The group $K(k;A_{1},...,A_{n})$:}
\label{K} Here we review the definition of the $K$-group $K(k;A_{1},...,A_{n})$ attached to abelian varieties $A_{1},\dots,A_{n}$, which was first introduced by  Somekawa in \cite{Som}. Notice that if $k'/k$ is any extension of $k$, there is a natural restriction morphism $\res_{k'/k}:A(k)\rightarrow A(k')$, while if $k'\supset k$ is a finite extension, we obtain further a well defined Trace morphism $\Tr_{k'/k}:A(k')\rightarrow A(k)$. We define
$$K(k;A_{1},...,A_{n})=[\bigoplus_{k'/k}A_{1}(k')\otimes...\otimes A_{n}(k')]/R,$$ where the sum extends over all finite extensions $k'\supset k$  and $R$ is the subgroup generated by the following two families of elements:
\begin{enumerate}\item\label{K-1} If $L\supset E\supset k$ are two finite extensions of $k$ and we have points $a_{i}\in A_{i}(L)$, for some $i\in\{1,2,...,n\}$, and $a_{j}\in A_{j}(E)$, for all $j\neq i$, then
$$a_{1}\otimes...\otimes \Tr_{L/E}(a_{i})\otimes...\otimes a_{n}-\res_{L/E}(a_{1})\otimes...\otimes a_{i}\otimes...\otimes \res_{L/E}(a_{n})\in R$$
\item\label{K-2} Let $K/k$ be a function field in one variable over $k$. Let $f\in K^{\times}$ and $x_{i}\in A_{i}(K),\;i=1,...,n$. Then we define
    $$\sum_{v\;\rm{place\;of\;K/k}}\ord_{v}(f)(s_{v}^{1}(x_{1})\otimes...\otimes s_{v}^{n}(x_{n}))\in R,$$ where the sum extends over all places $v$ of $K$ over $k$. If $v$ is such a place, the morphisms $s_{v}^{i}$ are specialization maps $A_{i}(K)\rightarrow A_{i}(k_{v})$, where $k_{v}$ is the residue field of the place $v$, and are defined as follows: Let $K_{v}$ be the completion of $K$ with respect to the valuation $v$ and $\mathcal{O}_{v}$ be its ring of integers. The properness of $A_{i}$  over $k$ yields isomorphisms $A_{i}(K_{v})\simeq A_{i}(\mathcal{O}_{v}),$ for all $i=1,...,n$. Further, we have a natural map $A_{i}(\mathcal{O}_{v})\rightarrow A_{i}(k_{v})$ induced by $\mathcal{O}_{v}\twoheadrightarrow k_{v}$. This gives,
\[ \xymatrix{
& A_{i}(K)\ar[dr]_{s_{v}^{i}} \ar[r]   &   A_{i}(\mathcal{O}_{v})\ar[d]\\
&  & A_{i}(k_{v}), \\
}
\] where the horizontal map is the composition $A_{i}(K)\stackrel{\res}{\longrightarrow}A_{i}(K_{v})
\stackrel{\simeq}{\longrightarrow}A_{i}(\mathcal{O}_{v})$.
\end{enumerate}
\medskip
\textbf{Notation:}\begin{enumerate}\item The elements of $K(k;A_{1},...,A_{n})$ will be from now on denoted as symbols $\{a_{1},...,a_{n}\}_{k'/k}$, for $a_{i}\in A_{i}(k')$.
\item If $A_{1}=A_{2}=\dots=A_{r}$, then we introduce the notation
\begin{eqnarray*}K_{r}(k;A)=K(k;\overbrace{A,\dots,A}^{r}).
\end{eqnarray*}
Furthermore, we consider the group $$S_{r}(k;A)=\frac{K_{r}(k;A)}{<\{x_{1},\dots,x_{r}\}_{k'/k}-\{x_{\sigma(1)},\dots,x_{\sigma(r)}\}_{k'/k}:\sigma\in\sum_{r}>},$$ where $\Sigma_{r}$ is the group of permutations of the set $\{1,\dots,r\}$.
\item If $a$ is a closed point of $A$, we will denote by $[a]$ the class of $a$ in $CH_{0}(A)$.
\end{enumerate}
\medskip
\textbf{Functoriality:} Let $L/k$ be a  finite extension of $k$. Then there is a well defined trace map defined as follows:
\begin{eqnarray*}\Tr_{L/k}:&&K(L;A_{1},...,A_{n})\rightarrow K(k;A_{1},...,A_{n})\\
&&\{a_{1},\dots,a_{n}\}_{E/L}\rightarrow\{a_{1},\dots,a_{n}\}_{E/k}.
\end{eqnarray*} \\
Moreover, if $j:k\hookrightarrow L$ is any field extension, then there is a well defined Restriction map
$$\res_{L/k}:K(k;A_{1},...,A_{n})\rightarrow K(L;A_{1}\times_{k} L,...,A_{n}\times_{k} L).$$ To define it, consider a finite extension $k'/k$  of $k$ and let $\{a_{1},\dots,a_{n}\}_{k'/k}\in K(k;A_{1},...,A_{n})$. We can write $k'\otimes L=\prod_{i=1}^{m}B_{i}$, where $B_{i}$ are Artin local rings over $L$ of length $e_{i}$, for $i=1,\dots,m$. The residue field $L_{i}$ of $B_{i}$ is a finite extension of $L$, for $i=1,\dots,m$ and an extension of $k'$. We define:
$$\res_{L/k}(\{a_{1},\dots,a_{n}\}_{k'/k})=\sum_{i=1}^{m}e_{i}\{\res_{L_{i}/k'}(a_{1}),\dots,
\res_{L_{i}/k'}(a_{n})\}_{L_{i}/L}.$$ Notice that if $L/k$ is a finite extension of $k$ and $\{a_{1},\dots,a_{n}\}_{k'/k}\in K(k;A_{1},...,A_{n})$, then we have:
$$\Tr_{L/k}(\res_{L/k}(\{a_{1},\dots,a_{n}\}_{k'/k}))=(\sum_{i=1}^{m}e_{i})\{a_{1},\dots,a_{n}\}_{k'/k}=[L:k]
\{a_{1},\dots,a_{n}\}_{k'/k}.$$
\\
\textbf{The Somekawa Map:}
If $n>0$ is an integer which is invertible in $k$, then there is a well defined Galois symbol map:
$$s_{n}:\frac{K(k;A_{1},...,A_{r})}{n}\longrightarrow H^{r}(k, A_{1}[n]\otimes\dots\otimes A_{r}[n]),$$ defined using the cup product of Galois cohomology and the Kummer sequence for abelian varieties. A precise definition of the map $s_{n}$ will be reviewed in section 6.
\begin{rem} In \cite{Som}, Somekawa states the conjecture that the the map $s_{n}$ is injective. This has been proved in some cases. We refer to \cite{Mur/Ram} and \cite{Yam} for some examples where the conjecture holds. On the contrary, in
\cite{Sp/Yam}, M.Spiess and T.Yamazaki provided a counterexample, by constructing a torus $T$ over a field $k$ that has the property that the Galois symbol map $K(k;T,T)/n\stackrel{s_{n}}{\longrightarrow} H^{2}(k,T[n]^{\otimes 2})$ fails to be injective.
\end{rem}
\textbf{Convention-Notation:} Let $k$ be any field and $A$  a variety over $k$. If $a$ is a closed point of $A$, then $a$ induces a unique $k(a)$-rational point $\widetilde{a}$ of $A_{k(a)}$, and for the push-forward map $\Tr_{k(a)/k}:CH_{0}(A_{k(a)})\rightarrow CH_{0}(A)$, it holds $\Tr_{k(a)/k}([\widetilde{a}])=[a]$. \\
If now $k'\supset k(a)\supset k$ is a finite extension, then $a$ can be considered by restriction as a $k'$-rational point of $A$. We will denote by $[a]_{k'}$ the class of $[\res_{k'/k(a)}(a)]$ in $CH_{0}(A_{k'})$. Notice that for the push-forward map, $\Tr_{k'/k}:CH_{0}(A_{k'})\rightarrow CH_{0}(A)$,
it holds $\Tr_{k'/k}([a]_{k'})=[k':k(a)]\cdot[a]$. (See \cite{Ful}, section 1.4). The necessity of this remark will become apparent in proposition \ref{Gr}.
\vspace{6pt}
\section{The canonical Isomorphisms}\label{2}
In this section we define a filtration $F^{r}CH_{0}(A)$ of $CH_{0}(A)$ and prove the existence of canonical morphisms $\Phi_{r}:F^{r}/F^{r+1}\rightarrow S_{r}(k;A)$,  and $\Psi_{r}:S_{r}(k;A)\rightarrow F^{r}/F^{r+1}$, for all $r\geq 0$, so that $\Phi_{r}$ and $\Psi_{r}$ become "almost" each other inverses. So as not to exclude $r=0$ from what it follows, we define $S_{0}(k;A)=\mathbb{Z}$.

\begin{prop}\label{phi} Let $k$ be a field and $A$ an abelian variety over $k$. For any $r\geq0$ there is a well defined  abelian group homomorphism \begin{eqnarray*}&&\Phi_{r}:CH_{0}(A)\longrightarrow S_{r}(k;A)\\
&&[a]\longrightarrow\{a,a,\dots,a\}_{k(a)/k}.
\end{eqnarray*}
\end{prop}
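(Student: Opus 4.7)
The plan is to define $\Phi_r$ first on the free abelian group $Z_0(A)$ of zero cycles on $A$ by $\sum n_i[a_i]\mapsto \sum n_i\{a_i,\dots,a_i\}_{k(a_i)/k}$, and then verify that it vanishes on every rational equivalence, so that it descends to $CH_0(A)$. For $r=0$, the formula is to be read as $[a]\mapsto \{\,\}_{k(a)/k}=[k(a):k]$, i.e.\ the degree, which clearly factors through rational equivalence, so from now on assume $r\ge 1$. Since all $r$ entries of the image symbol coincide, its class in $S_r(k;A)$ is automatically invariant under $\Sigma_r$, so no extra check is needed on the target side.

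Rational equivalence on $CH_0(A)$ is generated by push-forwards of principal divisors along integral curves. If $C\subset A$ is a reduced irreducible curve with normalization $\pi\colon \widetilde C\to C\hookrightarrow A$ and $f\in K^\times$ for $K=k(\widetilde C)$, then
$$\pi_{*}(\dv(f))=\sum_{v} \ord_{v}(f)\,[k_v:k(a_v)]\,[a_v]=0\quad\text{in } CH_0(A),$$
where $v$ ranges over closed points of $\widetilde C$, $k_v=k(v)$, and $a_v=\pi(v)\in A$ has residue field $k(a_v)\subseteq k_v$. It therefore suffices to check that
$$\sum_{v} \ord_{v}(f)\,[k_v:k(a_v)]\,\{a_v,\dots,a_v\}_{k(a_v)/k}=0\quad\text{in } S_r(k;A).$$

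This is precisely what the Somekawa relation (K-2) provides. The morphism $\widetilde C\to A$ is a $K$-rational point $x\in A(K)$, and by properness the specialization $s_v(x)\in A(k_v)$ equals $\res_{k_v/k(a_v)}(\widetilde{a_v})$ at every place $v$. Applying (K-2) to $f$ and the tuple $(x,\dots,x)\in A(K)^{r}$ gives
$$\sum_{v} \ord_{v}(f)\,\{s_v(x),\dots,s_v(x)\}_{k_v/k}=0 \quad\text{in } K_r(k;A).$$
A single application of the projection formula (K-1) in the first coordinate, taking $a_1=s_v(x)\in A(k_v)$ and $a_j=\widetilde{a_v}\in A(k(a_v))$ for $j\ne 1$, together with $\Tr_{k_v/k(a_v)}\circ \res_{k_v/k(a_v)}=[k_v:k(a_v)]$ and multilinearity of the symbol, rewrites
$$\{s_v(x),\dots,s_v(x)\}_{k_v/k}=[k_v:k(a_v)]\,\{a_v,\dots,a_v\}_{k(a_v)/k},$$
and substituting into the previous identity yields exactly the required vanishing in $S_r(k;A)\twoheadleftarrow K_r(k;A)$.

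The only substantive point is checking that the multiplicity $[k_v:k(a_v)]$ appearing in the push-forward of cycles along $\pi$ matches the multiplicity produced by moving a symbol from $k_v$ down to $k(a_v)$ via (K-1); once this coincidence is recognized, the argument is mechanical. Morally, the two families of relations (K-1) and (K-2) in Somekawa's definition are exactly the two families of relations one needs so that a pointwise assignment $a\mapsto \{a,\dots,a\}_{k(a)/k}$ on closed points of $A$ extends to a well-defined homomorphism out of $CH_0(A)$.
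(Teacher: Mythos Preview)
Your proof is correct and follows essentially the same route as the paper's: define $\Phi_r$ on zero-cycles, then for a principal divisor pushed forward from the normalization of a curve in $A$, use the projection formula (K-1) together with $\Tr\circ\res=[k_v:k(a_v)]$ to match the push-forward multiplicity, and invoke the Weil-reciprocity relation (K-2) applied to the generic point $x\in A(K)$ with all $r$ entries equal. The only cosmetic difference is that the paper applies (K-1) first and then recognizes the resulting sum as an instance of (K-2), whereas you start from (K-2) and then rewrite each term via (K-1); the content is identical.
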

\begin{proof} For $r=0$ we define $\Phi_{0}:CH_{0}(A)\longrightarrow \mathbb{Z}$ to be the degree map.
Let now $r>0$ be a fixed integer. We define a map $Z_{0}(A)\stackrel{\phi_{r}}{\longrightarrow}S_{r}(k;A)$ first at the level of cycles as follows. Let $a$ be any closed point of $A$ with residue field $k(a)$. Then we define $\phi_{r}(a)=\{a,a,\dots,a\}_{k(a)/k}$. To check that $\phi_{r}$ factors through rational equivalence, let $C\subset A$ be a closed irreducible curve with function field $K=k(C)$ and let $f\in K^{\times}$. Let $\widetilde{C}$ be the normalization of $C$ and let $p$
\[ \xymatrix{
&\widetilde{C}\ar[d] \ar[r]^{p} & A\\
& C\ar@{^{(}->}[ru] \\
}
\] be the canonical map. We need to show $\phi_{r}(p_{\star}(\dv(f))=0$. By the definition of $\phi_{r}$ we obtain: \begin{eqnarray*}&&\phi_{r}(p_{\star}(\dv(f))=\phi_{r}(\sum_{x\in\widetilde{C}}\ord_{x}(f)[k(x):k(p(x))][p(x)])=\\
&&\sum_{x\in\widetilde{C}}\ord_{x}(f)[k(x):k(p(x))]\{p(x),\dots,p(x)\}_{k(p(x))/k}=\\
&&\sum_{x\in\widetilde{C}}\ord_{x}(f)\{[k(x):k(p(x))]p(x), p(x),\dots,p(x)\}_{k(p(x))/k}=\\
&&\sum_{x\in\widetilde{C}}\ord_{x}(f)\{\Tr_{k(x)/k(p(x))}(\res_{k(x)/k(p(x))}(p(x)),p(x),\dots,p(x)\}_{k(p(x))/k}=\\
&&\sum_{x\in\widetilde{C}}\ord_{x}(f)\{\res_{k(x)/k(p(x))}(p(x)),\dots,\res_{k(x)/k(p(x))}(p(x))\}_{k(x)/k}.
\end{eqnarray*} Let $\Spec K\stackrel{\eta}{\hookrightarrow}\widetilde{C}$ be the generic point inclusion and let $x$ be a closed point of $\widetilde{C}$. Let $K_{x}$ be the completion of $K$ at the place $x$ and $\mathcal{O}_{K_{x}}$ its ring of integers. Then  the diagram
\[ \xymatrix{
&\Spec K_{x}\ar[d]^{\eta_{x}} \ar[r]  & \Spec K\ar@{^{(}->}[ld]\\
&\widetilde{C} \\
}
\]
yields a $K_{x}$-rational point $p\eta_{x}$ of $A$. The valuative criterion for properness gives a unique $\mathcal{O}_{K_{x}}$-valued point of $A$,
\[ \xymatrix{
&\Spec K_{x}\ar[d] \ar[r]^{p\eta_{x}}  & A\ar[d]\\
&\Spec \mathcal{O}_{K_{x}}\ar[r]\ar[ru]^{\exists !p_{x}} & \Spec k. \\
}
\] Then, we claim that for  the specialization map $s_{x}$ corresponding to the valuation $x$, it holds $s_{x}(p\eta)=\res_{k(x)/k(p(x))}(p(x))$. To see this, we follow the composition
\begin{eqnarray*} &&A(K)\stackrel{\res}{\longrightarrow} A(K_{x})\stackrel{\simeq}{\longrightarrow} A(\mathcal{O}_{K_{x}})
\stackrel{\res}{\longrightarrow} A(k_{x})\\
&&p\eta\;\;\;\;\longrightarrow p\eta_{x}\;\;\;\;\longrightarrow\;\;\;\;p_{x}\longrightarrow\;\;\;\;
\res_{k(x)/k(p(x))}(p(x)).
\end{eqnarray*} This in turn yields:
\begin{eqnarray*}&&\phi_{r}(p_{\star}(\dv(f))=\sum_{x\in\widetilde{C}}\ord_{x}(f)\{\res_{k(x)/k(p(x))}(p(x)),\dots,
\res_{k(x)/k(p(x))}(p(x))\}_{k(x)/k}=\\
&&\sum_{x\in\widetilde{C}}\ord_{x}(f)\{s_{x}(p\eta),\dots,s_{x}(p\eta)\}_{k(x)/k}=0,
\end{eqnarray*} where the last equality comes from the defining relation (\ref{K-2}) of the K-group $K_{r}(k;A)$. We thus obtain a homomorphism $CH_{0}(A)\stackrel{\Phi_{r}}{\longrightarrow}S_{r}(k;A)$ as desired.
\end{proof}
\medskip
\begin{defn}\label{fil} We define a descending filtration $F^{r}$ of $CH_{0}(A)$ by $F^{r}=\bigcap_{j=0}^{r-1}\ker\Phi_{j},\;r\geq 0$. In particular, $F^{0}CH_{0}(A)=CH_{0}(A)$ and $F^{1}CH_{0}(A)=A_{0}(A)$ is the subgroup of degree zero elements.
\end{defn}
\medskip
\begin{prop}\label{Gr} The filtration $F^{r}CH_{0}(A)$ just defined contains the filtration $G^{r}CH_{0}(A)$ defined as follows:
\begin{eqnarray*}&&G^{0}CH_{0}(A)=CH_{0}(A),\\
&&G^{1}CH_{0}(A)=<\Tr_{k'/k}([a]_{k'}-[0]_{k'}):a\in A(k')>,\\
&&G^{2}CH_{0}(A)=<\Tr_{k'/k}([a+b]_{k'}-[a]_{k'}-[b]_{k'}+[0]_{k'}):a,\;b\in A(k')>,\\
&&...\\
&&G^{r}CH_{0}(A)=<\sum_{j=0}^{r}(-1)^{r-j}\Tr_{k'/k}(\sum_{1\leq\nu_{1}<\dots<\nu_{j}\leq r}[a_{\nu_{1}}+\dots+a_{\nu_{j}}]_{k'}):a_{1},\dots,a_{r}\in A(k')>,
\end{eqnarray*} where the summand corresponding to $j=0$  is $(-1)^{r}\Tr_{k'/k}([0]_{k'})$, and $k'$ runs through all finite extensions of $k$.
\end{prop}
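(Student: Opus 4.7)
The plan is to verify $G^r\subseteq F^r$ directly by showing that every generator of $G^r$ is killed by $\Phi_m$ for each $0\le m<r$. Fix a finite extension $k'/k$ and points $a_1,\dots,a_r\in A(k')$, and denote the associated generator by
$$\gamma = \sum_{j=0}^{r}(-1)^{r-j}\Tr_{k'/k}\Bigl(\sum_{1\le\nu_1<\dots<\nu_j\le r}[a_{\nu_1}+\dots+a_{\nu_j}]_{k'}\Bigr).$$
The case $m=0$ is immediate: $\Phi_0$ is the degree map, every term $\Tr_{k'/k}([b]_{k'})$ contributes $[k':k]$, and so $\Phi_0(\gamma)=[k':k]\sum_{j=0}^{r}(-1)^{r-j}\binom{r}{j}=[k':k](1-1)^r$, which vanishes for $r\ge 1$.

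For $1\le m<r$ the first step I would carry out is the compatibility
$$\Phi_m\bigl(\Tr_{k'/k}([a]_{k'})\bigr) = \{a,\dots,a\}_{k'/k}\qquad (a\in A(k')),$$
where the symbol on the right has $m$ copies of the $k'$-point $a$. Using the convention $\Tr_{k'/k}([a]_{k'})=[k':k(a)][a]$, the left side equals $[k':k(a)]\{a,\dots,a\}_{k(a)/k}$, while applying the Somekawa projection formula (relation (1)) to a single slot absorbs the factor $\Tr_{k'/k(a)}\res_{k'/k(a)}=[k':k(a)]$ into that slot and rewrites the right side in the same form. This is the one point where a defining relation of the K-group is actually invoked.

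Granted this compatibility, $\Phi_m(\gamma)$ collapses (the $j=0$ piece vanishes because any symbol with a zero slot does so by multilinearity) to
$$\sum_{j=1}^{r}(-1)^{r-j}\sum_{1\le\nu_1<\dots<\nu_j\le r}\{a_{\nu_1}+\dots+a_{\nu_j},\dots,a_{\nu_1}+\dots+a_{\nu_j}\}_{k'/k}.$$
Expanding each slot by multilinearity and regrouping by the multi-index $(\mu_1,\dots,\mu_m)\in\{1,\dots,r\}^m$ that one reads off slot by slot, the coefficient of the symbol $\{a_{\mu_1},\dots,a_{\mu_m}\}_{k'/k}$ becomes
$$\sum_{j=|M|}^{r}(-1)^{r-j}\binom{r-|M|}{j-|M|} = (1-1)^{r-|M|},$$
where $M\subseteq\{1,\dots,r\}$ is the set of distinct values appearing in $(\mu_1,\dots,\mu_m)$. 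Since $|M|\le m<r$ this coefficient is zero, and so $\Phi_m(\gamma)=0$.

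No step is a serious obstacle: the core of the argument is a formal symbol manipulation combined with a standard binomial identity. The one piece that requires some care is the compatibility between $\Phi_m$ and the pushforward $\Tr_{k'/k}$ on zero-cycles of the form $[a]_{k'}$, since $\Tr_{k'/k}([a]_{k'})$ and $[a]$ differ by the factor $[k':k(a)]$ whenever $k(a)\subsetneq k'$; this discrepancy has to be absorbed into the K-group symbol via the projection formula, and keeping that case straight is the only genuine subtlety.
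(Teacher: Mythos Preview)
Your proof is correct and follows essentially the same approach as the paper: first establish the compatibility $\Phi_m(\Tr_{k'/k}([a]_{k'}))=\{a,\dots,a\}_{k'/k}$ via the projection formula (the paper phrases this as commutation of $\Phi_m$ with the trace, using the auxiliary map $\Phi_m^{k'}$ on $CH_0(A_{k'})$), and then deduce the vanishing from multilinearity of the symbol. The paper simply asserts the second step (``by the multilinearity of the symbol and the fact that $\Phi_{r-1}$ is a group homomorphism''), whereas you spell out the binomial identity that makes it work; your version is therefore more explicit but not substantively different.
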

\begin{proof} The claim is clear for $r=0$. Let $r\geq 1$ and let $a_{1},\dots,a_{r}\in A(k')$. We denote by $\Phi_{r-1}^{k'}$ the map $CH_{0}(A_{k'})\rightarrow S_{r-1}(k';A\times_{k}k')$ defined as in proposition \ref{phi}. We claim that:
\begin{eqnarray*}\Phi_{r-1}(\sum_{j=0}^{r}(-1)^{r-j}\Tr_{k'/k}(\sum_{1\leq\nu_{1}<\dots<\nu_{j}\leq r}[a_{\nu_{1}}+\dots+a_{\nu_{j}}]_{k'}))=\\
\sum_{j=0}^{r}(-1)^{r-j}\Tr_{k'/k}(\Phi_{r-1}^{k'}
(\sum_{1\leq\nu_{1}<\dots<\nu_{j}\leq r}[a_{\nu_{1}}+\dots+a_{\nu_{j}}]_{k'}))=0.
\end{eqnarray*} The last equality is deduced by the  multilinearity of the symbol $\{x_{1},\dots,x_{r-1}\}_{k'/k'}$ and the fact that $\Phi_{r-1}$ is a group homomorphism.
\\
To justify the first equality, we need to verify that $\Tr_{k'/k}(\Phi_{r-1}^{k'}([a]_{k'}))=\Phi_{r-1}(\Tr_{k'/k}([a]_{k'}))$, where $a\in A(k')$ is a $k'$-rational point of $A$.  Notice that in general the residue field $k(a)$ might be strictly smaller than $k'$. (See Convention-Notation at the end of section 2). We have:
\begin{eqnarray*} \Tr_{k'/k}(\Phi_{r}^{k'}([a]_{k'}))=&&\Tr_{k'/k}(\{\res_{k'/k(a)}(a),\dots,\res_{k'/k(a)}(a)\}_{k'/k'})=\\&&
\{\res_{k'/k(a)}(a),\dots,\res_{k'/k(a)}(a)\}_{k'/k}.\\
\Phi_{r}(\Tr_{k'/k}([a]_{k'}))=&&\Phi_{r}([k':k(a)]\cdot[a]))=[k':k(a)]\{a,\dots,a\}_{k(a)/k}=\\
&&\{[k':k(a)]a,\dots,a\}_{k(a)/k}=\{\Tr_{k'/k(a)}(\res_{k'/k(a)}(a)),a,\dots,a)\}_{k(a)/k}=\\
&&\{\res_{k'/k(a)}(a),\dots,\res_{k'/k(a)}(a)\}_{k'/k}.
\end{eqnarray*}

\end{proof}
\medskip
\begin{prop}\label{psi} Let $r\geq 0$ be an integer. There is a well defined abelian group homomorphism
\begin{eqnarray*}&&\Psi_{r}:S_{r}(k;A)\longrightarrow \frac{F^{r}CH_{0}(A)}{F^{r+1}CH_{0}(A)}\\
&&\{a_{1},\dots,a_{r}\}_{k'/k}\longrightarrow\sum_{j=0}^{r}(-1)^{r-j}\Tr_{k'/k}(\sum_{1\leq\nu_{1}<\dots<\nu_{j}\leq r}[a_{\nu_{1}}+\dots+a_{\nu_{j}}]_{k'}),
\end{eqnarray*} where the summand corresponding to $j=0$  is $(-1)^{r}\Tr_{k'/k}([0]_{k'})$. Moreover, the homomorphisms $\Psi_{r}$ satisfy the property, $\Phi_{r}\circ\Psi_{r}=\cdot r!$ on $S_{r}(k;A)$.
\end{prop}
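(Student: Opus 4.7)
The plan is to avoid checking each Somekawa defining relation separately and instead deduce well-definedness of $\Psi_r$ formally from the computation of $\Phi_r \circ \Psi_r$. First I would define an auxiliary homomorphism $\widetilde{\Psi}_r$ from the free abelian group $F$ generated by tuples $(a_1, \ldots, a_r)_{k'/k}$ to $CH_0(A)$ by the same alternating-sum formula, extended $\mathbb{Z}$-linearly. Since each generator image is literally a generator of $G^r CH_0(A)$, Proposition \ref{Gr} shows at once that the image of $\widetilde{\Psi}_r$ lies in $F^r CH_0(A)$.

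The central step is to compute the composition $\Phi_r \circ \widetilde{\Psi}_r : F \to S_r(k;A)$. The key bridging identity is
$$\Phi_r\bigl(\Tr_{k'/k}([b]_{k'})\bigr) = \{b, \ldots, b\}_{k'/k} \quad \text{for } b \in A(k'),$$
which follows from multilinearity of the Somekawa symbol and the relation (\ref{K-1}) inside $K_r(k;A)$, via the manipulation $[k':k(b)]\cdot\{b,\ldots,b\}_{k(b)/k} = \{\Tr_{k'/k(b)}(\res(b)), b, \ldots, b\}_{k(b)/k} = \{\res(b), \ldots, \res(b)\}_{k'/k}$. Applying this termwise to the alternating sum yields
$$\Phi_r \circ \widetilde{\Psi}_r\bigl((a_1, \ldots, a_r)_{k'/k}\bigr) = \sum_{S \subseteq \{1,\ldots,r\}} (-1)^{r-|S|} \Bigl\{\textstyle\sum_{i \in S} a_i, \ldots, \sum_{i \in S} a_i\Bigr\}_{k'/k}$$
in $S_r(k;A)$. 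Expanding each inner symbol by multilinearity as $\sum_{(i_1,\ldots,i_r)\in S^r}\{a_{i_1},\ldots,a_{i_r}\}_{k'/k}$, swapping the order of summation, and invoking the inclusion-exclusion identity $\sum_{S \supseteq T}(-1)^{r-|S|} = \delta_{T,\{1,\ldots,r\}}$, only the surjective tuples---i.e.\ the permutations of $\{1,\ldots,r\}$---survive, producing $\sum_{\sigma \in \Sigma_r}\{a_{\sigma(1)},\ldots,a_{\sigma(r)}\}_{k'/k} = r!\cdot\{a_1,\ldots,a_r\}_{k'/k}$ after using the symmetry relations defining $S_r(k;A)$.

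Writing $s:F \twoheadrightarrow S_r(k;A)$ for the tautological surjection, the calculation reads $\Phi_r \circ \widetilde{\Psi}_r = r!\cdot s$. Well-definedness now drops out formally: for any $\rho \in \ker(s)$---which comprises the multilinearity, (\ref{K-1}), (\ref{K-2}), and symmetry relations simultaneously---we get $\Phi_r(\widetilde{\Psi}_r(\rho)) = r!\cdot s(\rho) = 0$, so $\widetilde{\Psi}_r(\rho)$ lies in $F^r \cap \ker\Phi_r$. But this intersection is exactly $F^{r+1}$ by Definition \ref{fil}. Hence $\widetilde{\Psi}_r$ descends through $s$ to a well-defined group homomorphism $\Psi_r:S_r(k;A) \to F^r/F^{r+1}$, and the identity $\Phi_r \circ \widetilde{\Psi}_r = r!\cdot s$ passes to $\Phi_r \circ \Psi_r = r!\cdot \mathrm{id}_{S_r(k;A)}$.

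The main technical subtlety will be the bridging identity $\Phi_r(\Tr_{k'/k}([b]_{k'})) = \{b,\ldots,b\}_{k'/k}$, which is what lets us work upstairs on the free group and thereby bypass a case-by-case verification of (\ref{K-1}) and (\ref{K-2}) modulo $F^{r+1}$. Once that is in place, everything else is combinatorics---inclusion-exclusion on subsets of $\{1,\ldots,r\}$---together with the clean application of the filtration formula $F^{r+1} = F^r \cap \ker\Phi_r$.
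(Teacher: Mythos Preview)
Your proof is correct and is in fact a streamlining of the paper's argument. The paper proceeds in three separate steps: it first invokes the inclusion $G^{r+1}\subset F^{r+1}$ (Proposition~\ref{Gr}) to obtain multilinearity; it then computes $\Phi_r\circ\Psi_r=r!$ and uses this, exactly as you do, to dispose of relation~(\ref{K-1}); but for relation~(\ref{K-2}) it gives a direct geometric verification (Lemma~\ref{ff}), showing that the Weil-reciprocity sum $\sum_v \ord_v(f)\Tr_{k_v/k}([s_v(x)]_{k_v})$ already vanishes in $CH_0(A)$ itself, not merely modulo $F^{r+1}$. Your observation is that the single identity $\Phi_r\circ\widetilde{\Psi}_r = r!\cdot s$ on the free group, combined with $F^{r+1}=F^r\cap\ker\Phi_r$, kills \emph{every} relation---multilinearity, (\ref{K-1}), (\ref{K-2}), and symmetry---in one stroke, so no case-by-case check is needed. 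This is cleaner for the proposition at hand; the cost is that you forgo the stronger statement of Lemma~\ref{ff}, which the paper re-uses elsewhere (for instance in Corollary~\ref{S1}, and in Proposition~\ref{algcl} where one needs $\overline{\Psi}_r$ to land in $G^r/G^{r+1}$ rather than $F^r/F^{r+1}$).
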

\begin{proof} \underline{Step 1:}
We define a map
\begin{eqnarray*}&&\Psi_{r}:\bigoplus _{k'/k}(A(k')\times A(k')\times\dots\times A(k'))\longrightarrow \frac{F^{r}CH_{0}(A)}{F^{r+1}CH_{0}(A)}\\
&&(a_{1},\dots,a_{r})\longrightarrow\sum_{j=0}^{r}(-1)^{r-j}\Tr_{k'/k}(\sum_{1\leq\nu_{1}<\dots<\nu_{j}\leq r}[a_{\nu_{1}}+\dots+a_{\nu_{j}}]_{k'}),
\end{eqnarray*} where the direct sum extends over all finite extensions of $k$.  Notice that the inclusion $G^{r+1}\subset F^{r+1}$ proved in proposition \ref{Gr}, forces the  map $\Psi_{r}$ to be multilinear, and thus we obtain a well defined map $$\Psi_{r}:
\bigoplus _{k'/k}(A(k')\otimes A(k')\otimes\dots\otimes A(k'))\longrightarrow \frac{F^{r}CH_{0}(A)}{F^{r+1}CH_{0}(A)}.$$
\underline{Step 2:} We claim that the composition $$\bigoplus _{k'/k}(A(k')\otimes A(k')\otimes\dots\otimes A(k'))\stackrel{\Psi_{r}}{\longrightarrow} \frac{F^{r}CH_{0}(A)}{F^{r+1}CH_{0}(A)}\stackrel{\Phi_{r}}{\longrightarrow}S_{r}(k;A)$$
sends $a_{1}\otimes\dots\otimes a_{r}$ to $r!\{a_{1},\dots,a_{r}\}_{k'/k}$. For, we observe that,
\begin{eqnarray*}&&\Phi_{r}([\sum_{i=1}^{r}a_{i}])=\{\sum_{i=1}^{r}a_{i},\dots,\sum_{i=1}^{r}a_{i}\}_{k'/k}=
\sum_{i_{1}=1}^{r}\sum_{i_{2}=1}^{r}\dots\sum_{i_{r}=1}^{r}
\{a_{i_{1}},\dots,a_{i_{r}}\}_{k'/k}
\end{eqnarray*} and by a combinatorial counting we can see that the only terms of this sum that do not get canceled by $\Phi_{r}(\sum_{j=0}^{r-1}(-1)^{r-j}(\sum_{1\leq\nu_{1}<\dots<\nu_{j}\leq r}[a_{\nu_{1}}+\dots+a_{\nu_{j}}]_{k'}))$ are those where all the $a_{i_{l}}$ are distinct. Thus, using the symmetry of the symbol in $S_{r}(k;A)$, we get all the possible combinations of the set $\{a_{1},\dots,a_{r}\}$ without repetition, which are exactly $r!$.\\
 \\
Notice that the above property forces the elements of the form $(a_{1}\otimes\dots\otimes\Tr_{E/L}(a_{i})\otimes\dots\otimes a_{r})$ and $\res_{E/L}(a_{1})\otimes\dots\otimes a_{i}\otimes\dots\otimes \res_{E/L}(a_{r})$ to have the same image under $\Psi_{r}$, where $E\supset L\supset k$ is a tower of finite extensions,
$a_{i}\in A(E)$ and $a_{j}\in A(L)$, for all $j\neq i$. For,
\begin{eqnarray*}\Phi_{r}\circ \Psi_{r}((a_{1}\otimes\dots\otimes\Tr_{E/L}(a_{i})\otimes\dots\otimes a_{r}))&&=r!\{a_{1},\dots,\Tr_{E/L}(a_{i}),\dots, a_{r}\}_{L/k}\\
&&=r!\{\res_{E/L}(a_{1}),\dots, a_{i},\dots, \res_{E/L}(a_{r})\}_{E/k}\\&&=\Phi_{r}\circ \Psi_{r}(\res_{E/L}(a_{1})\otimes\dots\otimes a_{i}\otimes\dots\otimes \res_{E/L}(a_{r})).
\end{eqnarray*}
\underline{Step 3:}
Let $K\supset k$ be a function field in one variable over $k$ and assume we are given  $f\in K^{\times}$ and $x_{1},\dots,x_{r}\in A(K)$. We need to show that
$$\sum_{v\;\rm{place\;of\;K/k}}\ord_{v}(f)(\sum_{j=0}^{r}(-1)^{r-j}\Tr_{k_{v}/k}(\sum_{1\leq\nu_{1}<\dots<\nu_{j}\leq r}[s_{v}(x_{\nu_{1}})+\dots+s_{v}(x_{\nu_{j}})]_{k(v)}))=0.$$
This will follow by the fact that for every place $v$ of $K$ over $k$, the map $s_{v}$ is a group homomorphism and by the following lemma.  \\
\\
\begin{lem}\label{ff}
For every $x\in A(K)$ it holds $\sum_{v}\ord_{v}(f)\Tr_{k_{v}/k}([s_{v}(x)]_{k_{v}})=0$, where the sum runs through all the places of $K$ over $k$.
\end{lem}
\begin{proof}
Let $C$ be the unique smooth projective curve that corresponds to the extension $K/k$. By the valuative criterion of properness, we obtain that the map $x:\Spec K\rightarrow A$ factors through the generic point inclusion $\eta:\Spec K\hookrightarrow C$ as follows:
\[ \xymatrix{
&\Spec K\ar[d]^{\eta} \ar[r]^{x}  & A\\
&C\ar[ru]^{\widetilde{x}}. \\
}
\] Here the map $\widetilde{x}:C\rightarrow A$ is given by $\widetilde{x}(v)=s_{v}(x)$. Since $\widetilde{x}$ is proper, it induces a push-forward map $$\widetilde{x}_{\star}:CH_{0}(C)\rightarrow CH_{0}(A).$$ Since $CH_{0}(C)=Pic(C)$ and $\dv(f)=0$ in $Pic(C)$, this yields $$\widetilde{x}_{\star}(\dv(f))=\sum_{v}\ord_{v}(f)\Tr_{k_{v}/k}([s_{v}(x)]_{k_{v}})=0.$$
\end{proof}
The last fact  completes the argument that the map $\Psi_{r}$ factors through $K_{r}(k;A)$. Finally, it is clear that if $\sigma$ is any permutation of the set $\{1,\dots,r\}$, it holds $\Psi_{r}(\{a_{1},\dots,a_{r}\})=\Psi_{r}(\{a_{\sigma(1)},\dots,a_{\sigma(r)}\})$. Therefore, we obtain a morphism $\displaystyle S_{r}(k;A)\stackrel{\Psi_{r}}{\longrightarrow} \frac{F^{r}}{F^{r+1}}$ as stated in the proposition.

\end{proof}
\medskip
\begin{rem}\label{r!} We observe that for every $r\geq 0$ the image of the map $\Psi_{r}$ is contained in $(G^{r}+F^{r+1})/F^{r+1}$. Furthermore, the composition
$$\Psi_{r}\circ\Phi_{r}:(G^{r}+F^{r+1})/F^{r+1}\rightarrow S_{r}(k;A)\rightarrow (G^{r}+F^{r+1})/F^{r+1}$$ is multiplication by $r!$.
\end{rem}
\medskip
\begin{cor}\label{S1} The canonical map  $A(k)\stackrel{\iota}{\longrightarrow} K_{1}(k;G)$ sending $a\in A(k)$ to the symbol $\{a\}_{k/k}$ is  an isomorphism.
\end{cor}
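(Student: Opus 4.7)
The plan is to exhibit an explicit two-sided inverse $\rho: K_1(k;A) \to A(k)$ of $\iota$. First I would note that $\iota$ is automatically a group homomorphism, since it factors as the natural inclusion $A(k) \hookrightarrow \bigoplus_{k'/k} A(k')$ followed by the quotient onto $K_1(k;A)$.

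To construct $\rho$, I would define a preliminary map
$$\widetilde{\rho}: \bigoplus_{k'/k} A(k') \longrightarrow A(k), \qquad (a_{k'})_{k'/k} \longmapsto \sum_{k'} \Tr_{k'/k}(a_{k'}),$$
and then verify that it annihilates both families of defining relations of $K_1(k;A)$. The trace-restriction relation \ref{K-1}, specialised to $n=1$, reduces to the identification $\{\Tr_{L/E}(a)\}_{E/k} = \{a\}_{L/k}$, and this is respected by $\widetilde{\rho}$ because transitivity of the trace gives $\Tr_{E/k}(\Tr_{L/E}(a)) = \Tr_{L/k}(a)$. For the function-field relation \ref{K-2}, given $f \in K^{\times}$ and $x \in A(K)$ with $K/k$ a function field in one variable, what must be shown is
$$\sum_{v} \ord_v(f)\,\Tr_{k_v/k}(s_v(x)) = 0 \quad \text{in } A(k),$$
and this is precisely Lemma \ref{ff}, proved earlier by extending $x$ to a morphism $\widetilde{x}: C \to A$ from the smooth projective model $C$ of $K/k$ and invoking the triviality of $\dv(f)$ in $\mathrm{Pic}(C)$. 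Hence $\widetilde{\rho}$ descends to a well-defined homomorphism $\rho: K_1(k;A) \to A(k)$ sending $\{a\}_{k'/k}$ to $\Tr_{k'/k}(a)$.

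It remains to check the two compositions. On one hand $\rho \circ \iota(a) = \Tr_{k/k}(a) = a$, so $\rho \circ \iota = \mathrm{id}_{A(k)}$. On the other hand, for any generating symbol $\{a\}_{k'/k}$ of $K_1(k;A)$,
$$\iota \circ \rho(\{a\}_{k'/k}) = \{\Tr_{k'/k}(a)\}_{k/k} = \{a\}_{k'/k},$$
the last equality by relation \ref{K-1} applied with $E=k$, $L=k'$. Since such symbols generate $K_1(k;A)$, we obtain $\iota \circ \rho = \mathrm{id}_{K_1(k;A)}$, completing the proof.

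The only genuinely substantive ingredient here is the verification of \ref{K-2}, and this is already packaged in Lemma \ref{ff}; everything else is formal manipulation of trace and restriction, so there is no real obstacle beyond assembling these pieces. (Alternatively, the corollary can be read off directly from Theorem \ref{iso2} at $r=1$, since $F^1/F^2 \simeq A(k)$ via the Albanese map and $\Phi_1([a]-[0]) = \{a\}_{k/k} = \iota(a)$ for $a \in A(k)$; but the direct construction above has the advantage of producing the inverse $\rho$ explicitly.)
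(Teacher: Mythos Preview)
Your proof is correct and follows essentially the same approach as the paper: the paper's proof simply declares that the map $\{a\}_{k'/k}\mapsto \Tr_{k'/k}(a)$ is well defined by Lemma~\ref{ff}, whereas you spell out the verification of both defining relations and of both compositions. One minor remark: Lemma~\ref{ff} is literally a statement in $CH_{0}(A)$ rather than in $A(k)$, but the same argument (extending $x$ to $\widetilde{x}:C\to A$ and using that principal divisors map to zero) yields the needed vanishing in $A(k)$, and the paper itself makes the identical appeal without further comment.
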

\begin{proof} It follows by lemma (\ref{ff}) that the inverse map
\begin{eqnarray*}&&K_{1}(k;G)\rightarrow A(k)\\
&&\{a\}_{k'/k}\rightarrow \Tr_{k'/k}(a)
\end{eqnarray*} is well defined.
\end{proof}
Our main Theorem now follows easily by the two previous propositions.
\medskip
\begin{thm}\label{Iso} Let $k$ be a field and $A$ an abelian variety over $k$. For the filtration $F^{r}CH_{0}(A)$ defined above, there are canonical isomorphisms of abelian groups:
$$\Phi_{r}:\mathbb{Z}[\frac{1}{r!}]\otimes\frac{F^{r}}{F^{r+1}}\stackrel{\simeq}{\longrightarrow}\mathbb{Z}[\frac{1}{r!}]
\otimes S_{r}(k;A), \;r\geq 1,$$ with $\displaystyle\Phi_{r}^{-1}=\frac{1}{r!}\Psi_{r}$. Moreover, the group $F^{2}CH_{0}(A)$ is precisely the Albanese kernel of $A$.
\end{thm}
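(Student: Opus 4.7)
The plan is to derive Theorem \ref{Iso} as a formal consequence of Propositions \ref{phi} and \ref{psi}, exploiting the fact that the filtration $F^{\bullet}$ is designed precisely so that $\Phi_{r}$ automatically becomes injective on successive quotients.

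First I would check that $\Phi_{r}$ descends to $F^{r}/F^{r+1}$. By Definition \ref{fil}, $F^{r+1}=\bigcap_{j=0}^{r}\ker\Phi_{j}\subseteq\ker\Phi_{r}$, so the restriction of $\Phi_{r}$ to $F^{r}$ kills $F^{r+1}$ and factors through an induced homomorphism $\overline{\Phi_{r}}\colon F^{r}/F^{r+1}\to S_{r}(k;A)$. The kernel of $\overline{\Phi_{r}}$ is $(F^{r}\cap\ker\Phi_{r})/F^{r+1}=F^{r+1}/F^{r+1}=0$, so $\overline{\Phi_{r}}$ is already injective integrally; because $\mathbb{Z}[1/r!]$ is a flat $\mathbb{Z}$-module, this injectivity persists after tensoring.

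Next I would invoke Proposition \ref{psi}, which provides $\Psi_{r}\colon S_{r}(k;A)\to F^{r}/F^{r+1}$ satisfying $\overline{\Phi_{r}}\circ\Psi_{r}=r!\cdot\mathrm{id}_{S_{r}(k;A)}$. After inverting $r!$, the map $\frac{1}{r!}\Psi_{r}$ is a right inverse to $\overline{\Phi_{r}}\otimes\mathbb{Z}[1/r!]$, which is therefore surjective; combined with the injectivity from the previous step it is a bijection. A right inverse of a bijection is automatically its two-sided inverse, so $\overline{\Phi_{r}}^{-1}=\frac{1}{r!}\Psi_{r}$ as claimed (and, as a byproduct, $\Psi_{r}\circ\overline{\Phi_{r}}=r!$ on $F^{r}/F^{r+1}\otimes\mathbb{Z}[1/r!]$, upgrading Remark \ref{r!} from $(G^{r}+F^{r+1})/F^{r+1}$ to the full quotient).

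For the Albanese kernel statement I would simply unfold the definitions: $F^{2}=\ker\Phi_{0}\cap\ker\Phi_{1}$, with $\ker\Phi_{0}=A_{0}(A)$ since $\Phi_{0}$ is the degree map. Corollary \ref{S1} identifies $S_{1}(k;A)=K_{1}(k;A)\xrightarrow{\sim}A(k)$ via $\{a\}_{k'/k}\mapsto\Tr_{k'/k}(a)$, under which $\Phi_{1}([a])=\Tr_{k(a)/k}(a)$. Since $A$ is its own Albanese variety (with the identity as Albanese morphism), restricting this to $A_{0}(A)$ yields exactly $\alb_{A}\colon A_{0}(A)\to A(k)$, so $F^{2}=\ker\alb_{A}$ is the Albanese kernel. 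The theorem is essentially formal once Propositions \ref{phi} and \ref{psi} are in hand; no step presents a genuine obstacle. The most delicate bookkeeping point is the identification $\Phi_{1}|_{A_{0}(A)}=\alb_{A}$, which relies on the universal property of $\mathrm{Alb}_{A}$ together with the Convention at the end of Section 2 relating a closed point $a$, its class $[a]_{k'}$, and the trace $\Tr_{k'/k}$.
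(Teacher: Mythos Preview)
Your proposal is correct and follows essentially the same route as the paper: both use the definition of $F^{r+1}$ to get injectivity of $\overline{\Phi_{r}}$ on $F^{r}/F^{r+1}$, then invoke $\Phi_{r}\circ\Psi_{r}=r!$ from Proposition~\ref{psi} to obtain surjectivity after inverting $r!$, and both identify $F^{2}$ with the Albanese kernel via Corollary~\ref{S1} and the identification of $\Phi_{1}$ with $\alb_{A}$ on degree-zero cycles. The only cosmetic difference is that the paper packages injectivity and surjectivity into a short exact sequence with $r!$-torsion cokernel, whereas you argue the two halves separately.
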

\begin{proof} Definition (\ref{fil}) gives that $F^{r+1}=\ker\Phi_{r}|_{F^{r}}$. Thus, for every $r\geq 1$, we get an exact sequence
$$0\longrightarrow\frac{F^{r}}{F^{r+1}}\stackrel{\Phi_{r}}{\longrightarrow} S_{r}(k;A)\longrightarrow \frac{S_{r}(k;A)}{Im(\Phi_{r})}\longrightarrow 0.$$ Now notice that step 2 of proposition (\ref{psi}) yields an inclusion $Im(\Phi_{r})\supset r!S_{r}(k;A)$. Thus the group $S_{r}(k;A)/Im(\Phi_{r})$ is $r!$-torsion, which forces $(S_{r}(k;A)/Im(\Phi_{r}))\bigotimes\mathbb{Z}[\frac{1}{r!}]=0$. We conclude that after $\bigotimes\mathbb{Z}[\frac{1}{r!}]$, the map $\Phi_{r}$ becomes an isomorphism with inverse $\widetilde{\Psi}_{r}=\frac{1}{r!}\Psi_{r}$.\\
\label{F2}\\
Our next claim is that $F^{2}CH_{0}(A)=\ker\alb_{A}$. Using the isomorphism $A(k)\stackrel{\simeq}{\longrightarrow}K_{1}(k;A)$  (corollary \ref{S1}), the claim follows immediately from the commutative diagram
\[ \xymatrix{
&CH_{0}(A)/F^{2}\ar[r]^{\Phi_{0}\oplus\Phi_{1}} \ar[d]_{\deg\oplus\alb_{A}}  & \mathbb{Z}\oplus K_{1}(k;A)\ar[ld]^{\simeq}\\
& \mathbb{Z}\oplus A(k), \\
}
\] and the fact that $F^{2}$ is precisely the kernel of $\Phi_{0}\oplus\Phi_{1}$.

\end{proof}
\vspace{10pt}

\section{Properties of the Filtration}\label{4}
\subsection{The case $k=\overline{k}$}\label{closed}
\begin{prop}\label{algcl} If $A$ is an abelian variety over an algebraically closed field $k$, then for every $r\geq 1$ the groups $\displaystyle F^{r}CH_{0}(A)\otimes\mathbb{Z}[\frac{1}{(r-1)!}]$ and $\displaystyle G^{r}CH_{0}(A)\otimes\mathbb{Z}[\frac{1}{(r-1)!}]$ coincide.
\end{prop}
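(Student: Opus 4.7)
The plan is to argue by induction on $r$, the inclusion $G^{r}\subset F^{r}$ being supplied by Proposition~\ref{Gr}. The base case $r=1$ is immediate: over an algebraically closed field every closed point of $A$ is $k$-rational, so $F^{1}=A_{0}(A)$ is visibly generated by $[a]-[0]$ with $a\in A(k)$, which is $G^{1}$ already integrally.

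For the inductive step, assume $F^{r-1}\otimes\mathbb{Z}[\tfrac{1}{(r-2)!}]=G^{r-1}\otimes\mathbb{Z}[\tfrac{1}{(r-2)!}]$. The main construction is a refinement of $\Psi_{r-1}$ to a map
$$\widetilde{\Psi}_{r-1}:S_{r-1}(k;A)\longrightarrow G^{r-1}/G^{r},\qquad\{a_{1},\ldots,a_{r-1}\}_{k/k}\longmapsto\prod_{i=1}^{r-1}(\tau_{a_{i}}-1)[0]\pmod{G^{r}},$$
where $\tau_{a}$ denotes translation by $a$ acting on $CH_{0}(A)$; the product expands to the usual generator $\beta_{r-1}(a_{1},\ldots,a_{r-1})$ of $G^{r-1}$. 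I would check that each defining relation of $S_{r-1}(k;A)$ goes to zero in $G^{r-1}/G^{r}$: trace and restriction are vacuous since $k=\overline{k}$; symmetry follows from the commutativity of translations; multilinearity holds \emph{modulo} $G^{r}$ via the identity $\tau_{a+a'}-\tau_{a}-\tau_{a'}+\mathrm{id}=(\tau_{a}-\mathrm{id})(\tau_{a'}-\mathrm{id})$, which shows the multilinearity defect equals $\beta_{r}(a,a',a_{2},\ldots,a_{r-1})\in G^{r}$; and the Somekawa function-field relation already vanishes in $CH_{0}(A)$, because for $K/k$ a function field in one variable, $f\in K^{\times}$ and $x_{i}\in A(K)$ extended to $\widetilde{x}_{i}:\widetilde{C}\to A$ by valuative properness, the inclusion-exclusion defining $\beta_{r-1}$ rewrites
$$\sum_{v}\ord_{v}(f)\,\beta_{r-1}(\widetilde{x}_{1}(v),\ldots,\widetilde{x}_{r-1}(v))=\sum_{I\subset\{1,\ldots,r-1\}}(-1)^{r-1-|I|}(\sigma_{I}\circ\widetilde{x})_{\star}(\dv f),$$
with $\sigma_{I}(a_{1},\ldots,a_{r-1})=\sum_{i\in I}a_{i}$, and each summand on the right vanishes since $\dv f=0$ in $CH_{0}(\widetilde{C})=\operatorname{Pic}(\widetilde{C})$ and pushforward preserves rational equivalence.

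Once $\widetilde{\Psi}_{r-1}$ is in hand, the inclusion-exclusion computation of Step~2 in the proof of Proposition~\ref{psi} (plus symmetry in $S_{r-1}$) shows $\Phi_{r-1}(\beta_{r-1}(a_{1},\ldots,a_{r-1}))=(r-1)!\{a_{1},\ldots,a_{r-1}\}_{k/k}$, so $\widetilde{\Psi}_{r-1}\circ\Phi_{r-1}$ is multiplication by $(r-1)!$ on $G^{r-1}/G^{r}$. Hence every $\alpha\in G^{r-1}\cap F^{r}$ satisfies $(r-1)!\alpha\in G^{r}$. Combining with the inductive hypothesis, for $\alpha\in F^{r}\subset F^{r-1}$ we have $\alpha\in G^{r-1}$ after inverting $(r-2)!$, hence $\alpha\in F^{r}\cap G^{r-1}\subset G^{r}$ after inverting $(r-1)!$, which gives the reverse inclusion and completes the induction. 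The main obstacle I anticipate is the well-definedness of $\widetilde{\Psi}_{r-1}$ on the function-field relation; once the divisor-pushforward identity above is spotted, the rest is bookkeeping, but it is precisely this step where the algebraic-closedness hypothesis and the telescoping structure of $\beta_{r-1}$ interact in a nontrivial way.
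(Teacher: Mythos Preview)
Your proof is correct and follows essentially the same route as the paper: both arguments proceed by induction, construct the refinement $\widetilde{\Psi}_{r-1}:S_{r-1}(k;A)\to G^{r-1}/G^{r}$ (the paper calls it $\overline{\Psi}_{r-1}$), check its well-definedness via the same multilinearity-mod-$G^{r}$ and divisor-pushforward computations, and then use $\widetilde{\Psi}_{r-1}\circ\Phi_{r-1}=(r-1)!$ on $G^{r-1}/G^{r}$ together with the inductive hypothesis to close the argument. One minor correction to your closing remark: the algebraic-closedness hypothesis is used only to render the trace/restriction relation vacuous, not in the function-field step---the latter already vanishes in $CH_{0}(A)$ over any base field (this is Lemma~\ref{ff}).
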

\begin{proof}
Notice that the  statement  holds trivially, if $r=1$. Let $r\geq 1$. Since the base field $k$ is algebraically closed, the group $S_{r}(k;A)$ is divisible, and we therefore have an equality $r!S_{r}(k;A)=S_{r}(k;A)$. Thus, for every $r\geq 1$, we obtain an isomorphism, $\Phi_{r}:F^{r}/F^{r+1}\stackrel{\simeq}{\longrightarrow} S_{r}(k;A)$. We will show by induction on $r$ that $\displaystyle F^{r}CH_{0}(A)\otimes\mathbb{Z}[\frac{1}{(r-1)!}]=G^{r}CH_{0}(A)\otimes\mathbb{Z}[\frac{1}{(r-1)!}]$.
Assume $\displaystyle F^{r}\otimes\mathbb{Z}[\frac{1}{(r-1)!}]=G^{r}\otimes\mathbb{Z}[\frac{1}{(r-1)!}]$ for some $r\geq 1$. Call $\overline{\Phi}_{r}:G^{r}/G^{r+1}\rightarrow S_{r}(k;A)$ the map induced by $\Phi_{r}$. Notice that the map
\begin{eqnarray*}&&\overline{\Psi}_{r}:S_{r}(k;A)\rightarrow G^{r}/G^{r+1}\\
&&\{a_{1},\dots,a_{r}\}\longrightarrow \sum_{j=0}^{r}(-1)^{r-j}\sum_{1\leq\nu_{1}<\dots<\nu_{j}\leq r}[a_{\nu_{1}}+\dots+a_{\nu_{j}}]
\end{eqnarray*} is well defined. The proof is essentially the same as the one of the well definedness of $\Psi_{r}$ (proposition \ref{psi}). Namely, steps 1 and 3 of the proof  apply directly in this setting, while step 2 is a tautology, since  there are no finite extensions of $k$, and hence no non-trivial Trace-Restriction maps.
We therefore obtain a commutative diagram as follows:
\[ \xymatrix{
&\displaystyle\frac{F^{r}}{F^{r+1}}\otimes\mathbb{Z}[\frac{1}{r!}]\ar[r]^{\simeq}  & \displaystyle S_{r}(k;A)\otimes\mathbb{Z}[\frac{1}{r!}]\\
&\displaystyle\frac{G^{r}}{G^{r+1}}\otimes\mathbb{Z}[\frac{1}{r!}]\ar[u]^{p}\ar[ru]^{\overline{\Phi}_{r}}, \\
}
\] where the map $p$ is the natural projection. The induction hypothesis clearly implies that $\displaystyle F^{r}\otimes\mathbb{Z}[\frac{1}{r!}]=G^{r}\otimes\mathbb{Z}[\frac{1}{r!}]$. Moreover, the composition
$$G^{r}/G^{r+1}\stackrel{\overline{\Phi}_{r}}{\longrightarrow}S_{r}(k;A)\stackrel{\overline{\Psi}_{r}}{\longrightarrow} G^{r}/G^{r+1}$$ is the multiplication by $r!$ (see remark \ref{r!}). In particular, after $\displaystyle \otimes\mathbb{Z}[\frac{1}{r!}]$, the map $\overline{\Phi}_{r}$ admits a section $\displaystyle\frac{1}{r!}\overline{\Psi}_{r}$.  We thus obtain an equality $\displaystyle F^{r+1}\otimes\mathbb{Z}[\frac{1}{r!}]=G^{r+1}\otimes\mathbb{Z}[\frac{1}{r!}]$.

\end{proof}
\begin{rem} We note that the filtration $G^{r}CH_{0}(A)$ has been studied before by S. Bloch, A. Beauville and others. We refer to \cite{Bl}, \cite{Beau1} and \cite{Beau2} for some results concerning this filtration.
\end{rem}
\begin{rem}\label{preserve} We now come back to the case of a non-algebraically closed field $k$. If $L\supset k$ is any field extension, the flat map $\pi_{L}:A_{L}\rightarrow A$ induces a pull back map $\res_{L/k}:CH_{0}(A)\rightarrow CH_{0}(A_{L})$, with $\res_{L/k}([a])=\sum_{\pi_{L}(\widetilde{a})=a}e_{\widetilde{a}}[\widetilde{a}]$, where $a$ is any closed point of $A$ and $e_{\widetilde{a}}$ is the length of the Artin local ring $A_{L}\times_{A}k(a)$ at $\widetilde{a}$. Notice that for $a\in A$ we have
$$\Tr_{L/k}(\res_{L/k}([a]))=(\sum_{\pi_{L}(\widetilde{a})=a}e_{\widetilde{a}})[a]=[L:k][a].$$
It is an immediate consequence of the definition of the restriction map between the $K$-groups (see functoriality in the subsection 2.2) that the following diagram commutes, for every $r\geq 0$.
\[ \xymatrix{
&CH_{0}(A)\ar[r]^{\res_{L/k}}\ar[d]^{\Phi_{r}}  & CH_{0}(A_{L})\ar[d]^{\Phi_{r}^{L}}\\
&S_{r}(k;A)\ar[r]^{\res_{L/k}} & S_{r}(L;A_{L}). \\
}
\] This in particular implies that the filtration $\{F^{r}\}_{r\geq 0}$ is preserved under restriction maps. For, if $x\in F^{r}CH_{0}(A)$, then by definition $\Phi_{r}(x)=0$. Thus,
$$\Phi_{r}^{L}(\res_{L/k}(x))=\res_{L/k}(\Phi_{r}(x))=0,$$ and therefore $\res_{L/k}(x)\in F^{r}CH_{0}(A_{L})$.\\ Moreover, if $L/k$ is finite, then filtration $\{G^{r}\}_{r\geq 0}$ is preserved under the Trace map $\Tr_{L/k}$. To see this, we observe that the generators of the group $G^{r}CH_{0}(A_{L})$ are of the form $$\sum_{j=0}^{r}(-1)^{r-j}\Tr_{L'/L}(\sum_{1\leq\nu_{1}<\dots<\nu_{j}\leq r}[a_{\nu_{1}}+\dots+a_{\nu_{j}}]_{L'}),$$ where $L'/L$ is a finite extension and $a_{i}\in A_{L}(L')$. Then
\begin{eqnarray*}&&\Tr_{L/k}(\sum_{j=0}^{r}(-1)^{r-j}\Tr_{L'/L}(\sum_{1\leq\nu_{1}<\dots<\nu_{j}\leq r}[a_{\nu_{1}}+\dots+a_{\nu_{j}}]_{L'}))=\\&&\sum_{j=0}^{r}(-1)^{r-j}\Tr_{L'/k}(\sum_{1\leq\nu_{1}<\dots<\nu_{j}\leq r}[\pi_{L}(a_{\nu_{1}})+\dots+\pi_{L}(a_{\nu_{j}})]_{L'}),
\end{eqnarray*} which is a generator of $G^{r}CH_{0}(A)$.
\end{rem}
\medskip
\begin{cor}\label{Q} If $A$ is an abelian variety over some field $k$, not necessarily algebraically closed, then the groups $F^{r}CH_{0}(A)\otimes\mathbb{Q}$ and $G^{r}CH_{0}(A)\otimes\mathbb{Q}$ coincide.
\end{cor}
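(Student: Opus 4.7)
The inclusion $G^{r}\otimes\mathbb{Q}\subseteq F^{r}\otimes\mathbb{Q}$ is immediate from Proposition \ref{Gr}, so the task is to establish the reverse inclusion after $\otimes\mathbb{Q}$. My plan is to reduce the statement to the algebraically closed case (Proposition \ref{algcl}) by means of the standard restriction–trace argument, using the functorial properties of the two filtrations recorded in Remark \ref{preserve}.

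Concretely, fix $x\in F^{r}CH_{0}(A)$. First I would pass to $\overline{k}$ via the restriction map $\res_{\overline{k}/k}:CH_{0}(A)\to CH_{0}(A_{\overline{k}})$. By Remark \ref{preserve}, the filtration $F^{\bullet}$ is preserved under restriction, so $\res_{\overline{k}/k}(x)\in F^{r}CH_{0}(A_{\overline{k}})$. Applying Proposition \ref{algcl} (with the coefficients $\mathbb{Z}[\tfrac{1}{(r-1)!}]$ replaced by $\mathbb{Q}$), I conclude that the image of $x$ lies in $G^{r}CH_{0}(A_{\overline{k}})\otimes\mathbb{Q}$, so there exists a nonzero integer $N$ with $N\cdot \res_{\overline{k}/k}(x)\in G^{r}CH_{0}(A_{\overline{k}})$.

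Next I would descend this relation to a finite extension. Since $CH_{0}(A_{\overline{k}})=\varinjlim_{L/k\text{ finite}}CH_{0}(A_{L})$, and each generator of $G^{r}CH_{0}(A_{\overline{k}})$ involves only finitely many closed points, there exists a finite extension $L/k$ such that the class $N\cdot \res_{L/k}(x)$ already belongs to $G^{r}CH_{0}(A_{L})$. Now I apply the trace $\Tr_{L/k}$. By the second half of Remark \ref{preserve}, $\Tr_{L/k}$ sends $G^{r}CH_{0}(A_{L})$ into $G^{r}CH_{0}(A)$, so
\[
\Tr_{L/k}\bigl(N\cdot \res_{L/k}(x)\bigr)\;=\;N[L:k]\cdot x\;\in\;G^{r}CH_{0}(A).
\]
Dividing by $N[L:k]$ in $CH_{0}(A)\otimes\mathbb{Q}$ yields $x\otimes 1\in G^{r}CH_{0}(A)\otimes\mathbb{Q}$, which completes the proof.

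The only nontrivial point is the compatibility of $G^{r}$ with the trace map and of $F^{r}$ with restriction, but both were already checked in Remark \ref{preserve}; the rest is a standard projection-formula argument, so I do not expect any serious obstacle beyond organizing the limit step carefully.
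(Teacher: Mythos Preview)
Your proposal is correct and follows essentially the same route as the paper: restrict to $\overline{k}$, invoke Proposition \ref{algcl}, descend to a finite extension $L/k$, and apply $\Tr_{L/k}$ together with the projection formula $\Tr_{L/k}\circ\res_{L/k}=[L:k]$, using the compatibilities of Remark \ref{preserve}. The only cosmetic difference is that you clear denominators with an integer $N$ before descending, whereas the paper works directly with $\mathbb{Q}$-coefficients throughout.
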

\begin{proof} Let $x\in F^{r}CH_{0}(A)\otimes\mathbb{Q}$, for some $r\geq 0$. Then $x$ induces by restriction an element $\overline{x}=\res_{\overline{k}/k}(x)$ of $F^{r}CH_{0}(A_{\overline{k}})\otimes\mathbb{Q}$ (see remark \ref{preserve}). By proposition \ref{algcl}, we deduce that $\overline{x}\in G^{r}CH_{0}(A_{\overline{k}})\otimes\mathbb{Q}$ and we can therefore write it as $\displaystyle\overline{x}=\sum_{i=1}^{N}q_{i}x_{i}$, with $x_{i}\in G^{r}CH_{0}(A_{\overline{k}})$ and $q_{i}\in \mathbb{Q}$, for $i=1,\dots, N$. Let $L\supset k$ be a finite extension of $k$ such that all the $x_{i}$ are defined over $L$. Then we obtain:
$$\Tr_{L/k}(\res_{L/k}(x))=\sum_{i=1}^{N}q_{i}\Tr_{L/k}(x_{i})\in G^{r}CH_{0}(A)\otimes\mathbb{Q}.$$
The corollary then follows from the fact that $\Tr_{L/k}(\res_{L/k}(x))=[L:k]x$.

\end{proof}
\vspace{7pt}
\subsection{The finiteness of the Filtration}  Let $A$ be an abelian variety of dimension $d$ over some field $k$. In this section we elaborate the question if the filtration $F^{r}$ defined in section 3 stabilizes for large enough $r>0$. \\
We start by observing that the addition law on $A$, endows $CH_{0}(A)$ with a ring structure, by defining the Pontryagin product $[a]\star[b]=[a+b]$, for closed points $a,\;b$ of $A$. We can then easily see that the group $G^{r}CH_{0}(A)$ is the $r$-th power of the ideal $G^{1}$ of $(CH_{0}(A),\star)$ generated by elements of the form $\{[a]-[0],\;a\in A\}$.\\
\textbf{Fact:} The filtration $\{G^{r}\}_{r\geq 0}$ has the property  $G^{d+1}\otimes\mathbb{Q}=0$.\\
\\
S.Bloch in \cite{Bl} proves the above fact for the case of an algebraically closed base field, while A. Beauville in \cite{Beau2} gives a different proof for an abelian variety $A$ defined over $\mathbb{C}$. A few years later, C. Deninger and J.Murre in \cite{Den}, generalize Beauville's argument for an abelian variety over an arbitrary base field $k$, not necessarily algebraically closed.\\
\begin{cor} For every $r\geq d+1$, it holds $F^{r}CH_{0}(A)\otimes\mathbb{Q}=0$ and $S_{r}(k;A)\otimes\mathbb{Q}=0$, where $F^{r}$ is the filtration defined in section 3.
\end{cor}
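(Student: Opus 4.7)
The strategy is simply to combine the quoted Fact with Corollary \ref{Q} and Theorem \ref{Iso}. Concretely, Corollary \ref{Q} gives an equality
\[
F^{r}CH_{0}(A)\otimes\mathbb{Q}\;=\;G^{r}CH_{0}(A)\otimes\mathbb{Q}
\]
for every $r\geq 0$, so the first assertion will follow once we know that $G^{r}CH_{0}(A)\otimes\mathbb{Q}=0$ for all $r\geq d+1$, not merely for $r=d+1$.

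To upgrade the Fact from $r=d+1$ to all $r\geq d+1$, I would first observe that the filtration $\{G^{r}\}_{r\geq 0}$ is descending. This is exactly the content of the ring-theoretic description recalled just before the Fact: under the Pontryagin product on $CH_{0}(A)$, the subgroup $G^{1}$ is an ideal and $G^{r}$ is its $r$-th power, so $G^{r+1}\subset G^{r}$. Hence $G^{r}\otimes\mathbb{Q}\subset G^{d+1}\otimes\mathbb{Q}=0$ for all $r\geq d+1$, and combining with Corollary \ref{Q} we obtain $F^{r}CH_{0}(A)\otimes\mathbb{Q}=0$ for $r\geq d+1$.

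For the second assertion, apply Theorem \ref{Iso}: for each $r\geq 1$ the map $\Phi_{r}$ induces a canonical isomorphism
\[
\frac{F^{r}}{F^{r+1}}\otimes\mathbb{Z}\!\left[\tfrac{1}{r!}\right]\;\stackrel{\simeq}{\longrightarrow}\;S_{r}(k;A)\otimes\mathbb{Z}\!\left[\tfrac{1}{r!}\right].
\]
Since $\mathbb{Q}$ is flat over $\mathbb{Z}[\tfrac{1}{r!}]$, tensoring further with $\mathbb{Q}$ yields an isomorphism $F^{r}/F^{r+1}\otimes\mathbb{Q}\simeq S_{r}(k;A)\otimes\mathbb{Q}$. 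For $r\geq d+1$ the first part gives $F^{r}\otimes\mathbb{Q}=F^{r+1}\otimes\mathbb{Q}=0$, so the quotient vanishes and hence $S_{r}(k;A)\otimes\mathbb{Q}=0$.

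The only mildly non-routine point is the monotonicity $G^{r+1}\subset G^{r}$, which, rather than being checked by a tedious manipulation of the generator $\sum_{j}(-1)^{r-j}\Tr_{k'/k}([a_{\nu_{1}}+\dots+a_{\nu_{j}}]_{k'})$, is cleanly seen via the Pontryagin ideal description; no further obstacle arises.
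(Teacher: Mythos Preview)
Your proof is correct and follows exactly the route the paper takes: the first assertion via Corollary \ref{Q} together with the Fact $G^{d+1}\otimes\mathbb{Q}=0$, and the second via Theorem \ref{Iso}. You have simply spelled out the two points the paper leaves implicit—namely that $G^{r+1}\subset G^{r}$ (from the Pontryagin-ideal description) and how the isomorphism $F^{r}/F^{r+1}\otimes\mathbb{Q}\simeq S_{r}(k;A)\otimes\mathbb{Q}$ yields the vanishing of $S_{r}(k;A)\otimes\mathbb{Q}$.
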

\begin{proof} The first equality follows from corollary \ref{Q}, while the second from theorem \ref{Iso}.

\end{proof}
\begin{rem} We briefly recall the argument used by A. Beauville, and later by C. Deninger, J. Murre in their articles. A. Beauville uses the following Fourier Mukai transform:
\begin{eqnarray*}F:&&CH_{\bullet}(A)\otimes\mathbb{Q}\rightarrow CH_{\bullet}(\widehat{A})\otimes\mathbb{Q}\\
&&x\rightarrow \widehat{\pi}_{\star}(\exp(\mathcal{L})\cdot\pi^{\star}(x)),
\end{eqnarray*} where $\widehat{A}$ is the dual abelian variety of $A$, $\pi,\;\widehat{\pi}$ are the projections of $A\times\widehat{A}$ to $A$ and $\widehat{A}$ respectively, $\mathcal{L}$ is the Poincar\'{e} line bundle on $A\times\widehat{A}$ and the exponential $\exp(\mathcal{L})$ is defined as
$\displaystyle\exp(\mathcal{L})=\sum_{n=0}^{\infty}\frac{c_{1}(\mathcal{L})^{n}}{n!}$. Here we denote by $\cdot$ the intersection product in $CH_{\bullet}(A\times \widehat{A})$ and by $c_{1}(\mathcal{L})$  the image of $\mathcal{L}$ in $CH^{1}(A\times\widehat{A})$.\\
The map $F$ is induced by the Fourier Mukai isomorphism, $F_{D}:D(A)\rightarrow D(\widehat{A})$, between the derived categories of $A$ and $\widehat{A}$, defined by S. Mukai in \cite{Muk}, by first passing to the $K$-groups and then using the chern character isomorphism, $ch:K_{0}(A)\otimes\mathbb{Q}\stackrel{\simeq}{\longrightarrow} CH_{\bullet}(A)\otimes\mathbb{Q}$, where $CH_{\bullet}(A)$ is the Chow ring with operation the intersection product. The map $F$ has further the property of interchanging the intersection product of the ring $CH_{\bullet}(A)\otimes\mathbb{Q}$ with the Pontryagin product of $CH_{\bullet}(\widehat{A})\otimes\mathbb{Q}$. This property in turn implies that $G^{d+1}CH_{0}(A)\otimes\mathbb{Q}=0$, since $CH^{s}(\widehat{A})=0$ for $s>d$.\\
We believe that the above arguments will work after only $\otimes\Z[\frac{1}{(2d)!}]$. First,  notice that the Fourier Mukai transform $F$ can be considered as a map $\displaystyle F:CH_{\bullet}(A)\otimes\mathbb{Z}[\frac{1}{d!}]\rightarrow CH_{\bullet}(\widehat{A})\otimes\mathbb{Z}[\frac{1}{d!}]$, because the chern character isomorphism does hold after only $\displaystyle\otimes\mathbb{Z}[\frac{1}{d!}]$, (since $\displaystyle\frac{c_{1}(\mathcal{E})^{n}}{n!}=0$ for every $n>d$ and for every line bundle $\mathcal{E}$ on $A$). If after $\displaystyle\otimes\Z[\frac{1}{(2d)!}]$, the relative tangent bundle of the map $\widehat{\pi}:A\times\widehat{A}\rightarrow A$ is trivial as an element of $K_{0}(A\times\widehat{A})\otimes\Z[\frac{1}{(2d)!}]$, then by the Grothendieck Riemann-Roch theorem, the map $$F:CH_{\bullet}(A)\otimes\mathbb{Z}[\frac{1}{(2d)!}]\rightarrow CH_{\bullet}(\widehat{A})\otimes\mathbb{Z}[\frac{1}{(2d)!}]$$ will attain the above concrete description and will still
interchange the two products. This would imply that $\displaystyle G^{d+1}\otimes\mathbb{Z}[\frac{1}{(2d)!}]=0$ and further that  $\displaystyle F^{r}\otimes\mathbb{Z}[\frac{1}{(2d)!}]=F^{d+1}\otimes\mathbb{Z}[\frac{1}{(2d)!}]$, for every $r\geq d+1$.
\end{rem}
\vspace{7pt}
\subsection{An algorithm to compute generators of $F^{r}$} It is rather complicated to give a precise description of the generators of $F^{r}$, for $r\geq 3$, but things become much more concrete after $\displaystyle\otimes\mathbb{Z}[\frac{1}{r!}]$, because then the map $\Phi_{r-1}$ has a very concrete inverse, namely the map $\displaystyle\frac{1}{(r-1)!}\Psi_{r-1}$. In this section we will describe a recursive algorithm to compute generators of $\displaystyle F^{r}\otimes\mathbb{Z}[\frac{1}{(r-1)!}]$. As an application, we will give a complete set of generators of the Albanese kernel $F^{2}$ and of the group $\displaystyle F^{3}\otimes\mathbb{Z}[\frac{1}{2!}]$.\\
\\
\underline{Notation:}
If $k'\supset k$ is a finite extension and $a_{1},\dots,a_{r}\in A(k')$, we will denote by $w_{a_{1},\dots,a_{r}}$ the generator of $G^{r}$ corresponding to the $r-$tuple $(a_{1},\dots,a_{r})$, namely $$w_{a_{1},\dots,a_{r}}:=\sum_{j=0}^{r}(-1)^{r-j}\Tr_{k'/k}(\sum_{1\leq\nu_{1}<\dots<\nu_{j}\leq r}[a_{\nu_{1}}+\dots+a_{\nu_{j}}]_{k'}).$$
\begin{defn} Let $r\geq 1$. We consider the subgroup $R^{r+1}\subset F^{r}$ generated by the following two families of elements:
\begin{enumerate}\item For any finite extension $k'\supset k$ and $a_{1},\dots,a_{r+1}\in A(k')$, we require
\begin{eqnarray}\label{rel1}w_{a_{1},\dots,a_{r+1}}\in R^{r+1}.\end{eqnarray} (notice that this yields an inclusion $G^{r+1}\subset R^{r+1}$).
\item If $L\supset E\supset k$ is a tower of finite extensions, and we have elements $a_{i}\in A(L)$ for some $i\in\{1,\dots,r\}$, and $a_{j}\in A(E)$, for all $j\neq i$, then we require
    \begin{eqnarray}\label{Rel2}w_{a_{1},\dots,\Tr_{L/E}(a_{i}),\dots, a_{r}}-w_{\res_{L/E}(a_{1}),\dots,a_{i},\dots,\res_{L/E}(a_{r})}\in R^{r+1}.\end{eqnarray}
\end{enumerate}
\end{defn}
\begin{lem}\label{gen1} For every $r\geq 1$, $R^{r+1}$ is the smallest subgroup of $F^{r}$ that makes the homomorphism
\begin{eqnarray*}\Psi_{r}:\bigoplus _{k'/k}(A(k')\times A(k')\times\dots\times A(k'))&&\longrightarrow F^{r}/R^{r+1}\\
(a_{1},\dots,a_{r})_{k'/k}&&\longrightarrow w_{a_{1},\dots,a_{r}}
\end{eqnarray*} factor through $S_{r}(k;A)$. We therefore have an inclusion $R^{r+1}\subset F^{r+1}$, for every $r\geq 1$, and the composition $$S_{r}(k;A)\otimes\mathbb{Z}[\frac{1}{r!}]\stackrel{\frac{1}{r!}\Psi_{r}}{\longrightarrow} (F^{r}/R^{r+1})\otimes\mathbb{Z}[\frac{1}{r!}]\stackrel{\Phi_{r}}{\longrightarrow} S_{r}(k;A)\otimes\mathbb{Z}[\frac{1}{r!}]$$ is the identity map.
\end{lem}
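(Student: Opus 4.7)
The plan is to unwind the defining relations of $S_r(k;A)$ starting from the free direct sum $\bigoplus_{k'/k}(A(k')\times\cdots\times A(k'))$, show that exactly the relations \eqref{rel1} and \eqref{Rel2} are what is needed to make $\Psi_r$ factor (the remaining defining relations of $S_r(k;A)$ hold in $CH_0(A)$ for free), and then derive the inclusion $R^{r+1}\subseteq F^{r+1}$ and the composition identity as formal consequences of Proposition \ref{psi}.

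\textbf{Factorization through $S_r(k;A)$.} The four classes of defining relations of $S_r(k;A)$ are multilinearity, the projection formula \eqref{K-1}, the function field relation \eqref{K-2}, and the symmetry $\sigma\in\Sigma_r$. Working modulo $R^{r+1}$, I verify each in turn for the assignment $(a_1,\dots,a_r)_{k'/k}\mapsto w_{a_1,\dots,a_r}$. Multilinearity in the first slot reduces to the inclusion–exclusion identity
\[
w_{a+b,a_2,\dots,a_r}-w_{a,a_2,\dots,a_r}-w_{b,a_2,\dots,a_r}=w_{a,b,a_2,\dots,a_r},
\]
whose right-hand side is an $(r+1)$-tuple generator and so lies in $R^{r+1}$ by \eqref{rel1}; symmetry in the slots then handles every coordinate. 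The projection formula relation is precisely the generator \eqref{Rel2}. The function field relation already holds in $CH_0(A)$: since $s_v$ is a group homomorphism, $[s_v(x_{\nu_1})+\cdots+s_v(x_{\nu_j})]_{k_v}=[s_v(x_{\nu_1}+\cdots+x_{\nu_j})]_{k_v}$, and Lemma \ref{ff} applied to $y=x_{\nu_1}+\cdots+x_{\nu_j}$ kills each summand separately. Symmetry in the $a_i$ is immediate from the defining formula for $w$. Hence $\Psi_r$ descends to a homomorphism $S_r(k;A)\to F^r/R^{r+1}$.

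\textbf{Minimality and the inclusion $R^{r+1}\subseteq F^{r+1}$.} The two verifications using \eqref{rel1} and \eqref{Rel2} are reversible: any subgroup $R\subseteq F^r$ through which $\Psi_r$ factors must absorb both the multilinearity defect $w_{a+b,\cdot}-w_{a,\cdot}-w_{b,\cdot}$ and the difference between two symbols identified by \eqref{K-1}, so $R\supseteq R^{r+1}$. Applied to $R=F^{r+1}$, which by Proposition \ref{psi} does support such a factorization, this yields $R^{r+1}\subseteq F^{r+1}$. Equivalently and more concretely, generators of type \eqref{rel1} lie in $G^{r+1}\subseteq F^{r+1}$ by Proposition \ref{Gr}, while generators of type \eqref{Rel2} vanish in $F^r/F^{r+1}$ by the well-definedness asserted in Proposition \ref{psi}.

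\textbf{The composition identity and main obstacle.} To descend $\Phi_r$ to $F^r/R^{r+1}\to S_r(k;A)$, note that $\Phi_r$ annihilates $F^{r+1}\supseteq G^{r+1}$ by the very definition of the filtration, so it kills all generators of type \eqref{rel1}. On a generator of type \eqref{Rel2}, the Step 2 computation in the proof of Proposition \ref{psi} gives $\Phi_r(w_{a_1,\dots,a_r})=r!\,\{a_1,\dots,a_r\}_{k'/k}$, so the image is $r!$ times a relation of type \eqref{K-1} and vanishes in $S_r(k;A)$. The same formula shows $\Phi_r\circ\Psi_r$ is multiplication by $r!$ on $S_r(k;A)$, so after inverting $r!$ the composition $\Phi_r\circ\tfrac{1}{r!}\Psi_r$ becomes the identity on $S_r(k;A)\otimes\mathbb{Z}[\tfrac{1}{r!}]$. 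The only place where any real work is required is the combinatorial multilinearity identity above; the rest is a reorganization of Propositions \ref{phi}, \ref{Gr}, \ref{psi} and Lemma \ref{ff}. That identity is a routine inclusion–exclusion obtained by splitting subsets $S\subseteq\{1,\dots,r\}$ according to whether $1\in S$ and matching them with subsets $T\subseteq\{0,1,\dots,r\}$ split by $T\cap\{0,1\}$, so it is really the only genuine bookkeeping in the argument.
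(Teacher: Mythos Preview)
Your proof is correct and follows essentially the same approach as the paper's: check that the multilinearity and projection-formula relations force the inclusions $G^{r+1}\subset R^{r+1}$ and the generators \eqref{Rel2}, observe that the function-field relation \eqref{K-2} and the $\Sigma_r$-symmetry already hold in $CH_0(A)$ (via Lemma \ref{ff} and the defining formula for $w$), and then invoke Proposition \ref{psi} for the remaining claims. Your write-up is more explicit than the paper's---in particular you spell out why $\Phi_r$ descends to $F^r/R^{r+1}$ and you record the inclusion--exclusion identity $w_{a+b,a_2,\dots,a_r}-w_{a,a_2,\dots,a_r}-w_{b,a_2,\dots,a_r}=w_{a,b,a_2,\dots,a_r}$ governing multilinearity---but the underlying argument is the same.
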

\begin{proof} Notice that if $H\subset F^{r}$ is any subgroup, such that the map $$\Psi_{r}:\bigoplus _{k'/k}(A(k')\times A(k')\times\dots\times A(k'))\longrightarrow F^{r}/H$$ factors through $S_{r}(k;A)$, we definitely have $G^{r+1}\subset H$, since $\Psi_{r}$ needs to be multilinear. Further, elements of the form described in (\ref{Rel2}) above are necessarily in $H$, since $\{a_{1},\dots,\Tr_{L/E}(a_{i}),\dots, a_{r}\}_{E/k}=\{\res_{L/E}(a_{1}),\dots,a_{i},\dots,\res_{L/E}(a_{r})\}_{L/k}$ in $S_{r}(k;A)$. Therefore $R^{r+1}\subset H$. \\ We have no other restrictions, since if $K$ is a function field in one variable over $k$, then the relation $\sum_{v}\ord_{v}(f)\Tr_{k_{v}/k}(x)=0$ holds already in $CH_{0}(A)$, for $x\in A(K)$, and $f\in K^{\times}$ (see lemma \ref{ff}). The other statements follow directly from proposition \ref{psi}.

\end{proof}
We are now ready to describe our inductive argument.
\begin{prop}\label{gen2} For the Albanese kernel $F^{2}$ we have an equality $F^{2}=R^{2}$, and hence it can be generated by the following two families of elements:
\begin{enumerate}
\item For any finite extension $k'\supset k$, and points $a,\;b\in A(k')$, $$\Tr_{k'/k}([a+b]_{k'}-[a]_{k'}-[b]_{k'}+[0]_{k'})\in F^{2},$$
\item If $L\supset k$ is a finite extension, and $a\in A(L)$, then
$$\Tr_{L/k}([a]_{L}-[0]_{L})-([\Tr_{L/k}(a)]-[0])\in F^{2}.$$
\end{enumerate}
In general for $r\geq 2$ the group $\displaystyle F^{r}\otimes\mathbb{Z}[\frac{1}{(r-1)!}]$ can be generated by $\displaystyle R^{r}\otimes\mathbb{Z}[\frac{1}{(r-1)!}]$ and elements of the form $\displaystyle z-\frac{1}{(r-1)!}\Psi_{r-1}\circ\Phi_{r-1}(z)$, with $z\in F^{r-1}$.
\end{prop}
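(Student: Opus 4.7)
The statement has two parts: the identity $F^2=R^2$ together with its two explicit generating families, and the inductive description of $F^r\otimes\mathbb{Z}[\frac{1}{(r-1)!}]$ for $r\geq 2$. Both rest on the structural information in Lemma \ref{gen1}, which I would invoke at the start for the inclusion $R^r\subset F^r$ and for the identity $\Phi_{r-1}\circ\Psi_{r-1}=(r-1)!\cdot\mathrm{id}$.

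\textbf{Base case.} The inclusion $R^2\subset F^2$ is already at hand. The plan for the reverse inclusion is to show that $\Phi_1:F^1/R^2\to S_1(k;A)$ is an isomorphism; paired with the isomorphism $F^1/F^2\stackrel{\simeq}{\to} S_1(k;A)$ of Theorem \ref{Iso} (here $r!=1!=1$), this forces the natural surjection $F^1/R^2\twoheadrightarrow F^1/F^2$ to be an isomorphism, hence $R^2=F^2$. Lemma \ref{gen1} already gives $\Phi_1\circ\Psi_1=\mathrm{id}$ on $S_1(k;A)$. For the other composition I would check $\Psi_1\circ\Phi_1=\mathrm{id}$ directly on $F^1$: for a degree-zero cycle $\sum_i n_i[a_i]$ (so $\sum_i n_i[k(a_i):k]=0$), the conventions $\Tr_{k(a_i)/k}([a_i]_{k(a_i)})=[a_i]$ and $\Tr_{k(a_i)/k}([0]_{k(a_i)})=[k(a_i):k]\cdot[0]$ give
\[\Psi_1\Phi_1\Bigl(\sum_i n_i[a_i]\Bigr)=\sum_i n_i\bigl([a_i]-[k(a_i):k][0]\bigr)=\sum_i n_i[a_i],\]
using the degree-zero hypothesis in the last step. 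The two listed generating families of $F^2=R^2$ are then precisely the relations (\ref{rel1}) and (\ref{Rel2}) with $r=1$ (the second taken with $E=k$; the general-$E$ case of (\ref{Rel2}) is a difference of two $E=k$ generators modulo (\ref{rel1})).

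\textbf{Inductive description.} For $r\geq 2$, Lemma \ref{gen1} identifies $F^r/R^r$ with $\ker\bigl(\Phi_{r-1}:F^{r-1}/R^r\to S_{r-1}(k;A)\bigr)$, and $\Phi_{r-1}\circ\Psi_{r-1}=(r-1)!\cdot\mathrm{id}$ makes $\Phi_{r-1}$ split surjective after $\otimes\mathbb{Z}[\tfrac{1}{(r-1)!}]$, with section $\tfrac{1}{(r-1)!}\Psi_{r-1}$. The resulting short exact sequence
\[0\longrightarrow\frac{F^r}{R^r}\otimes\mathbb{Z}[\tfrac{1}{(r-1)!}]\longrightarrow\frac{F^{r-1}}{R^r}\otimes\mathbb{Z}[\tfrac{1}{(r-1)!}]\stackrel{\Phi_{r-1}}{\longrightarrow}S_{r-1}(k;A)\otimes\mathbb{Z}[\tfrac{1}{(r-1)!}]\longrightarrow 0\]
then splits, giving the decomposition
\[z=\bigl(z-\tfrac{1}{(r-1)!}\Psi_{r-1}\Phi_{r-1}(z)\bigr)+\tfrac{1}{(r-1)!}\Psi_{r-1}\Phi_{r-1}(z)\]
for every $z\in F^{r-1}$, with the first summand lying in $F^r/R^r\otimes\mathbb{Z}[\tfrac{1}{(r-1)!}]$ (apply $\Phi_{r-1}$ to see it is killed). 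Using the explicit formula of Proposition \ref{psi2} to lift $\Psi_{r-1}$ to a map landing in $G^{r-1}\subset F^{r-1}$, these first summands become genuine elements of $F^r\otimes\mathbb{Z}[\tfrac{1}{(r-1)!}]$, and together with $R^r\otimes\mathbb{Z}[\tfrac{1}{(r-1)!}]$ they generate it.

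\textbf{Main obstacle.} The only substantive computation is $\Psi_1\circ\Phi_1=\mathrm{id}$ on $F^1$ in the base case; it requires scrupulous tracking of residue fields through the conventions of Section 2 and uses the degree-zero condition crucially in the telescoping $\bigl(\sum_i n_i[k(a_i):k]\bigr)[0]=0$. Everything else is a formal consequence of the splitting of the short exact sequence furnished by Lemma \ref{gen1}.
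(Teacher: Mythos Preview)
Your proof is correct and follows essentially the same approach as the paper: in the base case you verify $\Psi_1\circ\Phi_1=\mathrm{id}$ on $F^1$ by direct computation on degree-zero cycles (the paper does the equivalent check on the generators $\Tr_{L/k}([a]_L-[0]_L)$), and for the inductive step your split short exact sequence argument is a reformulation of the paper's double-inclusion argument for $B^r=F^r\otimes\mathbb{Z}[\tfrac{1}{(r-1)!}]$. The paper does not explicitly remark that the general-$E$ case of relation~(\ref{Rel2}) at $r=1$ reduces to the $E=k$ case, but your observation there is correct.
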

\begin{proof}
\underline{Step 1:} Compute generators of $F^{2}$: We already have an inclusion $R^{2}\subset F^{2}$ and an isomorphism $\Phi_{r}:F^{1}/F^{2}\simeq S_{1}(k;A)$. To show that $R^{2}\supset F^{2}$, it suffices therefore to prove commutativity of the following diagram.
\[ \xymatrix{
&(F^{1}/R^{2})\ar[d]^{1} \ar[r]^{\Phi_{1}}   & S_{1}(k;A)\ar[ld]_{\Psi_{1}}\\
&  (F^{1}/R^{2}).
}
\] (Notice that by lemma \ref{gen1} we have an equality $\Phi_{1}\circ\Psi_{1}=1$).
We need to verify this only for the generators of $F^{1}/R^{2}$, namely for $\Tr_{L/k}([a]-[0]_{L})$ with $a\in A(L)$. We have: $$\Psi_{1}\circ\Phi_{1}(\Tr_{L/k}([a]-[0]_{L}))=\Psi_{1}(\{a\}_{L/k})=\Tr_{L/k}([a]-[0]_{L}).$$
\underline{Step 2:}
Let $r\geq 3$. Consider the group $$B^{r}:=R^{r}\otimes\mathbb{Z}[\frac{1}{(r-1)!}]+
<z-\frac{1}{(r-1)!}\Psi_{r-1}\circ\Phi_{r-1}(z):z\in F^{r-1}>.$$ We want to show that
$\displaystyle F^{r}\otimes\mathbb{Z}[\frac{1}{(r-1)!}]=B^{r}$.\\
\underline{proof of $(\supset)$:} We already know $\displaystyle R^{r}\otimes\mathbb{Z}[\frac{1}{(r-1)!}]\subset F^{r}\otimes\mathbb{Z}[\frac{1}{(r-1)!}]$ (lemma \ref{gen1}). Moreover, if $z$ is any element of $F^{r-1}$, then \begin{eqnarray*}\Phi_{r-1}(z-\frac{1}{(r-1)!}\Psi_{r-1}\circ\Phi_{r-1}(z))&&=\Phi_{r-1}(z)-\Phi_{r-1}(\frac{1}{(r-1)!}
\Psi_{r-1}\circ\Phi_{r-1}(z))
\\&&=\Phi_{r-1}(z)-(\Phi_{r-1}\circ\frac{1}{(r-1)!}\Psi_{r-1})(\Phi_{r-1}(z))\\&&=\Phi_{r-1}(z)-\Phi_{r-1}(z)=0.
\end{eqnarray*} Thus, $\displaystyle z-\frac{1}{(r-1)!}\Psi_{r-1}\circ\Phi_{r-1}(z)\in\ker\Phi_{r-1}\cap F^{r-1}\otimes\mathbb{Z}[\frac{1}{(r-1)!}]$, which by definition is  $\displaystyle F^{r}\otimes\mathbb{Z}[\frac{1}{(r-1)!}]$.\\
\\
\underline{proof of $(\subset)$:}. Since the group $B^{r}$ contains $\displaystyle R^{r}\otimes\mathbb{Z}[\frac{1}{(r-1)!}]$, lemma (\ref{gen1}) implies that the map $$\Psi_{r-1}:S_{r-1}(k;A)\otimes\mathbb{Z}[\frac{1}{(r-1)!}]\rightarrow \frac{F^{r-1}\otimes\mathbb{Z}[\frac{1}{(r-1)!}]}{B^{r}}$$ is well defined and $\displaystyle\Phi_{r-1}\circ \frac{1}{(r-1)!}\Psi_{r-1}$ is the identity map. To complete the argument, it suffices to show that
$$\frac{1}{(r-1)!}\Psi_{r-1}\circ \Phi_{r-1}:
\frac{F^{r-1}\otimes\mathbb{Z}[\frac{1}{(r-1)!}]}{B^{r}}\rightarrow \frac{F^{r-1}\otimes\mathbb{Z}[\frac{1}{(r-1)!}]}{B^{r}}$$ is also the identity. This follows from the definition of $B^{r}$. Namely by definition, if  $z\in F^{r-1}$, then
$\displaystyle z-\frac{1}{(r-1)!}\Psi_{r-1}\circ\Phi_{r-1}(z)\in B^{r}$.

\end{proof}
\begin{rem}
Notice that the proposition (\ref{gen2}) describes a recursive algorithm to compute generators of the groups $\displaystyle F^{r}\otimes\mathbb{Z}[\frac{1}{(r-1)!}]$, for $r\geq 3$. Namely, having computed a complete set of generators of $\displaystyle F^{r}\otimes\mathbb{Z}[\frac{1}{(r-1)!}]$, the formula $\displaystyle F^{r+1}\otimes\mathbb{Z}[\frac{1}{r!}]=R^{r+1}\otimes\mathbb{Z}[\frac{1}{r!}]+
<z-\frac{1}{r!}\Psi_{r}\circ\Phi_{r}(z):z\in F^{r}>$ allows us to compute generators of $\displaystyle F^{r+1}\otimes\mathbb{Z}[\frac{1}{r!}]$. As an example, we compute below a set of generators of the group $\displaystyle F^{3}\otimes\mathbb{Z}[\frac{1}{2!}]$.
\end{rem}\medskip
\underline{Generators of $\displaystyle F^{3}\otimes\mathbb{Z}[\frac{1}{2!}]$:} According to proposition \ref{gen2}, we have the following families of generators:
\begin{enumerate}\item The ones that come from $\displaystyle R^{3}\otimes\mathbb{Z}[\frac{1}{2!}]$, namely:
\begin{enumerate}\item $\displaystyle\frac{1}{2^{m}}(\Tr_{k'/k}([a+b+c]_{k'}-[a+b]_{k'}-[a+c]_{k'}-[b+c]_{k'}+[a]_{k'}+[b]_{k'}+[c]_{k'}-[0]_{k'}))$, where $a,\;b,\;c\in A(k')$, and $m\geq 0$.
\item\label{rel2}
\begin{eqnarray*}&&\displaystyle\frac{1}{2^{m}}(\Tr_{E/k}([a+\Tr_{L/E}(b)]_{E}-[a]_{E}-
[\Tr_{L/E}(b)]_{E}+[0]_{E})-\\&&
    (\Tr_{L/k}([\res_{L/E}(a)+b]_{L}-[\res_{L/E}(a)]_{L}-
    [b]_{L}+[0]_{L}))),
\end{eqnarray*} where $L\supset E\supset k$ is a tower of finite extensions, $a\in A(E)$, $b\in A(L)$ and $m\geq 0$.
    \end{enumerate}
\item The ones that come from $\displaystyle z-\frac{1}{2}\Psi_{2}\circ\Phi_{2}(z)$ with $z\in F^{2}$.\\ Notice that if $z\in G^{2}$, then $\displaystyle z-\frac{1}{2}\Psi_{2}\circ\Phi_{2}(z)=0$, thus no new generator is obtained in this way. The only remaining generating family is of the form
$$\displaystyle\frac{1}{2^{m}}([\Tr_{L/k}(a)]-[0]-\Tr_{L/k}([a]_{L}-[0]_{L})-
\frac{1}{2}\Psi_{2}\Phi_{2}([\Tr_{L/k}(a)]-[0]-\Tr_{L/k}([a]_{L}-[0]_{L}))),$$ where $L\supset k$ is a finite  extension, $a\in A(L)$ and $m\geq 0$.
\end{enumerate}

\vspace{7pt}
\section{A cycle map to Galois cohomology}
In this section we recall the definition of the Somekawa map (\cite{Som}) which will in turn induce a cycle map to Galois cohomology. Let $n$ be an integer invertible in $k$ and $A$ an abelian variety  over $k$. We consider the connecting homomorphism $\delta:A/nA\rightarrow H^{1}(k,A[n])$ of the Kummer sequence of $A$,
$$0\longrightarrow A[n]\longrightarrow A\stackrel{\cdot n}{\longrightarrow}A\longrightarrow 0.$$ We will denote by $\cup$ the cup product pairing on $H^{\star}(k,A[n]^{\otimes \star})$. The Somekawa map is defined as follows:
\begin{eqnarray*}&&\frac{K_{r}(k;A)}{n}\stackrel{s_{n}}{\longrightarrow}H^{r}(k,A[n]^{\otimes r})\\
&&\{a_{1},\dots,a_{r}\}_{k'/k}\longrightarrow \Cor_{k'/k}(\delta(a_{1})\cup\dots\cup\delta(a_{r})),
\end{eqnarray*} where by $\Cor_{k'/k}$ we denote the correstriction map of Galois cohomology $H^{r}(k',A[n]^{\otimes r})\rightarrow H^{r}(k,A[n]^{\otimes r})$.
\begin{defn} Let $r\geq 1$ be a positive integer. We define the wedge product $\bigwedge^{r}A[n]$ as the cokernel of the map $0\rightarrow Sym^{r}(A[n])\rightarrow A[n]^{\otimes r}$, where $Sym^{r}(A[n])$ is the subgroup of $A[n]^{\otimes r}$ fixed by the action of $\Sigma_{r}$.
\end{defn}
\begin{prop} Let $A$ be an abelian variety over $k$ and let $n$ be an integer which is invertible in $k$. Then the Somekawa map induces
$$\frac{S_{r}(k;A)}{n}\stackrel{s_{n}}{\longrightarrow}H^{r}(k,\bigwedge^{r}A[n]).$$
\end{prop}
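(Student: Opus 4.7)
The plan is to form the composition $K_{r}(k;A)/n \xrightarrow{s_{n}} H^{r}(k,A[n]^{\otimes r}) \xrightarrow{q_{*}} H^{r}(k,\bigwedge^{r}A[n])$, where $q_{*}$ is induced by the defining quotient $q\colon A[n]^{\otimes r}\twoheadrightarrow \bigwedge^{r}A[n]$, and to show that this composition factors through $S_{r}(k;A)/n$. Concretely, I must verify that for every finite extension $k'/k$, every $\sigma\in\Sigma_{r}$, and every tuple $(a_{1},\dots,a_{r})\in A(k')^{r}$, the class
\[
s_{n}\bigl(\{a_{1},\dots,a_{r}\}_{k'/k}-\{a_{\sigma(1)},\dots,a_{\sigma(r)}\}_{k'/k}\bigr)
\]
vanishes in $H^{r}(k,\bigwedge^{r}A[n])$. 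Since $q_{*}$ commutes with the corestriction $\Cor_{k'/k}$, this reduces to showing that the difference $c-c_{\sigma}$ is killed in $H^{r}(k',\bigwedge^{r}A[n])$, where $c:=\delta(a_{1})\cup\cdots\cup\delta(a_{r})$ and $c_{\sigma}:=\delta(a_{\sigma(1)})\cup\cdots\cup\delta(a_{\sigma(r)})$.

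The key input is graded commutativity of the cup product in Galois cohomology: since each $\delta(a_{i})$ lies in $H^{1}(k',A[n])$, the identity $\alpha\cup\beta=-T_{*}(\beta\cup\alpha)$ for $\alpha,\beta\in H^{1}$ (with $T\colon A[n]\otimes A[n]\to A[n]\otimes A[n]$ the swap of factors), combined with any expression of $\sigma$ as a product of adjacent transpositions, yields
\[
c_{\sigma}=\mathrm{sgn}(\sigma)\,\sigma_{*}(c)\quad\text{in } H^{r}(k',A[n]^{\otimes r}),
\]
where $\sigma_{*}$ is the automorphism of $A[n]^{\otimes r}$ permuting tensor factors by $\sigma$. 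Hence the obstruction we must kill is $(1-\mathrm{sgn}(\sigma)\sigma_{*})(c)$.

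To finish, I would use the long exact sequence attached to the defining short exact sequence $0\to\Sym^{r}A[n]\to A[n]^{\otimes r}\to\bigwedge^{r}A[n]\to 0$: the kernel of $q_{*}$ is precisely the image of $H^{r}(k',\Sym^{r}A[n])$. Reducing to the generating case of $\sigma$ a transposition, the remaining task is to lift $c+\sigma_{*}(c)$ to a class in $H^{r}(k',\Sym^{r}A[n])$. For $r=2$ this is automatic, because the coefficient map $v\mapsto v+\sigma(v)$ already factors through $\Sym^{2}A[n]=(A[n]^{\otimes 2})^{\Sigma_{2}}$, so the lift is induced functorially and the argument concludes. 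The main obstacle is the general case $r\geq 3$, where $v+\sigma(v)$ is only fixed by the single transposition $\sigma$ and not by all of $\Sigma_{r}$, so the straightforward coefficient-level symmetrization no longer lands in $\Sym^{r}A[n]$; I expect the proof to circumvent this by exploiting the specific multilinear structure of $c$ as an iterated cup product of degree-one classes, producing the desired lift via an explicit cocycle-level symmetrization that uses the graded commutativity relation simultaneously across all $r$ tensor positions rather than only the two that are swapped.
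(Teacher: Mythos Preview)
Your approach is the same as the paper's: compose $s_n$ with the projection $p_{\wedge}\colon H^{r}(k,A[n]^{\otimes r})\to H^{r}(k,\bigwedge^{r}A[n])$, reduce to adjacent transpositions $\tau=(i,i+1)$, and invoke graded commutativity of the cup product to get $c_{\tau}=-\tau_{*}(c)$. The paper then finishes in one line by asserting the commutative square
\[
\xymatrix{
H^{r}(k,A[n]^{\otimes r})\ar[r]^{\tau_{*}} \ar[d]^{p_{\wedge}} & H^{r}(k,A[n]^{\otimes r}) \ar[d]^{p_{\wedge}}\\
H^{r}(k,\textstyle\bigwedge^{r}A[n])\ar[r]^{-1} & H^{r}(k,\textstyle\bigwedge^{r}A[n]),
}
\]
i.e.\ that $\tau$ acts by $-1$ on $\bigwedge^{r}A[n]$; combining gives $p_{\wedge}(c_{\tau})=p_{\wedge}(-\tau_{*}(c))=p_{\wedge}(c)$. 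No lifting via the long exact sequence, and no use of the iterated cup-product structure of $c$ beyond the graded-commutativity step, is needed.

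The difficulty you isolate for $r\geq 3$ is not a defect of your argument but of the paper's non-standard definition of $\bigwedge^{r}A[n]$ as the cokernel of the inclusion of invariants $(A[n]^{\otimes r})^{\Sigma_{r}}\hookrightarrow A[n]^{\otimes r}$. With that definition your observation is correct: $v+\tau(v)$ is only fixed by $\langle\tau\rangle$, so $\tau$ does \emph{not} in general act by $-1$ on the quotient, and the diagram above does not commute on the nose. The argument the paper has in mind works verbatim once $\bigwedge^{r}$ is taken in the usual sense (the sign-coinvariants of $A[n]^{\otimes r}$, equivalently the quotient by tensors with a repeated factor), where $\sigma$ acts by $\mathrm{sgn}(\sigma)$ by construction. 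So the right fix is simply to use the standard exterior power; no ``explicit cocycle-level symmetrization'' is required.
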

\begin{proof} The projection $A[n]^{\otimes r}\stackrel{p_{\wedge}}{\longrightarrow}\bigwedge^{r}A[n]$ induces a morphism
$H^{r}(k,A[n]^{\otimes r})\stackrel{p_{\wedge}}{\longrightarrow} H^{r}(k,\bigwedge^{r}A[n])$. Let $\{a_{1},...,a_{r}\}_{L/k}$ be any symbol in $K_{r}(k;A)$ and let $\sigma\in\Sigma_{r}$ be any permutation of the set $\{1,...,r\}$. We need to show that
$p_{\wedge}\circ s_{n}(\{a_{1},...,a_{r}\}_{L/k})=p_{\wedge}\circ s_{n}(\{a_{\sigma(1)},...,a_{\sigma(r)}\}_{L/k})$. Since any permutation $\sigma$ can be written as a product of transpositions of the form $\tau=(i,i+1)$, it suffices to show that for all $i\in\{1,...,r-1\}$, $$p_{\wedge}\circ s_{n}(\{a_{1},\dots,a_{i},a_{i+1},\dots,a_{r}\}_{L/k})=p_{\wedge}\circ s_{n}(\{\{a_{1},\dots,a_{i+1},a_{i},\dots,a_{r}\}_{L/k}).$$ We consider the map
\begin{eqnarray*}&&t:A[n]\otimes A[n]\otimes\dots\otimes A[n]\rightarrow A[n]\otimes A[n]\otimes\dots\otimes A[n]\\
&&a_{1}\otimes\dots \otimes a_{i}\otimes a_{i+1}\otimes \dots\otimes a_{r}\rightarrow a_{1}\otimes\dots\otimes  a_{i+1}\otimes a_{i}\otimes \dots\otimes a_{r}.
\end{eqnarray*} Then for the induced map $t_{\star}:H^{r}(k,A[n]^{\otimes^{r}})\rightarrow H^{r}(k,A[n]^{\otimes^{r}})$ it holds $$\Cor_{k'/k}(\delta(a_{1})\cup\dots\cup\delta(a_{i+1})\cup\delta(a_{i})\cup\dots\cup\delta(a_{r}))=
-t_{\star}(\Cor_{k'/k}(\delta(a_{1})\cup\dots\cup\delta(a_{i}))
\cup\delta(a_{i+1})\cup\dots\cup\delta(a_{r})).$$ (The last equality is a general fact about cup products in group cohomology. For a proof, we refer to \cite{Br}, p.111). Next notice that the following diagram is commutative:
\[ \xymatrix{
&H^{r}(k,\otimes^{r}A[n])\ar[r]^{t_{\star}} \ar[d]^{p_{\wedge}}   & H^{r}(k,\otimes^{r}A[n]) \ar[d]^{p_{\wedge}}\\
& H^{r}(k,\bigwedge^{r}A[n])\ar[r]^{-1} & H^{r}(k,\bigwedge^{r}A[n])\\
}
\]
To conclude, we have:
\begin{eqnarray*}p_{\wedge}(\Cor_{k'/k}(\delta(a_{1})\cup\dots\cup\delta(a_{i+1})\cup\delta(a_{i})
\cup\dots\cup\delta(a_{r})))=\\
p_{\wedge}(-t_{\star}(\Cor_{k'/k}(\delta(a_{1})\cup\dots\cup\delta(a_{i})\cup\delta(a_{i+1})\cup\dots\cup\delta(a_{r})))=\\
p_{\wedge}(\Cor_{k'/k}(\delta(a_{1})\cup\dots\cup\delta(a_{i})\cup\delta(a_{i+1})\cup\dots\cup\delta(a_{r}))).
\end{eqnarray*} The result now follows.
\end{proof}
\medskip
\begin{cor}\label{gal} For any integer $n$ invertible in $k$ and any $r>0$, the Somekawa map and the map $\Phi_{r}$ induce a cycle map to Galois cohomology: $$\frac{F^{r}CH_{0}(A)/F^{r+1}CH_{0}(A)}{n}\longrightarrow H^{r}(k,\bigwedge^{r}A[n]).$$
\end{cor}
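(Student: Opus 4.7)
The plan is to construct the cycle map as a composition of two homomorphisms that have already been built (or almost built) earlier in the paper. By Definition \ref{fil} we have $F^{r+1} = F^{r} \cap \ker \Phi_{r}$, so the homomorphism $\Phi_{r}:CH_{0}(A) \to S_{r}(k;A)$ of Proposition \ref{phi} restricts to an injective homomorphism
$$\overline{\Phi}_{r}: F^{r}CH_{0}(A)/F^{r+1}CH_{0}(A) \hookrightarrow S_{r}(k;A).$$
This descends to the quotient by $n$: since $-/n$ is right exact, we obtain an induced (not necessarily injective) homomorphism
$$\overline{\Phi}_{r}/n: \bigl(F^{r}CH_{0}(A)/F^{r+1}CH_{0}(A)\bigr)/n \longrightarrow S_{r}(k;A)/n.$$

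Next I would compose this with the Somekawa map $s_{n}: S_{r}(k;A)/n \to H^{r}(k, \bigwedge^{r} A[n])$ provided by the previous proposition. The composition
$$\bigl(F^{r}CH_{0}(A)/F^{r+1}CH_{0}(A)\bigr)/n \xrightarrow{\overline{\Phi}_{r}/n} S_{r}(k;A)/n \xrightarrow{s_{n}} H^{r}\bigl(k,\tbigwedge^{r} A[n]\bigr)$$
is the desired cycle map. There is essentially no obstacle: both ingredients have been established, the only verification being that the maps are compatible with reduction mod $n$, which is automatic because $\Phi_{r}$ is a group homomorphism and $s_{n}$ is by construction defined on the mod-$n$ quotient of the Somekawa $K$-group. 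If one wishes one may also remark that the image of this cycle map encodes the classes $\mathrm{Cor}_{k(a)/k}(\delta(a) \cup \cdots \cup \delta(a))$ on the generators $[a]$ of $F^{r}/F^{r+1}$, but this explicit description is not needed for the statement.
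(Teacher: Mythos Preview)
Your proposal is correct and matches the paper's approach exactly: the paper states this as a corollary without proof, implicitly composing the injection $F^{r}/F^{r+1}\hookrightarrow S_{r}(k;A)$ induced by $\Phi_{r}$ (reduced mod $n$) with the Somekawa map $s_{n}:S_{r}(k;A)/n\to H^{r}(k,\bigwedge^{r}A[n])$ from the preceding proposition. There is nothing to add.
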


\vspace{7pt}

\section{The $p$-adic Case} In all this section we assume that the base field $k$ is a finite extension of $\mathbb{Q}_{p}$, where $p$ is a prime number. Using results of W. Raskind and M. Spiess (\cite{Ras/Sp}), we obtain some divisibility results for our filtration. Furthermore, using the injectivity of the Galois symbol $K_{2}(k;A)/n\hookrightarrow H^{2}(k,A[n]^{\otimes 2})$  in the special case when $A$ has split multiplicative reduction, a result proved by T. Yamazaki  in (\cite{Yam}), we obtain a result for the Brauer group of $A$.
\subsection{Divisibility Results}
\begin{rem} We recall that if  $A$ is an abelian variety over the $p$-adic field $k$ and $\mathcal{A}$ is the N\'{e}ron model of $A$, then we say that $A$ has semi-abelian reduction if the connected  component, $\mathcal{A}_{s}^{0}$ , of the special fiber $\mathcal{A}_{s}$  of $\mathcal{A}$ containing the neutral element fits into a short exact sequence
$$0\rightarrow T\rightarrow \mathcal{A}_{s}^{0}\rightarrow B\rightarrow 0,$$ where $T$ is a torus and $B$ an abelian variety over the residue field $\kappa$ of $k$.
Further, we say that $A$ has split semi-ordinary reduction, if it has semi-abelian reduction with $T$ a split torus and $B$ an ordinary abelian variety.
\end{rem}
Raskind and Spiess obtained the following important result.
\begin{thm} (\cite{Ras/Sp} theorem 4.5) Let $A_{1},...,A_{n}$ be abelian varieties over $k$ with split semi-ordinary reduction. Then for $n\geq 2$, the group $K(k;A_{1},....,A_{n})$ is the direct sum of a finite group $F$ and a divisible group $D$. For $n\geq 3$, the group $K(k;A_{1},....,A_{n})$ is in fact divisible (remark 4.4.5).
\end{thm}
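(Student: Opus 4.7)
The plan is to exploit the N\'eron model of each $A_i$ to decompose $K(k;A_1,\ldots,A_n)$ into pieces according to whether each slot contributes through the ``formal'' part $V(A_i)$ or through the reduction $\mathcal{A}_{i,s}^{0}$, and then to verify the claimed structure piece by piece.

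First I would recall that for each $A_i$ with semi-abelian reduction there is an exact sequence of topological abelian groups
$$0\longrightarrow V(A_i)\longrightarrow A_i(k)\longrightarrow \mathcal{A}_{i,s}^{0}(\kappa)\longrightarrow 0,$$
where $V(A_i)$ is an open finite-index subgroup of $A_i(k)$ isomorphic to the $\mathcal{O}_k$-points of the formal group of $\mathcal{A}_i$, and hence carries a descending filtration by open $\mathcal{O}_k$-submodules whose successive graded pieces are, up to bounded $p$-torsion, free $\mathcal{O}_k$-modules of finite rank. Under the split semi-ordinary hypothesis, $\mathcal{A}_{i,s}^{0}$ is an extension of an ordinary abelian variety $B_i$ over the finite residue field $\kappa$ by a split torus $\mathbb{G}_m^{t_i}$. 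Since $K(k;-,\ldots,-)$ is multilinear and compatible with the Mackey-functor structure in each slot, applying the exact sequence above in every slot (and over every finite extension $k'/k$) induces a finite filtration of $K(k;A_1,\ldots,A_n)$ whose associated graded is a direct sum of Somekawa-style K-groups assembled from the formal and reduction constituents.

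Next I would show that any graded piece with at least one formal slot $V(A_i)$ is divisible. The key point is that $V(A_i)$ is a topological $\mathcal{O}_k$-module, so for every prime $\ell$ (including $\ell=p$) every element admits $\ell$-th roots modulo sufficiently deep layers of the formal filtration; the multilinearity and the specialization/Steinberg-type relations in the $K$-group promote this to genuine divisibility of the whole graded piece as soon as $n\ge 2$, regardless of what the other slots are. The only remaining graded piece is the ``pure reduction'' piece, which via the extension presentation reduces to K-groups built from split tori $\mathbb{G}_m^{t_i}$ and ordinary abelian varieties over $\kappa$; the $\mathbb{G}_m^{t_i}$ factors are accessed through the Somekawa identification with Milnor K-theory of $\kappa$, which vanishes in degrees $\ge 2$ (so for $n\ge 3$ we already win) and is finite in degree $2$, while the ordinary pieces are handled via the canonical lift together with the vanishing $H^r(\kappa,-)=0$ for $r\ge 2$ over a finite field.

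For $n\ge 3$ this shows every graded piece is divisible, and after checking that an extension of divisibles remains divisible (valid here because the filtration has finite length and the graded pieces have uniformly bounded exponent in their non-divisible torsion), one concludes divisibility of the full group. For $n\ge 2$ only the pure reduction piece can contribute a non-divisible summand, and that summand is finite. The main obstacle will be controlling the extensions in the N\'eron filtration: since the short exact sequence displayed above is not split, one must verify that the induced filtration on $K(k;A_1,\ldots,A_n)$ really has the predicted associated graded and that a finite obstruction sitting in one graded piece is not amplified through the extensions into a non-finite obstruction in the total group. This is achieved by a corank computation combined with the injectivity of the Somekawa Galois symbol $s_m$ in the split multiplicative/ordinary setting, which simultaneously identifies the finite summand $F$ and bounds its exponent uniformly in $m$, thereby completing the splitting $K(k;A_1,\ldots,A_n)\cong F\oplus D$.
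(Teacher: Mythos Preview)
This theorem is not proved in the paper at all: it is quoted verbatim from \cite{Ras/Sp} (Theorem~4.5 and Remark~4.4.5 there) and then used as a black box to deduce Corollary~\ref{div}. There is therefore no ``paper's own proof'' to compare against; the paper simply cites the result.

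Your sketch does track, in broad outline, the strategy of Raskind and Spiess: they too work with the N\'eron model, use the Raynaud extension/formal part to get divisibility of the pieces involving the formal group, and reduce the ``pure reduction'' piece to statements about tori and the ordinary abelian part over the residue field. That said, several of your steps are imprecise in ways that would not survive as a proof. The exact sequence you write, $0\to V(A_i)\to A_i(k)\to \mathcal{A}_{i,s}^{0}(\kappa)\to 0$, is not literally correct (one must account for the component group and work with $A_i(k')$ for all finite $k'/k$ simultaneously, since the Somekawa group is built from all such extensions). More seriously, the Somekawa $K$-group is \emph{not} functorial in short exact sequences of the constituent groups in a way that yields a filtration with the associated graded you describe; Raskind and Spiess instead work with Mackey functors and a carefully constructed ``Somekawa-type'' $K$-group for Mackey functors, proving comparison isomorphisms along the way. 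Your final paragraph acknowledges exactly this obstruction but then invokes ``injectivity of the Somekawa Galois symbol $s_m$ in the split multiplicative/ordinary setting'' to resolve it; this is circular, since that injectivity (in the cases where it is known) is itself deduced in \cite{Ras/Sp} and \cite{Yam} from the structure theorem you are trying to prove, not the other way around.

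In short: there is nothing to compare against in this paper, and if your goal is to supply the missing proof, you should consult \cite{Ras/Sp} directly, where the Mackey-functor formalism needed to make your filtration argument rigorous is developed in full.
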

Thus, in our set up, if we assume that the abelian variety $A$ has split semi-ordinary reduction, then theorem \ref{Iso} has the following corollary:
\begin{cor}\label{div} Let $A$  be an abelian variety over a $p$-adic field $k$ having split semi-ordinary reduction. Then for the filtration defined above, it holds:
\begin{enumerate}\item For $r\geq 3$, the groups $F^{r}/F^{r+1}$ are divisible.
\item The group $\displaystyle F^{2}/F^{3}\otimes\mathbb{Z}[\frac{1}{2}]$ is a direct sum of a divisible group and a finite group.
\end{enumerate}
\end{cor}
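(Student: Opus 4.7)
The plan is to deduce the corollary almost immediately from Theorem \ref{Iso} by transporting the structural information that Raskind--Spiess provide for $K_r(k;A)$ to the symmetric quotient $S_r(k;A)$, and then to $F^r/F^{r+1}$. The starting observation is that, by construction, $F^{r+1} = F^r \cap \ker\Phi_r$, so the map
$$\Phi_r : F^r/F^{r+1} \longrightarrow S_r(k;A)$$
is injective, and, by Proposition \ref{psi}, the map $\Psi_r$ composed with $\Phi_r$ gives multiplication by $r!$ on $S_r(k;A)$. In particular, $r!\, S_r(k;A) \subset \mathrm{Im}(\Phi_r)$.

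For part (1), assume $r \geq 3$. By the theorem of Raskind--Spiess, $K_r(k;A)$ is divisible, and since $S_r(k;A)$ is a quotient of $K_r(k;A)$, it is divisible as well. Divisibility gives $r!\, S_r(k;A) = S_r(k;A)$, so the inclusion $r!\, S_r(k;A) \subset \mathrm{Im}(\Phi_r)$ forces $\Phi_r$ to be surjective. Together with injectivity, this upgrades $\Phi_r : F^r/F^{r+1} \to S_r(k;A)$ to an honest isomorphism (without inverting $r!$), and hence $F^r/F^{r+1}$ inherits divisibility from $S_r(k;A)$.

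For part (2), Raskind--Spiess give a decomposition $K_2(k;A) \cong F \oplus D$ with $F$ finite and $D$ divisible. Pushing through the surjection $K_2(k;A) \twoheadrightarrow S_2(k;A)$, the image of $D$ is a divisible subgroup $D' \subset S_2(k;A)$, and the quotient $S_2(k;A)/D'$ is a quotient of $F$, hence finite. Because any divisible subgroup of an abelian group is a direct summand, we get $S_2(k;A) \cong D' \oplus F'$ with $F'$ finite and $D'$ divisible. Tensoring with $\mathbb{Z}[\tfrac{1}{2}]$ preserves both properties (a divisible group stays divisible; a finite group stays finite after killing its 2-primary part), and then Theorem \ref{Iso} provides
$$F^2/F^3 \otimes \mathbb{Z}[\tfrac{1}{2}] \;\stackrel{\Phi_2}{\simeq}\; S_2(k;A) \otimes \mathbb{Z}[\tfrac{1}{2}] \;\cong\; (D' \otimes \mathbb{Z}[\tfrac{1}{2}]) \,\oplus\, (F' \otimes \mathbb{Z}[\tfrac{1}{2}]),$$
which is the desired decomposition.

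Once the passage from $K_r(k;A)$ to $S_r(k;A)$ is in place, there is no real obstacle — the only step that requires a little care is verifying that direct sum (finite $\oplus$ divisible) structure survives passing to the symmetric quotient, and this is handled by the general fact that divisible subgroups always split off. Everything else is a bookkeeping consequence of the injectivity of $\Phi_r$ on $F^r/F^{r+1}$ and the equation $\Phi_r \circ \Psi_r = r!$.
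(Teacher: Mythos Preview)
Your proof is correct and follows the same approach as the paper's: for $r\geq 3$ you use divisibility of $S_r(k;A)$ to upgrade the injection $\Phi_r$ to an isomorphism, and for $r=2$ you invoke Theorem~\ref{Iso} after tensoring with $\mathbb{Z}[\tfrac{1}{2}]$. The paper's proof is more terse and does not spell out why the ``finite $\oplus$ divisible'' structure descends from $K_2(k;A)$ to the quotient $S_2(k;A)$; your argument via the splitting of divisible subgroups fills in that detail explicitly.
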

\begin{proof} Everything follows directly from theorem \ref{Iso}, once we notice that for $r\geq 3$ the divisibility of $S_{r}(k;A)$ yields an equality $S_{r}(k;A)=r!S_{r}(k;A)$. Thus, the injective map $F^{r}/F^{r+1}\stackrel{\Phi_{r}}{\hookrightarrow}S_{r}(k;A)$ is also surjective.
\end{proof}
\medskip

\vspace{7pt}
\subsection{The Brauer group}
In this section we compute the kernel of the map $$CH_{0}(A)\otimes\Z[\frac{1}{2}]\rightarrow Br(A)^{\star}\otimes\Z[\frac{1}{2}]$$ induced by the Brauer-Manin pairing, in the special case when the abelian variety $A$ has split multiplicative reduction. First, we review some definitions.
\begin{enumerate}
\item Let $X$ be a smooth, projective, geometrically connected variety over the $p$-adic field $k$. By the Brauer group of $X$ we will always mean the group $H^{2}(X_{\et},\mathbb{G}_{m})$ and we will denote it by $Br(X)$. There is a well defined pairing of abelian groups $$<,>_{X}:CH_{0}(X)\times Br(X)\rightarrow\mathbb{Q/\Z}$$ defined as follows. If $\alpha\in Br(X)$ is an element of the Brauer group and $x\in X$ a closed point of $X$, then the closed immersion $\iota_{x}:\Spec(k(x))\rightarrow X$ induces the pullback $\iota_{x}^{\star}:Br(X)\rightarrow Br(k(x))$. We define $$<x,\alpha>_{X}=\Cor_{k(x)/k}(\iota_{x}^{\star}(\alpha))\in Br(k)\simeq\mathbb{Q}/\Z,$$ where $\Cor_{k(x)/k}:Br(k(x))\to Br(k)$ is the Correstriction map and the isomorphism $Br(k)\simeq\mathbb{Q}/\Z$ is via the invariant map of local class field theory. To show that this definition factors through rational equivalence, we reduce to the case of curves, where the well definedness of the pairing follows by a result of S. Lichtenbaum, \cite{Licht}. \\
\item We say that an abelian variety $A$ of dimension $d$ over $k$ has split multiplicative reduction, if the connected component $\mathcal{A}_{s}^{0}$, containing the neutral element of the special fiber $\mathcal{A}_{s}$ of the N\'{e}ron model $\mathcal{A}$ of $A$ is a split torus. In this case, the theory of degeneration of abelian varieties (\cite{Fal}, Chapter III, Proposition 8.1) yields that there exists a split torus $T\simeq \mathbb{G}_{m}^{\oplus d}$ over $k$ and a finitely generated free abelian group $L\subset T(k)$ of rank $d$, such that for any finite extension $k'/k$, there is an isomorphism $A(k')\simeq T(k')/L$.
\end{enumerate}
\medskip
We will first prove a lemma which holds for any smooth, projective, geometrically connected variety $X$ over $k$, which connects the pairing $<,>_{X}$ and the usual cycle map to \'{e}tale cohomology,
$$\rho_{X,n}:CH_{0}(X)/n\rightarrow H^{2d}(X,\mathbb{Z}/n(d)).$$
\underline{Notation:} We denote by $\mathbb{Z}/n(d)$ the abelian sheaf $\mu_{n}^{\otimes d}$ on $X_{\et}$.
\begin{lem}\label{ya} Let $X$ be a smooth, projective, geometrically connected variety over $k$. There is a commutative diagram
\[ \xymatrix{
& CH_{0}(X)/n \ar[r]_{\rho_{X,n}}\ar[d]_{<,>_{X}}   &   H^{2d}(X_{\et},\mathbb{Z}/n(d))\ar[d]\\
& (Br(X)[n])^{\star}\ar[r] & (H^{2}(X,\mu_{n}))^{\star}. \\
}
\]
\end{lem}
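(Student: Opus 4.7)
The plan is to make the right vertical arrow explicit, then reduce the commutativity to the projection formula for the cycle class map together with the compatibility between the trace map in \'etale cohomology and the corestriction in Galois cohomology.

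\textbf{Step 1: Define the right vertical arrow.} The Kummer sequence gives an inclusion $Br(X)[n] \hookrightarrow H^{2}(X_{\et},\mu_{n})$, so $(H^{2}(X,\mu_{n}))^{\star}$ naturally maps to $(Br(X)[n])^{\star}$; what we really need is to produce the arrow from $H^{2d}(X_{\et},\mathbb{Z}/n(d))$ to $(H^{2}(X,\mu_{n}))^{\star}$. This is the adjoint of the cup product pairing
\begin{equation*}
H^{2d}(X_{\et},\mathbb{Z}/n(d)) \otimes H^{2}(X_{\et},\mu_{n}) \xrightarrow{\cup} H^{2d+2}(X_{\et},\mu_{n}^{\otimes(d+1)}) \xrightarrow{\mathrm{tr}_{X/k}} H^{2}(k,\mu_{n}) \xrightarrow{\mathrm{inv}_{k}} \tfrac{1}{n}\mathbb{Z}/\mathbb{Z},
\end{equation*}
where $\mathrm{tr}_{X/k}$ is the trace coming from the Leray spectral sequence for $X\to\Spec k$ combined with Poincar\'e duality $H^{2d}(X_{\bar{k}},\mu_{n}^{\otimes(d+1)})\simeq\mu_{n}$, and $\mathrm{inv}_{k}$ is the invariant map of local class field theory.

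\textbf{Step 2: Reduce to closed points.} Since $CH_{0}(X)/n$ is generated by classes $[x]$ of closed points $x\in X$, it suffices to check commutativity on each such generator. For a closed point $x$ with residue field $k(x)$, the cycle class map satisfies $\rho_{X,n}([x])=(\iota_{x})_{\star}(1)$, where
\begin{equation*}
(\iota_{x})_{\star}\colon H^{0}(k(x),\mathbb{Z}/n)\longrightarrow H^{2d}(X_{\et},\mathbb{Z}/n(d))
\end{equation*}
is the Gysin pushforward associated to the proper closed immersion $\iota_{x}\colon\Spec k(x)\hookrightarrow X$.

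\textbf{Step 3: Apply the projection formula.} For any $\bar{\alpha}\in H^{2}(X_{\et},\mu_{n})$ lifting $\alpha\in Br(X)[n]$, the projection formula for Gysin maps yields
\begin{equation*}
\rho_{X,n}([x])\cup\bar{\alpha} = (\iota_{x})_{\star}(1)\cup\bar{\alpha} = (\iota_{x})_{\star}\bigl(\iota_{x}^{\star}\bar{\alpha}\bigr) \in H^{2d+2}(X_{\et},\mu_{n}^{\otimes(d+1)}).
\end{equation*}

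\textbf{Step 4: Identify the trace with the corestriction.} The remaining point is that the composition
\begin{equation*}
H^{2}(k(x),\mu_{n}) \xrightarrow{(\iota_{x})_{\star}} H^{2d+2}(X_{\et},\mu_{n}^{\otimes(d+1)}) \xrightarrow{\mathrm{tr}_{X/k}} H^{2}(k,\mu_{n})
\end{equation*}
coincides with the Galois-cohomological corestriction $\Cor_{k(x)/k}$; this is a standard compatibility for traces of proper pushforwards, reducing via the factorization $\Spec k(x)\hookrightarrow X\to\Spec k$ to the identity $\mathrm{tr}_{\Spec k(x)/k}=\Cor_{k(x)/k}$. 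Composing with $\mathrm{inv}_{k}$ then produces exactly $\langle x,\alpha\rangle_{X}$ as defined in part (1) of the introductory discussion, which completes the check.

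\textbf{Main obstacle.} The computational content is entirely in Step 4, namely verifying that the Leray-theoretic trace $\mathrm{tr}_{X/k}$ is compatible with the proper pushforward along $\iota_{x}$ in the sense that the composite collapses to the corestriction $\Cor_{k(x)/k}$. This is folklore but requires unwinding the definition of the Gysin map together with the construction of $\mathrm{tr}_{X/k}$ via Poincar\'e duality on $X_{\bar{k}}$; once it is in place, the diagram commutes on generators by Steps 2--3.
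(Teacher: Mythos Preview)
Your proposal is correct and follows essentially the same route as the paper's proof: both reduce to closed points, use that $\rho_{X,n}([x])$ is the Gysin image of $1\in H^{0}(x,\mathbb{Z}/n)$, and then invoke the compatibility between Gysin pushforward and the duality pairing. The paper packages Steps~3--4 as the commutativity of the square
\[
\xymatrix{
H^{0}(x,\mathbb{Z}/n)\ar[r]^{G_{x}}\ar[d]_{\simeq} & H^{2d}(X,\mathbb{Z}/n(d))\ar[d]\\
(H^{2}(x,\mu_{n}))^{\star}\ar[r]^{\iota_{x}^{\star}} & (H^{2}(X,\mu_{n}))^{\star}
}
\]
(citing Saito's Tate--Poincar\'e duality for the vertical isomorphisms) and then passes to $Br[n]$ via the Kummer sequence, whereas you unwind this square explicitly as the projection formula together with $\mathrm{tr}_{X/k}\circ(\iota_{x})_{\star}=\Cor_{k(x)/k}$; these are the same content.
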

\begin{proof} First we observe that Tate and Poincar\'{e} duality induce a non-degenerate pairing of finite abelian groups $$H^{2}(X,\mu_{n})\times H^{2d}(X,\mathbb{Z}/n(d))\rightarrow \mathbb{Z}/n.$$ (see \cite{Saito} for a proof of this statement, due to S. Saito). Notice that this pairing induces the right vertical map of the diagram stated in the lemma, $H^{2d}(X,\mathbb{Z}/n(d))\stackrel{\simeq}{\longrightarrow}H^{2}(X,\mu_{n})^{\star}$, which is therefore an isomorphism. \\
Let now $x$ be a closed point of $X$. We obtain a commutative diagram
\[ \xymatrix{
& H^{0}(x,\mathbb{Z}/n) \ar[r]^{G_{x}}\ar[d]_{\simeq}   &   H^{2d}(X_{\et},\mathbb{Z}/n(d))\ar[d]\\
& (H^{2}(x,\mu_{n}))^{\star}\ar[r]^{\iota_{x}^{\star}} & (H^{2}(X,\mu_{n}))^{\star}, \\
}
\] where the left vertical map is the isomorphism induced by Tate duality, $H^{2}(X,\mu_{n})\stackrel{\iota_{x}^{\star}}{\longrightarrow} H^{2}(x,\mu_{n})$ is the pullback map and $G_{x}$ is the Gysin map. Recall that the cycle map  $\rho_{X,n}:CH_{0}(X)/n\rightarrow H^{2d}(X,\mathbb{Z}/n(d))$ is defined by $\rho_{X,n}([x])=G_{x}(1)$. The result now follows from the commutative diagram
\[ \xymatrix{
& H^{2}(X,\mu_{n}) \ar[r]^{\iota_{x}^{\star}}\ar[d]   &   H^{2}(x,\mu_{n})\ar[d]_{\simeq}\\
& Br(X)[n]\ar[r]^{\iota_{x}^{\star}} & Br(k(x))[n]. \\
}
\] Here the two vertical maps arise from the Kummer sequence on $X_{\et}$ and $x_{\et}$ respectively. Notice that the commutativity of the last diagram follows from the functoriality properties of \'{e}tale cohomology (universality of the functor $H^{\star}(X_{\et},\_)$). \\
We can thus conclude that the map $H^{0}(x,\Z/n)\rightarrow H^{2d}(X,\mathbb{Z}/n(d))\rightarrow (H^{2}(X,\mu_{n}))^{\star}$ factors through $H^{0}(x,\Z/n)\rightarrow (Br(X)[n])^{\star}$  and the lemma follows.

\end{proof}
\begin{cor}\label{kernel} The kernel of the map $CH_{0}(X)/n\stackrel{<,>_{X}}{\longrightarrow}(Br(X)[n])^{\star}$ coincides with the kernel of the cycle map $\rho_{X,n}:CH_{0}(X)/n\rightarrow H^{2d}(X,\mathbb{Z}/n(d))$.
\end{cor}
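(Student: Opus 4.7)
The plan is to extract the corollary as a short diagram chase applied to the commutative square of Lemma \ref{ya}, once one identifies the right-hand and bottom maps of that square as isomorphism and injection respectively. In other words, the proof is not a fresh computation but the observation that the cycle map factors through the pairing map via maps with controlled kernels.

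More concretely, first I would recall from the proof of Lemma \ref{ya} that the right vertical map $H^{2d}(X_{\et},\mathbb{Z}/n(d))\to (H^2(X,\mu_n))^{\star}$ is an \emph{isomorphism} (it is induced by the Tate--Poincar\'e duality pairing, which is perfect for smooth projective $X$ over a $p$-adic field). Next I would analyze the bottom map $(Br(X)[n])^{\star}\to (H^2(X,\mu_n))^{\star}$: it is the $\mathbb{Q}/\mathbb{Z}$-dual of the map $H^2(X,\mu_n)\to Br(X)[n]$ coming from the Kummer short exact sequence
\[
0\longrightarrow \mu_n\longrightarrow \mathbb{G}_m\stackrel{n}{\longrightarrow}\mathbb{G}_m\longrightarrow 0
\]
on $X_{\et}$, which yields the four-term exact sequence
\[
0\longrightarrow \mathrm{Pic}(X)/n\longrightarrow H^2(X,\mu_n)\longrightarrow Br(X)[n]\longrightarrow 0.
\]
In particular $H^2(X,\mu_n)\twoheadrightarrow Br(X)[n]$ is surjective, and since $\mathbb{Q}/\mathbb{Z}$ is an injective abelian group, the induced dual map $(Br(X)[n])^{\star}\hookrightarrow (H^2(X,\mu_n))^{\star}$ is injective.

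Having set this up, the conclusion is a direct diagram chase in the commutative square of Lemma \ref{ya}. If $\xi\in CH_0(X)/n$ satisfies $\rho_{X,n}(\xi)=0$, then the image in $(H^2(X,\mu_n))^{\star}$ is zero; by commutativity and injectivity of the bottom arrow, the pairing $\langle\xi,-\rangle_X$ also vanishes. Conversely, if $\langle\xi,-\rangle_X=0$, then the image in $(H^2(X,\mu_n))^{\star}$ again vanishes, and since the right vertical arrow is an isomorphism, $\rho_{X,n}(\xi)=0$. Thus $\ker\rho_{X,n}=\ker\langle\,,\,\rangle_X$ in $CH_0(X)/n$, proving the corollary.

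No step here is expected to be a genuine obstacle: the perfectness of the Tate--Poincar\'e duality pairing is cited through Saito's result in the lemma, and the Kummer sequence identification is classical. The only point one should be careful with is the direction of dualization—ensuring that the surjectivity in the Kummer sequence is what feeds into the injectivity of the bottom horizontal arrow, which is why divisibility of $\mathbb{Q}/\mathbb{Z}$ is used rather than any finiteness assumption.
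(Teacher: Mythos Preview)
Your proposal is correct and matches the paper's own proof essentially verbatim: both deduce the corollary from the commutative square of Lemma~\ref{ya} by observing that the right vertical map is an isomorphism (Tate--Poincar\'e duality) and the bottom horizontal map is injective (dualizing the surjection $H^{2}(X,\mu_{n})\twoheadrightarrow Br(X)[n]$ from the Kummer sequence). There is nothing to add.
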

\begin{proof} This follows immediately from the commutative diagram of lemma \ref{ya}, as soon as we notice that the right vertical map is an isomorphism, and the bottom horizontal map is injective. The injectivity of $(Br(X)/n)^{\star}\rightarrow(H^{2}(X,\mu_{n}))^{\star}$ follows by applying the exact functor $\rm{Hom}(\_,\mathbb{Q}/\Z)$ to the s.e.s.
$$0\rightarrow\rm{Pic}(X)/n\rightarrow H^{2}(X,\mu_{n})\rightarrow Br(X)[n]\rightarrow 0,$$ arising from the Kummer sequence for $X$.

\end{proof}
\medskip
\textbf{\underline{The Hochschild-Serre spectral sequence:}}
We now go back to the case of  an abelian  variety $A$ of dimension $d$ over the $p$-adic field $k$ . We consider the Hochschild-Serre spectral sequence, $$E_{2}^{pq}=H^{p}(k,H^{q}(A_{\overline{k}},\mathcal{F}))\Rightarrow H^{p+q}(A,\mathcal{F}),$$ where $\mathcal{F}$ is any abelian sheaf on $A_{\et}$. For any $q\geq0$, the spectral sequence  gives a descending filtration $$H^{q}(A_{\et},\mathcal{F})=H^{q}_{0}\supset H^{q}_{1}\supset\dots H^{q}_{q-1}\supset H^{q}_{q}\supset 0,$$ with quotients
$H^{q}_{i}/H^{q}_{i+1}\simeq E^{i,q-i}_{\infty}.$ First we observe that $H^{q}_{i}=0,$ for $i\geq 3$. For, the $p$-adic field $k$ has cohomological dimension 2, which forces $E_{2}^{i,q-i}$ to be zero for $i\geq 3$. We will use this filtration for the groups $H^{2d}(A,\Z/n(d))$ and $Br(A)=H^{2}(A,\mathbb{G}_{m})$.\\
\begin{lem}\label{m*} After $\displaystyle\otimes\Z[\frac{1}{2}]$, the spectral sequence $$H_{2}^{pq}=H^{p}(k,H^{q}(A_{\overline{k}},\mathbb{Z}/n(d)))\Rightarrow H^{p+q}(A,\mathbb{Z}/n(d))$$ degenerates at level 2.
\end{lem}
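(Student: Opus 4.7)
The plan is to exploit the functoriality of the Hochschild-Serre spectral sequence under the multiplication-by-$m$ morphism $[m]: A \to A$, and force all potentially nontrivial differentials to vanish after inverting $2$. First, since the $p$-adic field $k$ has cohomological dimension $2$, one has $E_2^{p,q} = H^p(k, H^q(A_{\overline{k}}, \mathbb{Z}/n(d))) = 0$ for $p \geq 3$. Consequently, for $r \geq 3$ the differential $d_r : E_r^{p,q} \to E_r^{p+r, q-r+1}$ has target in column $p+r \geq 3$ and vanishes automatically; for $r = 2$ and $p \geq 1$ the target lies in column $p+2 \geq 3$ and also vanishes. The only differential left to control is therefore $d_2^{0,q} : E_2^{0,q} \to E_2^{2,q-1}$.

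Next, I would compute the action of $[m]^*$ on the coefficient sheaves. Using the canonical isomorphism $H^q(A_{\overline{k}}, \mathbb{Z}/n) \cong \bigwedge^q H^1(A_{\overline{k}}, \mathbb{Z}/n)$ together with the fact that $[m]^*$ acts as multiplication by $m$ on $H^1(A_{\overline{k}}, \mathbb{Z}/n)$ (since $[m]$ acts by $m$ on $A[n]$, and the Tate twist $(d)$ does not affect the geometric action of $[m]$), one obtains that $[m]^*$ acts on $H^q(A_{\overline{k}}, \mathbb{Z}/n(d))$ as multiplication by $m^q$. By naturality of the Leray/Hochschild-Serre spectral sequence for $\pi: A \to \Spec k$ with respect to $[m]$, the induced endomorphism of the spectral sequence acts on $E_2^{p,q}$ by $m^q$ and commutes with every differential.

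Applying this to $d_2^{0,q}$, whose source carries the $[m]^*$-action $m^q$ and whose target carries $m^{q-1}$, the naturality identity $[m]^* \circ d_2^{0,q} = d_2^{0,q} \circ [m]^*$ gives
$$m^{q-1}(m-1)\, d_2^{0,q} = 0$$
as endomorphisms, for every integer $m$. Specializing to $m = 2$ yields $2^{q-1} d_2^{0,q} = 0$, so after $\otimes \mathbb{Z}[\tfrac{1}{2}]$ the differential $d_2^{0,q}$ becomes zero, and the spectral sequence degenerates at $E_2$.

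The main obstacle is the functoriality claim underlying the second paragraph: that the Hochschild-Serre spectral sequence is natural in $A$, and that under the identification $R^q \pi_* (\mathbb{Z}/n(d)) \cong H^q(A_{\overline{k}}, \mathbb{Z}/n(d))$ the endomorphism $[m]^*$ induces on $E_2^{p,q}$ exactly the coefficient action by $m^q$. This is a standard property of the Leray spectral sequence, but it is the one step where some care is needed; once it is in place, the dimensional and combinatorial observations above finish the argument immediately.
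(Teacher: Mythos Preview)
Your proof is correct and follows essentially the same approach as the paper: reduce to $d_2^{0,q}$ via the cohomological dimension of $k$, then exploit the functoriality of the spectral sequence under $[m]^*$ together with the weight-$m^q$ action on $E_2^{p,q}$. The only difference is that the paper specializes to $m=-1$, which gives the slightly sharper conclusion $2\,d_2^{0,q}=0$ (rather than your $2^{q-1}\,d_2^{0,q}=0$ from $m=2$); either suffices once $2$ is inverted.
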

\begin{proof} We need to show that all the differentials $d_{2}^{pq}$ become zero after $\displaystyle\otimes\Z[\frac{1}{2}]$. The statement is clear when $p\geq 1$ or $p<0$ or $q<1$ even before $\displaystyle\otimes\Z[\frac{1}{2}]$. We will show that for $q\geq 1$, the map $$d_{2}^{0,q}:H^{0}(k,H^{q}(\overline{A},\mathbb{Z}/n(d)))\rightarrow H^{2}(k,H^{q-1}(\overline{A},\mathbb{Z}/n(d)))$$ has the property $2d_{2}^{0,q}=0$. Let $m\in\Z$ be an integer. We consider the multiplication by $m$ map $A\stackrel{m}{\longrightarrow} A$ on $A$. The map $m$ induces a pull back map on cohomology,
$$H^{p}(k,H^{q}(A_{\overline{k}},\mathbb{Z}/n(d)))\stackrel{m^{\star}}{\longrightarrow}H^{p}(k,H^{q}(A_{\overline{k}},
\mathbb{Z}/n(d))),$$ for every $p,q$. Moreover, since $m$ is a morphism of schemes, the pullback $m^{\star}$ is compatible with the differentials, i.e. the following diagram commutes, for every $q\geq 1$.
\[ \xymatrix{
& H^{0}(k,H^{q}(A_{\overline{k}},\mathbb{Z}/n(d))) \ar[r]^{m^{\star}}\ar[d]^{d^{0,q}}   &   H^{0}(k,H^{q}(A_{\overline{k}},\mathbb{Z}/n(d)))\ar[d]^{d^{0,q}}\\
& H^{2}(k,H^{q-1}(A_{\overline{k}},\mathbb{Z}/n(d)))\ar[r]^{m^{\star}} & H^{2}(k,H^{q-1}(A_{\overline{k}},\mathbb{Z}/n(d))). \\
}
\]
The action of $m^{\star}$ on $H^{0}(k,H^{q}(A_{\overline{k}},\mathbb{Z}/n(d)))$ is multiplication by $m^{q}$. For, the action is induced by the action of $m^{\star}$ on $H^{1}(A_{\overline{k}},\mathbb{Z}/n)=\Hom(A[n],\Z/n)$, which is multiplication by $m$.
Let $\alpha\in H^{0}(k,H^{q}(\overline{A},\mathbb{Z}/n(d)))$. Taking $m=-1$ and using the fact that  $d^{0,q}$ is a group homomorphism, we get
$$d^{0,q}((-1)^{\star}(\alpha))=d^{0,q}((-1)^{q}\alpha)=(-1)^{q}d^{0,q}(\alpha).$$ On the other hand, using the
commutativity of the diagram above, we obtain
$$d^{0,q}((-1)^{\star}(\alpha))=(-1)^{\star}d^{0,q}(\alpha)=(-1)^{q-1}d^{0,q}(\alpha).$$ We conclude that
$d^{0,q}(\alpha)=-d^{0,q}(\alpha)$ and hence $2d^{0,q}(\alpha)=0$.

\end{proof}
\begin{cor}\label{ss} The filtration $H_{0}^{2d}\supset H_{1}^{2d}\supset H_{2}^{2d}\supset 0$ of the group $H^{2d}(A,\Z/n(d))$ induced by the Hochschild-Serre spectral sequence has successive quotients: $H_{0}^{2d}/H_{1}^{2d}\simeq H^{0}(k,H^{2d}(A,\Z/n(d)))$, $H_{1}^{2d}/H_{2}^{2d}\simeq H^{1}(k,H^{2d-1}(A,\Z/n(d)))$ and $\displaystyle H_{2}^{2d}\otimes\Z[\frac{1}{2}]\simeq  H^{2}(k,H^{2d-2}(A,\Z/n(d)))\otimes\Z[\frac{1}{2}]$.
\end{cor}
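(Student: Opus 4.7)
\medskip

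\noindent\textbf{Proof Proposal.} The plan is to read off the three successive quotients directly from the $E_{2}$ page of the Hochschild--Serre spectral sequence, using Lemma \ref{m*} to kill the only differentials that can interfere with each graded piece.

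First I would recall that for the convergent spectral sequence $E_{2}^{p,q}=H^{p}(k,H^{q}(A_{\overline{k}},\mathbb{Z}/n(d)))\Rightarrow H^{p+q}(A,\mathbb{Z}/n(d))$ the filtration $\{H_{i}^{2d}\}$ satisfies $H_{i}^{2d}/H_{i+1}^{2d}\simeq E_{\infty}^{i,2d-i}$. As recorded immediately before the statement, the vanishing $E_{2}^{p,q}=0$ for $p\geq 3$ (since $k$ has cohomological dimension $2$) already gives $H_{3}^{2d}=0$, so only the three graded pieces $E_{\infty}^{0,2d}$, $E_{\infty}^{1,2d-1}$, $E_{\infty}^{2,2d-2}$ remain to be identified.

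Next I would handle each piece by tracking the differentials that touch it. For the middle piece $E_{\infty}^{1,2d-1}$, every outgoing differential $d_{r}\colon E_{r}^{1,2d-1}\to E_{r}^{1+r,2d-r}$ lands in a column with $p\geq 3$, and every incoming differential $d_{r}\colon E_{r}^{1-r,2d-2+r}\to E_{r}^{1,2d-1}$ starts from a column with $p<0$; both classes vanish for dimensional reasons, giving $E_{\infty}^{1,2d-1}=E_{2}^{1,2d-1}=H^{1}(k,H^{2d-1}(A_{\overline{k}},\mathbb{Z}/n(d)))$ without any need to invert $2$. For $E_{\infty}^{0,2d}$ the only differential that can possibly be nonzero is $d_{2}^{0,2d}\colon E_{2}^{0,2d}\to E_{2}^{2,2d-1}$ (the higher-page analogues land at $p\geq 3$, and incoming differentials start at $p<0$), and for $E_{\infty}^{2,2d-2}$ the only one is the incoming $d_{2}^{0,2d-1}\colon E_{2}^{0,2d-1}\to E_{2}^{2,2d-2}$ (outgoing ones land at $p\geq 4$, higher-page incoming ones start at $p<0$). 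By Lemma \ref{m*} both $d_{2}^{0,2d}$ and $d_{2}^{0,2d-1}$ are annihilated by $2$ and therefore vanish after $\otimes\Z[\frac{1}{2}]$, yielding the isomorphisms $E_{\infty}^{0,2d}\otimes\Z[\frac{1}{2}]\simeq E_{2}^{0,2d}\otimes\Z[\frac{1}{2}]$ and $H_{2}^{2d}\otimes\Z[\frac{1}{2}]=E_{\infty}^{2,2d-2}\otimes\Z[\frac{1}{2}]\simeq E_{2}^{2,2d-2}\otimes\Z[\frac{1}{2}]$ claimed in the statement.

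There is no real obstacle here: the entire argument is spectral-sequence bookkeeping, and the one genuinely nontrivial input, namely the $2$-torsion of the differentials $d_{2}^{0,q}$, has already been isolated in Lemma \ref{m*}. The only minor point worth recording is that the identification of graded pieces commutes with $\otimes\Z[\frac{1}{2}]$, which is immediate from flatness of $\Z[\frac{1}{2}]$ over $\mathbb{Z}$; this is what allows one to interpret the resulting $E_{2}$-terms as the successive quotients of the filtration on $H^{2d}(A,\mathbb{Z}/n(d))\otimes\Z[\frac{1}{2}]$.
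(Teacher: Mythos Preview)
Your argument for $H_{1}^{2d}/H_{2}^{2d}$ and for $H_{2}^{2d}\otimes\Z[\frac{1}{2}]$ is fine and matches the paper. The gap is in the first quotient. The corollary asserts $H_{0}^{2d}/H_{1}^{2d}\simeq H^{0}(k,H^{2d}(A_{\overline{k}},\Z/n(d)))$ \emph{integrally}, without inverting $2$, whereas your use of Lemma~\ref{m*} to kill $d_{2}^{0,2d}$ only gives this after $\otimes\Z[\frac{1}{2}]$. A priori $E_{\infty}^{0,2d}=\ker d_{2}^{0,2d}$ could be a proper subgroup of $E_{2}^{0,2d}$.

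The paper closes this gap by a geometric argument rather than by Lemma~\ref{m*}: Poincar\'{e} duality identifies $E_{2}^{0,2d}=H^{0}(k,H^{2d}(A_{\overline{k}},\Z/n(d)))$ with $\Z/n$, and the composition $CH_{0}(A)/n\stackrel{\rho_{A,n}}{\longrightarrow}H_{0}^{2d}\twoheadrightarrow H_{0}^{2d}/H_{1}^{2d}\hookrightarrow \Z/n$ is the degree map. Since $A$ has a $k$-rational point, the degree map is surjective, and hence so is the inclusion $\ker d_{2}^{0,2d}\hookrightarrow \Z/n$. This is what forces $d_{2}^{0,2d}=0$ on the nose, not merely up to $2$-torsion.
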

\begin{proof} The third equality follows directly from lemma \ref{m*}. We claim that for $p=0,1$, $E_{\infty}^{p,2d-p}=E_{2}^{p,2d-p}$ before $\displaystyle\otimes\Z[\frac{1}{2}]$.
For $p=1$ the statement follows immediately from the observation that both the differentials $d_{2}^{1,2d-1}$ and $d_{2}^{-1,2d}$ are zero.\\
For $p=0$, we first observe that  $E_{\infty}^{0,2d}=E_{3}^{0,2d}=\ker d_{2}^{0,2d}$. For, the map $d_{3}^{0,2d}:E_{3}^{0,2d}\rightarrow E_{3}^{3,2d-3}$ is the zero map, since $E_{3}^{3,2d-3}=0$. Thus, we have an inclusion
$$H_{0}^{2d}/H_{1}^{2d}=\ker d_{2}^{0,2d}\stackrel{j}{\hookrightarrow} E_{2}^{0,2d}=H^{0}(k,H^{2}(A_{\overline{k}},\Z/n(d))).$$
Since $A$ is projective, Poincar\'{e} duality yields an isomorphism
$$H^{2d}(A_{\overline{k}},\Z/n(d))\simeq \rm{Hom}(H^{0}(A_{\overline{k}},\Z/n),\mathbb{Z}/n)\simeq\Z/n.$$
We therefore obtain the following commutative diagram:
\[ \xymatrix{
& & H_{0}^{2d} \ar[d]_{p} & CH_{0}(A)/n\ar[l]^{\rho_{A,n}} \ar[d]^{\deg} \\
&0\ar[r]   & H_{0}^{2d}/H_{1}^{2d} \ar[d]\ar[r]^{j} & \Z/n \ar[d] \\
& &0 & 0
}
\]
Notice that since $A$ is an abelian variety, there exists a $k$-rational point, and hence the degree map is surjective. Since $\deg=j\circ p\circ\rho_{A,n}$, we conclude that the map $j$ is surjective.

\end{proof}
\medskip
\begin{prop}\label{rho} Let $A$ be an abelian variety over $k$ and $n\geq 1$ a positive integer. The cycle map $$\rho_{A,n}:CH_{0}(A)/n\rightarrow H^{2d}(A,\mathbb{Z}/n(d)),$$ when restricted to $F^{3}/n$ is the zero map. Moreover, if $A$ has split multiplicative reduction, then after $\displaystyle\otimes\Z[\frac{1}{2}]$, the kernel of the cycle map is precisely the group $\displaystyle((F^{3}+nCH_{0}(A))/n)\otimes\Z[\frac{1}{2}]$.
\end{prop}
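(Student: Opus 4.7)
The plan is to compare the filtration $F^{\bullet}$ on $CH_{0}(A)/n$ with the Hochschild-Serre filtration $H_{\bullet}^{2d}$ on $H^{2d}(A,\Z/n(d))$ level by level, using Theorem \ref{iso2} to identify $F^{r}/F^{r+1}$ with $S_{r}(k;A)$ for $r=0,1$ and (after $\otimes\Z[\tfrac{1}{2}]$) for $r=2$, and Corollary \ref{ss} to describe the graded pieces of $H_{\bullet}^{2d}$. For the first assertion, the goal is to show $\rho_{A,n}(F^{r})\subset H_{r}^{2d}$ for $r=0,1,2$, so that $\rho_{A,n}(F^{3})\subset H_{3}^{2d}=0$. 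The case $r=1$ follows because the projection to $H_{0}^{2d}/H_{1}^{2d}\simeq\Z/n$ is the degree map (as verified in the proof of Corollary \ref{ss}), which annihilates $F^{1}$. For $r=2$, Poincar\'{e} duality on $A_{\overline{k}}$ gives an identification $H^{2d-1}(A_{\overline{k}},\Z/n(d))\simeq A[n]$ of Galois modules, and the composite $F^{1}/F^{2}\simeq A(k)\to H_{1}^{2d}/H_{2}^{2d}\hookrightarrow H^{1}(k,A[n])$ induced by $\rho_{A,n}$ is the Kummer connecting map, which kills $F^{2}=\ker\alb_{A}$ by Theorem \ref{iso2}. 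Finally, for the third step I need the compatibility that $\rho_{A,n}|_{F^{2}}$, projected to $H_{2}^{2d}$, coincides with $s_{n}\circ\Phi_{2}$ after the Poincar\'{e}-duality identification $H^{2d-2}(A_{\overline{k}},\Z/n(d))\simeq\bigwedge^{2}A[n]$; this is a standard reformulation of the cycle class of a closed point via cup products of Kummer classes, and it forces $F^{3}\subset\ker\Phi_{2}$ to map to zero in $H_{2}^{2d}$.

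For the second assertion, assume $A$ has split multiplicative reduction. The strategy is to verify that the cycle map induces injections on all three graded pieces of $CH_{0}(A)/(F^{3}+nCH_{0}(A))\otimes\Z[\tfrac{1}{2}]$. The injectivities at levels $0$ and $1$, namely $\Z/n\hookrightarrow H^{0}(k,\Z/n)$ via the degree and $A(k)/n\hookrightarrow H^{1}(k,A[n])$ via the Kummer sequence, are elementary and require no hypothesis on reduction. The crucial injectivity at level $2$, namely $(S_{2}(k;A)/n)\otimes\Z[\tfrac{1}{2}]\hookrightarrow H^{2}(k,\bigwedge^{2}A[n])\otimes\Z[\tfrac{1}{2}]$, follows from Yamazaki's theorem (\cite{Yam}), which establishes the injectivity of the Galois symbol $K_{2}(k;A)/n\hookrightarrow H^{2}(k,A[n]^{\otimes 2})$ under split multiplicative reduction; passing to the symmetric quotient $S_{2}$ and inverting $2$ preserves this injectivity. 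After $\otimes\Z[\tfrac{1}{2}]$, Lemma \ref{m*} gives degeneration of the Hochschild-Serre spectral sequence at $E_{2}$, so these graded-piece injections assemble into an injection of the filtered group $CH_{0}(A)/(F^{3}+nCH_{0}(A))\otimes\Z[\tfrac{1}{2}]$ into $H^{2d}(A,\Z/n(d))\otimes\Z[\tfrac{1}{2}]$, yielding the reverse inclusion $\ker(\rho_{A,n})\otimes\Z[\tfrac{1}{2}]\subset((F^{3}+nCH_{0}(A))/n)\otimes\Z[\tfrac{1}{2}]$.

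The main obstacle I anticipate is the level-two compatibility underlying the first assertion: showing that the composite $F^{2}/F^{3}\xrightarrow{\rho_{A,n}} H_{2}^{2d}$ agrees, after $\otimes\Z[\tfrac{1}{2}]$, with $s_{n}\circ\Phi_{2}$. This requires verifying that the cycle class of a closed point $x\in A$, projected onto the $E_{\infty}^{2,2d-2}$-piece, is computed by a cup product of Kummer classes $\delta(x_{i})\in H^{1}(k,A[n])$ under the Poincar\'{e}-duality identification $H^{2d-2}(A_{\overline{k}},\Z/n(d))\simeq\bigwedge^{2}A[n]$. Tracking the Weil-pairing and sign conventions in this identification is the genuinely technical point, and it is also the reason Lemma \ref{m*}'s $\otimes\Z[\tfrac{1}{2}]$ hypothesis ultimately enters the statement of the second assertion.
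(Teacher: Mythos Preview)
Your proposal is correct and follows essentially the same route as the paper: compare the filtration $F^{\bullet}$ with the Hochschild--Serre filtration on $H^{2d}(A,\Z/n(d))$ via Corollary~\ref{ss}, identify the graded pieces at levels $0$ and $1$ with the degree and Kummer maps, and at level~$2$ use the Poincar\'e-duality identification $H^{2d-2}(A_{\overline{k}},\Z/n(d))\simeq\bigwedge^{2}A[n]$ together with the factorisation of $\rho_{A,n}|_{F^{2}}$ through $s_{n}\circ\Phi_{2}$; then invoke Yamazaki's injectivity result for the split multiplicative case. You are right to flag the level-$2$ compatibility as the delicate point---the paper asserts it rather than verifies it in detail---and the passage from $K_{2}(k;A)/n\hookrightarrow H^{2}(k,A[n]^{\otimes 2})$ to the symmetric version is handled in the paper by writing down explicit sections $\{a,b\}\mapsto\tfrac{1}{2}(\{a,b\}+\{b,a\})$ and $x\wedge y\mapsto\tfrac{1}{2}(x\otimes y-y\otimes x)$ after inverting~$2$, which is precisely the mechanism behind your ``preserves this injectivity'' step.
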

\begin{proof} Consider the filtration $H_{0}^{2d}\supset H^{2d}_{1}\supset H^{2d}_{2}\supset 0$ of the group $H^{2d}(A,\Z/n(d))$. Then from the commutative diagram
\[ \xymatrix{
&0 \ar[r] & F^{1}/n \ar[r] \ar[r] & CH_{0}(A)/n\ar[d]^{\rho_{A,n}} \ar[r]   &   \Z/n\ar[d]^{\simeq}\ar[r] &0\\
&0\ar[r]   & H_{1}^{2d} \ar[r] & H^{2d}(A,\Z/n(d))\ar[r] & H_{0}^{2d}/H_{1}^{2d}\ar[r]& 0 \\
}
\] we obtain that $F^{1}/n$ is mapped to $H_{1}^{2d}$ via the cycle map and the kernel of $\rho_{A,n}$ is contained in $F^{1}/n$. Notice that the right vertical map is an isomorphism by lemma \ref{ss}.
Next we consider the commutative diagram
\[ \xymatrix{
& & F^{2}/n \ar[r] \ar[r] & F^{1}/n\ar[d]^{\rho_{A,n}} \ar[r]   &   (F^{1}/F^{2})/n\ar@{^{(}->}[d]\ar[r] &0\\
&0\ar[r]   & H_{2}^{2d} \ar[r] & H_{1}^{2d}\ar[r] & H_{1}^{2d}/H_{2}^{2d}\ar[r]& 0 \\
}
\] from where we obtain that $F^{2}/n$ maps to $H_{2}^{2d}$ via the cycle map and the kernel of $\rho_{A,n}$ is contained in the image of the map $F^{2}/n\to F^{1}/n$. Notice that in this case, the right vertical map is injective. For,  by Poincar\'{e} duality and the \'{e}tale cohomology of abelian varieties over an algebraically closed field, we obtain isomorphisms
\begin{eqnarray*}H^{2d-1}(A_{\overline{k}},\Z/n(d))\simeq H^{1}(A_{\overline{k}},\Z/n)(-1)\simeq
\rm{Hom}(A[n],\Z/n)(-1)\simeq A[n].
\end{eqnarray*} and therefore the map $(F^{1}/F^{2})/n\to H_{1}^{2d}/H_{2}^{2d}$ coincides with the map $A(k)/n\hookrightarrow H^{1}(k,A[n])$ arising from the Kummer sequence for $A$, $0\rightarrow A[n]\rightarrow A\stackrel{n}{\longrightarrow} A\rightarrow 0$. \\
Next we turn our attention to the map $\rho_{A,n}:F^{2}/n\rightarrow H_{2}^{2d}$. Again, by Poincar\'{e} duality we obtain
\begin{eqnarray*}&&H^{2d-2}(A_{\overline{k}},\Z/n(d))\simeq \rm{Hom}(H^{2}(A_{\overline{k}},\Z/n),\Z/n)\simeq\\&&
\rm{Hom}(\wedge^{2}(\rm{Hom}(A[n],\Z/n),\Z/n)\simeq \wedge^{2}A[n].
\end{eqnarray*} Thus, the cycle map induces $F^{2}/n\stackrel{\rho_{A,n}}{\longrightarrow}H^{2}(k,\wedge^{2}A[n])$.\\ Using now the map $s_{n}:(F^{2}/F^{3})/n\rightarrow H^{2}(k,\wedge^{2}A[n])$ obtained in corollary  (\ref{gal}), we deduce that $\rho_{A,n}:F^{2}/n\rightarrow H_{2}^{2d}$ factors through $(F^{2}/F^{3})/n$ and therefore the group $F^{3}/n$, being the kernel of $F^{2}/n\to(F^{2}/F^{3})/n$, is contained in the kernel of the map $\rho_{A,n}$. This concludes the proof of the first statement of the proposition.\\
Assume now that $A$ has split multiplicative reduction. We will prove that the  map
$$(F^{2}/F^{3})/n\otimes\Z[\frac{1}{2}]\rightarrow H^{2}(k,\wedge^{2}A[n])\otimes\Z[\frac{1}{2}]$$ is injective.
By theorem (\ref{Iso}), it suffices to prove that the Somekawa map
$$s_{n}:S_{2}(k;A)/n\otimes\Z[\frac{1}{2}]\rightarrow H^{2}(k,\wedge^{2}A[n])\otimes\Z[\frac{1}{2}]$$ is injective. T.Yamazaki, in \cite{Yam}, proved that in the split multiplicative reduction case, the map $$s_{n}:K_{2}(k;A)/n\rightarrow H^{2}(k,A[n]\otimes A[n])$$ is injective. Consider the following commutative diagram
\[ \xymatrix{
& K_{2}(k;A)/n \ar@{^{(}->}[r]^{s_{n}} \ar[d]   &   H^{2}(k,A[n]\otimes A[n]) \ar[d]\\
&  S_{2}(k;A)/n \ar[r]^{s_{n}} & H^{2}(k,\wedge^{2}A[n]).
}
\]  Notice that after $\displaystyle\otimes\Z[\frac{1}{2}]$ both vertical maps have sections. Namely, the maps
\begin{eqnarray*}&&S_{2}(k;A)/n\otimes\Z[\frac{1}{2}]\stackrel{i}{\rightarrow} K_{2}(k;A)/n\otimes\Z[\frac{1}{2}]\\
&&\{a,b\}_{k'/k}\rightarrow \frac{\{a,b\}_{k'/k}+\{b,a\}_{k'/k}}{2}
\end{eqnarray*}
and \begin{eqnarray*}&&H^{2}(k,\wedge^{2}A[n])\otimes\Z[\frac{1}{2}]\stackrel{j}{\rightarrow} H^{2}(k,
A[n]\otimes A[n])\otimes\Z[\frac{1}{2}],
\end{eqnarray*} induced by the map
\begin{eqnarray*}&&\wedge^{2}A[n]\otimes\Z[\frac{1}{2}]\rightarrow A[n]\otimes A[n]\otimes\Z[\frac{1}{2}]\\
&& x\wedge y\rightarrow \frac{x\otimes y-y\otimes x}{2}.
\end{eqnarray*} The injectivity of the map $$s_{n}:S_{2}(k;A)/n\otimes\Z[\frac{1}{2}]\rightarrow H^{2}(k,\wedge^{2}A[n])\otimes\Z[\frac{1}{2}]$$ hence follows from the commutative diagram
\[ \xymatrix{
&\displaystyle K_{2}(k;A)/n\otimes\Z[\frac{1}{2}] \ar@{^{(}->}[r]^{s_{n}}    & \displaystyle   H^{2}(k,A[n]\otimes A[n])\otimes\Z[\frac{1}{2}] \\
& \displaystyle  S_{2}(k;A)/n\otimes\Z[\frac{1}{2}] \ar@{^{(}->}[u]^{i} \ar[r]^{s_{n}} &\displaystyle  H^{2}(k,\wedge^{2}A[n])\otimes\Z[\frac{1}{2}]\ar@{^{(}->}[u]^{j}.
}
\]

\end{proof}

\begin{thm} Let $A$ be an abelian variety over $k$. The subgroup $F^{3}$ is contained in the kernel of the map
$$j:CH_{0}(A)\rightarrow Br(A)^{\star}.$$ If moreover $A$ has split multiplicative reduction, then the kernel of the map $$CH_{0}(A)\otimes\Z[\frac{1}{2}]\stackrel{j\otimes\mathbb{Z}[\frac{1}{2}]}{\longrightarrow} Br(A)^{\star}\otimes\Z[\frac{1}{2}]$$ is the subgroup $D$ of $\displaystyle F^{2}\otimes\Z[\frac{1}{2}]$, which contains $\displaystyle F^{3}\otimes\Z[\frac{1}{2}]$ and is such that $\displaystyle D/(F^{3}\otimes\Z[\frac{1}{2}])$ is the maximal divisible subgroup of $\displaystyle F^{2}/F^{3}\otimes\Z[\frac{1}{2}]$.
\end{thm}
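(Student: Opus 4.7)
The plan is to derive both assertions from Corollary~\ref{kernel} and Proposition~\ref{rho}, together with the divisibility statement of Corollary~\ref{div}. For the first assertion, $Br(A)$ is a torsion group (as $A$ is smooth projective), so $\ker(j) = \bigcap_n \ker(j_n)$, where $j_n : CH_0(A) \to Br(A)[n]^\star$ is the composition of $j$ with the restriction map to the $n$-torsion dual. Each $j_n$ factors through $CH_0(A)/n$, and by Corollary~\ref{kernel} this induced map has the same kernel as the cycle map $\rho_{A,n}$. Proposition~\ref{rho} (first part) gives $F^3/n \subseteq \ker(\rho_{A,n})$ for every $n$, so $F^3 \subseteq \ker(j_n)$ for every $n$, and therefore $F^3 \subseteq \ker(j)$.

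For the second assertion, assume $A$ has split multiplicative reduction and let $D \subseteq F^2 \otimes \Z[\tfrac{1}{2}]$ be the preimage of the maximal divisible subgroup of $F^2/F^3 \otimes \Z[\tfrac{1}{2}]$ under the natural projection. For the inclusion $D \subseteq \ker(j \otimes \Z[\tfrac{1}{2}])$, take $x \in D$; divisibility of $\bar x$ in $F^2/F^3 \otimes \Z[\tfrac{1}{2}]$ lets us write, for every $n \geq 1$, $x = y + nz$ with $y \in F^3 \otimes \Z[\tfrac{1}{2}]$ and $z \in F^2 \otimes \Z[\tfrac{1}{2}]$. The class of $x$ in $CH_0(A)/n \otimes \Z[\tfrac{1}{2}]$ therefore lies in $(F^3+nCH_0(A))/n \otimes \Z[\tfrac{1}{2}]$, which by the second part of Proposition~\ref{rho} equals $\ker(\rho_{A,n}) \otimes \Z[\tfrac{1}{2}]$ and coincides via Corollary~\ref{kernel} with the kernel of the induced map $CH_0(A)/n \otimes \Z[\tfrac{1}{2}] \to Br(A)[n]^\star \otimes \Z[\tfrac{1}{2}]$. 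Hence $x$ pairs trivially with $Br(A)[n]$ for every $n$, and since $Br(A)$ is torsion we get $x \in \ker(j \otimes \Z[\tfrac{1}{2}])$.

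For the reverse inclusion $\ker(j \otimes \Z[\tfrac{1}{2}]) \subseteq D$, take $x \in \ker(j \otimes \Z[\tfrac{1}{2}])$. Pairing $x$ with the image of $p^\star : Br(k) \hookrightarrow Br(A)$, where $p : A \to \Spec k$ is the structure morphism, gives $\langle x, p^\star \alpha \rangle = \deg(x) \cdot \alpha$ in $\Q/\Z$, forcing $\deg(x) = 0$ after $\otimes \Z[\tfrac{1}{2}]$, so $x \in F^1 \otimes \Z[\tfrac{1}{2}]$. The Leray spectral sequence for $p$ embeds $H^1(k, \widehat A)$ into $Br(A)/Br(k)$, and the induced pairing $A(k) \times H^1(k, \widehat A) \to Br(k) \simeq \Q/\Z$ is the Tate local duality pairing; its perfectness (applicable because $A(k)$ has no nontrivial divisible subgroup in our setting) forces $\alb_A(x) = 0$, placing $x$ in $F^2 \otimes \Z[\tfrac{1}{2}]$. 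Finally, Proposition~\ref{rho} gives $x \in (F^3+nCH_0(A)) \otimes \Z[\tfrac{1}{2}]$ for every $n$, so $\bar x \in F^2/F^3 \otimes \Z[\tfrac{1}{2}]$ is divisible by every integer; writing $F^2/F^3 \otimes \Z[\tfrac{1}{2}] = D' \oplus F'$ via Corollary~\ref{div} and choosing $n$ divisible by the order of the finite summand $F'$ kills its contribution, so $\bar x \in D'$ and $x \in D$. The main obstacle is the step showing $\alb_A(x) = 0$: extracting vanishing of the albanese image from triviality of the Brauer pairing requires the careful identification of $H^1(k,\widehat A)$ as a subgroup of $Br(A)/Br(k)$ via the Leray spectral sequence, combined with Tate local duality for abelian varieties over $p$-adic fields.
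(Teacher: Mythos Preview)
Your proposal is correct and follows essentially the same strategy as the paper: both use Corollary~\ref{kernel} and Proposition~\ref{rho} to reduce the Brauer pairing to the cycle map, then peel off the filtration layers $F^0\supset F^1\supset F^2$ via the Hochschild--Serre/Leray spectral sequence for $Br(A)$ together with Tate local duality, and finally invoke Corollary~\ref{div} and the injectivity of $(F^2/F^3)/n\otimes\Z[\tfrac12]\to H^2(k,\wedge^2 A[n])\otimes\Z[\tfrac12]$ for the last step.

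One small point worth tightening: the spectral sequence identifies the relevant graded piece of $Br(A)$ with $H^1(k,\mathrm{Pic}(A_{\overline{k}}))$, not directly with $H^1(k,\widehat{A})=H^1(k,\mathrm{Pic}^0(A_{\overline{k}}))$, and the quotient you are modding out by is $E_\infty^{2,0}$, a quotient of $Br(k)$, rather than $Br(k)$ itself. The paper handles this by dualizing the filtration and checking that the composite $A(k)\to (H^1(k,\mathrm{Pic}(A_{\overline{k}})))^\star\to (H^1(k,\mathrm{Pic}^0(A_{\overline{k}})))^\star$ is the injection coming from Tate duality; your sketch should be read the same way. With that adjustment, your argument and the paper's are the same.
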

\begin{proof} Assume to contradiction that $F^{3}\varsubsetneq\ker j$ and let $w\in F^{3}$ be such that $j(w)\neq 0$. This means that there exists some element $\alpha\in Br(A)$ such that $<w,\alpha>\neq 0$. Notice that the group $Br(A)$ is torsion, because it is a subgroup of $Br(K)$, where $K$ is the function field of $A$ (for a proof of the last statement see \cite{Grot}, II, 1.10). Let $m$ be the order of $\alpha$. Then $j(w)$ gives a nonzero morphism $Br(A)[m]\rightarrow\mathbb{Q}/\Z$.
The map $F^{3}\to Br(A)[m]^{\star}$ factors through $F^{3}/m$ and by proposition (\ref{rho}) , we get a commutative diagram  \[ \xymatrix{
& F^{3}/m \ar[d] \ar[r]^{0}   &   H^{2d}(X_{\et},\mathbb{Z}/n(d))\ar[d]\\
& Br(X)[m]^{\star}\ar[r] & (H^{2}(X,\mu_{n}))^{\star}. \\
}
\]
Since the bottom map is injective, we conclude that the map $F^{3}/m\to Br(A)[m]^{\star}$ is zero, which is the desired contradiction. \\ Next, proposition \ref{div} gives us an isomorphism $$F^{2}/F^{3}\otimes\Z[\frac{1}{2}]\simeq D_{0}\oplus F_{0},$$ where $F_{0}$ is a finite group and $D_{0}$ is divisible. Let $D$ be the subgroup of $\displaystyle F^{2}\otimes\Z[\frac{1}{2}]$ such that $\displaystyle D/(F^{3}\otimes\Z[\frac{1}{2}])\simeq D_{0}$. It is clear that $\displaystyle D\subset\ker(CH_{0}(A)\otimes\Z[\frac{1}{2}]\rightarrow Br(A)^{\star}\otimes\Z[\frac{1}{2}])$, since $Br(A)$ is a torsion group. \\
Assume now that $A$ has split multiplicative reduction. We will show that $D$ is in fact equal to $\ker(j\otimes\frac{1}{2})$. First, we consider the filtration $H_{0}^{2}\supset H_{1}^{2}\supset H_{2}^{2}\supset 0$ of $Br(A)$ arising from the Hochschild-Serre spectral sequence, $H^{p}(k,H^{q}(\overline{A},\mathbb{G}_{m}))\Rightarrow H^{p+q}(A,\mathbb{G}_{m})$.\\
We can easily see that $E_{\infty}^{1,1}=E_{2}^{1,1}$. For, both the differentials  $d_{2}^{1,1}$ and $d_{2}^{-1,2}$ are zero. This yields an isomorphism
$H_{1}^{2}/H_{2}^{2}\simeq H^{1}(k,H^{1}(A_{\overline{k}},\mathbb{G}_{m}))$. Next we observe that
$E_{\infty}^{2,0}=E_{3}^{2,0}=E_{2}^{2,0}/\img(E_{2}^{0,1}\rightarrow E_{2}^{2,0})$. For, both the differentials $d_{3}^{2,0}$ and $d_{3}^{-1,2}$ are zero. In particular, we have a surjection  $E_{2}^{2,0}\rightarrow H_{2}^{2}\rightarrow 0$. Dualizing, we obtain an inclusion $0\rightarrow (H_{2}^{2})^{\star}\rightarrow(E_{2}^{2,0})^{\star}.$ Since $A$ is proper, we have an isomorphism $$E_{2}^{2,0}\simeq H^{2}(k,H^{2}(A_{\overline{k}},\mathbb{G}_{m}))\simeq Br(k)\simeq \mathbb{Q}/\Z.$$
We have a commutative diagram as follows:
\[ \xymatrix{
&0 \ar[r] & F^{1}\ar[r]  & CH_{0}(A)\ar[d]^{j} \ar[r]^{\deg}   &   \Z\ar[d]\ar[r] &0\\
&0\ar[r]   & (H_{0}^{2}/H_{2}^{2})^{\star} \ar[r] & (H_{0}^{2})^{\star}\ar[r] & (H_{2}^{2})^{\star}\ar[r]& 0 \\
}
\]  We claim that the right vertical map is an inclusion. To see this, we observe that the composition
$$\Z\rightarrow(H_{2}^{2})^{\star}\hookrightarrow(E_{2}^{2,0})^{\star}$$  coincides with the inclusion $\Z\hookrightarrow\widehat{\Z}=(Br(k))^{\star}$. (We note here that T.Yamazaki is using this exact same argument for the injectivity in the proof of his proposition 3.1 in \cite{Yam}). We conclude that $\ker j\subset F^{1}$. Moreover, under this map, $F^{1}$ is sent to the subgroup $(H_{0}^{2}/H_{2}^{2})^{\star}$. Next we consider the commutative diagram
\[ \xymatrix{
&0 \ar[r] & F^{2}\ar[r]  & F^{1}\ar[d]^{j} \ar[r]^{\alb_{A}}   &   A(k)\ar[d]\ar[r] &0\\
&0\ar[r]   & (H_{0}^{2}/H_{1}^{2})^{\star} \ar[r] & (H_{0}^{2}/H_{2}^{2})^{\star}\ar[r] & (H_{1}^{2}/H_{2}^{2})^{\star}\ar[r]& 0. \\
}
\] We claim that the right vertical map is again an injection. For, we have an isomorphism $H_{1}^{2}/H_{2}^{2}\simeq E_{2}^{1,1}\simeq H^{1}(k,\rm{Pic}(A_{\overline{k}}))$. Moreover, Tate duality yields an isomorphism
$$A(k)^{\star}\simeq H^{1}(k,\widehat{A})=H^{1}(k,\rm{Pic}^{0}(A_{\overline{k}})).$$ (see \cite{Mil2}). Applying $\rm{Hom}(\_,\mathbb{Q}/\Z)$, we obtain an injection $$A(k)\hookrightarrow (H^{1}(k,\rm{Pic}^{0}(A_{\overline{k}})))^{\star}.$$ We conclude, that since the composition $$A(k)\rightarrow (H^{1}(k,\rm{Pic}(A_{\overline{k}})))^{\star}\rightarrow(H^{1}(k,\rm{Pic}^{0}(A_{\overline{k}})))^{\star}$$ is injective, the first map needs to be injective as well. This yields an inclusion $\ker j\subset F^{2}$, and therefore $\displaystyle\ker ( j\otimes\Z[\frac{1}{2}])\subset F^{2}\otimes\Z[\frac{1}{2}]$. Next, notice that the map $\displaystyle j\otimes\Z[\frac{1}{2}]$ induces $$(F^{2}\otimes\Z[\frac{1}{2}])/D\stackrel{j\otimes\Z[\frac{1}{2}]}{\longrightarrow} Br(A)^{\star}\otimes\Z[\frac{1}{2}].$$ To see that this last map is injective, let $n$ be the order of $\displaystyle(F^{2}\otimes\Z[\frac{1}{2}])/D=F_{0}$. Since $D$ is divisible, we have an equality $$\frac{(F^{2}/F^{3})\otimes\Z[\frac{1}{2}]}{n}=F_{0}.$$ Since the kernel of the map $$j\otimes\Z[\frac{1}{2}]:\frac{(F^{2}/F^{3})\otimes\Z[\frac{1}{2}]}{n}\rightarrow Br(A)[n]^{\star}\otimes\Z[\frac{1}{2}]$$  coincides with the kernel of the cycle map $$\rho_{A,n}\otimes\Z[\frac{1}{2}]:\frac{(F^{2}/F^{3})\otimes\Z[\frac{1}{2}]}{n}\rightarrow H^{2d}(A,\Z/n(d))\Z[\frac{1}{2}]$$
(corollary (\ref{kernel})), the result follows by the second part of proposition (\ref{rho}).

\end{proof}
\medskip
\begin{rem} We conjecture that if the abelian variety $A$ has semi-ordinary reduction, the group $F^{3}$ is divisible. This would mean that in the special case of split multiplicative reduction, the cycle map $\displaystyle\rho_{A,n}:CH_{0}(A)/n\otimes\Z[\frac{1}{2}]\rightarrow H^{2d}(A,\Z/n(d))\otimes\Z[\frac{1}{2}]$ is injective and the kernel of $$j:CH_{0}(A)\otimes\Z[\frac{1}{2}]\rightarrow Br(A)^{\star}\otimes\Z[\frac{1}{2}]$$ is the maximal divisible subgroup of $\displaystyle CH_{0}(A)\otimes\Z[\frac{1}{2}]$.
\end{rem}

\vspace{20pt}
\textbf{Acknowledgement:} I would like to express my great gratitude to my advisor, Pr. Kazuya Kato, who very kindly adviced me and provided me with multiple hints in order to obtain the above paper. He also pointed out many mistakes and gave me a very helpful feedback during the writing procedure. Moreover, I am really grateful for the discussion I had with Pr. Alexander Beilinson regarding the Fourier Mukai transform. Finally, I would like to thank Pr. Spencer Bloch and Pr. Madhav Nori for their interest in my work as well as my referee whose comments and suggestions improved greatly this paper.
\vspace{20pt}

{99}

\medskip
\medskip
\medskip
Department of Mathematics,\\ University of Chicago, 5734 University Ave.\\ Chicago, Illinois 60637\\
email: valiagaz@math.uchicago.edu

\end{document}